\documentclass[a4paper,11pt,reqno]{amsart}
\usepackage{amssymb,amsmath,mathrsfs,graphics,graphicx,mathptm,float,lscape,latexsym,pstricks}
\usepackage[T1]{fontenc}
\usepackage[english,francais]{babel}
\usepackage[all]{xy}
\setcounter{tocdepth}{1}

\theoremstyle{plain}
\newtheorem{thm}{Theorem}[section]
\newtheorem{nota}[thm]{Notations}
\newtheorem{pro}[thm]{Proposition}
\newtheorem{lem}[thm]{Lemma}
\newtheorem{cor}[thm]{Corollary}
\newtheorem{theoalph}{Theorem}

\newtheorem{proalph}[theoalph]{Proposition}

\theoremstyle{definition}
\newtheorem*{defi}{Definition}

\newtheorem{eg}[thm]{Example}

\newtheorem{rem}[thm]{Remark}

\def\og{\leavevmode\raise.3ex\hbox{$\scriptscriptstyle\langle\!\langle$~}}
\def\fg{\leavevmode\raise.3ex\hbox{~$\!\scriptscriptstyle\,\rangle\!\rangle$}}

\setlength{\textwidth}{15cm} \setlength{\textheight}{20cm}
\setlength{\topmargin}{0cm} \setlength{\headheight}{0.59cm}
\setlength{\headsep}{1.5cm} \setlength{\oddsidemargin}{0.5cm}
\setlength{\evensidemargin}{0.5cm} \marginparwidth 1.9cm
\marginparsep 0.4cm \marginparpush 0.4cm \footskip 2.2cm
\setlength{\baselineskip}{0.8cm}

\addtocounter{section}{0}             
\numberwithin{equation}{section}       

\usepackage[colorlinks, linktocpage, citecolor = blue, linkcolor = blue]{hyperref}

\begin{document}
\selectlanguage{english}
\title{On cubic birational maps of $\mathbb{P}^3_\mathbb{C}$}
\thanks{First author partially supported by ANR Grant "BirPol"  ANR-11-JS01-004-01.}

\author{Julie \textsc{D\'eserti}}
\email{julie.deserti@imj-prg.fr}

\author{Fr\'ed\'eric \textsc{Han}}
\email{frederic.han@imj-prg.fr}

\address{Univ Paris Diderot, Sorbonne Paris Cit\'e, Institut de Math\'ematiques de Jussieu-Paris Rive Gauche, UMR $7586$, CNRS, Sorbonne Universit\'es, UPMC Univ Paris $06$, F-$75013$ Paris, France.}

\maketitle

\begin{abstract}
We study the birational maps of $\mathbb{P}^3_\mathbb{C}$. More precisely we describe the irreducible components of the set of birational maps of bidegree $(3,3)$ (resp. $(3,4)$, resp. $(3,5)$). 
\end{abstract}

\section{Introduction}\label{Sec:intro}

The \textsc{Cremona} group, denoted $\mathrm{Bir}(\mathbb{P}^n_\mathbb{C})$, is the group of birational maps of $\mathbb{P}^n_\mathbb{C}$ into itself. If $n=2$ a lot of properties have been established (\emph{see} \cite{Cantat1, Deserti} for example). As far as we know the situation is much more different for $n\geq 3$ (\emph{see} \cite{Pan3, Cantat2} for example). If $\psi$ is an element of $\mathrm{Bir}(\mathbb{P}^2_\mathbb{C})$ then $\deg\psi=\deg\psi^{-1}$. It is not the case in higher dimensions; if $\psi$ belongs to $\mathrm{Bir}(\mathbb{P}^3_\mathbb{C})$ we only have the inequality $\deg\psi^{-1}\leq(\deg\psi)^2$ so one introduces the bidegree of $\psi$ as the pair $(\deg\psi,\deg\psi^{-1})$. For $n=2$, $\mathfrak{Bir}_d(\mathbb{P}^2_\mathbb{C})$ is the set of birational maps of the complex projective plane of degree $d$; for $n\geq 3$ denote by $\mathrm{Bir}_{d,d'}(\mathbb{P}^n_\mathbb{C})$ the set of elements of $\mathrm{Bir}(\mathbb{P}^n_\mathbb{C})$ of bidegree $(d,d')$, and by $\mathfrak{Bir}_d(\mathbb{P}^n_\mathbb{C})$ the union $\cup_{d'}\mathrm{Bir}_{d,d'}(\mathbb{P}^n_\mathbb{C})$.

In \cite{CerveauDeserti} the sets $\mathfrak{Bir}_2(\mathbb{P}^2_\mathbb{C})$, and $\mathfrak{Bir}_3(\mathbb{P}^2_\mathbb{C})$ are described: $\mathfrak{Bir}_2(\mathbb{P}^2_\mathbb{C})$ is smooth, and irreducible in the space of quadratic rational maps of the complex projective plane whereas $\mathfrak{Bir}_3(\mathbb{P}^2_\mathbb{C})$ is irreducible, and rationnally connected. Besides, $\mathfrak{Bir}_d(\mathbb{P}^2_\mathbb{C})$ is not irreducible as soon as $d>3$ (\emph{see}~\cite{BisiCalabriMella}). In \cite{Cremona} \textsc{Cremona} studies three types of generic elements of $\mathfrak{Bir}_2(\mathbb{P}^3_\mathbb{C})$. Then there were some articles on the subject, and finally a precise description of $\mathfrak{Bir}_2(\mathbb{P}^3_\mathbb{C})$; the left-right conjugacy is the following one
\[
\mathrm{PGL}(4;\mathbb{C})\times\mathrm{Bir}(\mathbb{P}^3_\mathbb{C})\times\mathrm{PGL}(4;\mathbb{C})\to\mathrm{Bir}(\mathbb{P}^3_\mathbb{C}),\quad\quad (A,\psi,B)\mapsto A\psi B^{-1}.
\]
\textsc{Pan}, \textsc{Ronga} and \textsc{Vust} give quadratic birational maps of $\mathbb{P}^3_\mathbb{C}$ up to left-right conjugacy, and show that there are only finitely many biclasses (\cite[Theorems 3.1.1, 3.2.1, 3.2.2, 3.3.1]{PanRongaVust}). In particular they show that $\mathfrak{Bir}_2(\mathbb{P}^3_\mathbb{C})$ has three irreducible components of dimension $26$, $28$, $29$; the component of dimension $26$ (resp. $28$, resp.~$29$) corresponds to birational maps of bidegree $(2,4)$ (resp. $(2,3)$, resp. $(2,2)$). We will see that the situation is slightly different for $\mathfrak{Bir}_3(\mathbb{P}^3_\mathbb{C})$; in particular we cannot expect such an explicit list of biclasses because there are infinitely many of biclasses (already the dimension of the family $\mathcal{E}_2$ of the classic cubo-cubic example is $39$ that is strictly larger that $\dim(\mathrm{PGL}(4;\mathbb{C})\times\mathrm{PGL}(4;\mathbb{C}))=30$). That's why the approach is different. 

\medskip

We do not have such a precise description of $\mathfrak{Bir}_d(\mathbb{P}^3_\mathbb{C})$ for $d\geq 4$. Nevertheless we can find a very fine and classical contribution for $\mathfrak{Bir}_3(\mathbb{P}^3_\mathbb{C})$ due to \textsc{Hudson} (\cite{Hudson}); in \S\ref{Sec:hudsontable} we reproduce Table~VI of \cite{Hudson}. \textsc{Hudson} introduces there some invariants to establish her classification. But it gives rise to many cases, and we also find examples where invariants take values that do not appear in her table. We do not know references explaining how her families fall into irreducible components of $\mathrm{Bir}_{3,d}(\mathbb{P}^3_\mathbb{C})$ so we focus on this natural question.

\begin{defi}
An element $\psi$ of $\mathrm{Bir}_{3,d}(\mathbb{P}^3_\mathbb{C})$ is \textbf{\textit{ruled}} if the strict transform of a generic plane under~$\psi^{-1}$ is a ruled cubic surface. 
\end{defi}

Denote by $\mathfrak{ruled}_{3,d}$ the set of $(3,d)$ ruled maps. Let us remark that there are no ruled birational maps of bidegree $(3,d)$ with $d\geq 6$. We detail $\mathfrak{ruled}_{3,d}$ in Lemma \ref{Lem:ruled}.

We describe the irreducible components of $\mathrm{Bir}_{3,d}(\mathbb{P}^3_\mathbb{C})$ for $3\leq d\leq 5$. Let us recall that the inverse of an element of $\mathrm{Bir}_{3,2}(\mathbb{P}^3_\mathbb{C})$ is quadratic and treated in \cite{PanRongaVust}.

\begin{theoalph}\label{thmA}
Assume that $2\leq d\leq 5$. The set $\mathfrak{ruled}_{3,d}$ is an irreducible component of $\mathrm{Bir}_{3,d}(\mathbb{P}^3_\mathbb{C})$. 

In bidegree $(3,3)$ $($resp. $(3,4)$$)$ there is only an other irreducible component; in bidegree $(3,5)$ there are three others.

The set $\mathfrak{ruled}_{3,3}$ intersects the closure of any irreducible component of $\overline{\mathrm{Bir}_{3,4}(\mathbb{P}^3_\mathbb{C})}$ $($the closures being taken in $\mathfrak{Bir}_3(\mathbb{P}^3_\mathbb{C}))$.
\end{theoalph}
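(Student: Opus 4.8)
My plan is to use the classification already established in the theorem to reduce the claim to two explicit degenerations, and then to realise a ruled $(3,3)$ map as the limit of each $(3,4)$ family.

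By the second assertion of the theorem, $\mathrm{Bir}_{3,4}(\mathbb{P}^3_\mathbb{C})$ has exactly two irreducible components: $\mathfrak{ruled}_{3,4}$ and one further, non-ruled component, which I denote $\mathcal{N}$. Since the closure of an irreducible set is irreducible, the irreducible components of $\overline{\mathrm{Bir}_{3,4}(\mathbb{P}^3_\mathbb{C})}$ (closure taken in $\mathfrak{Bir}_3(\mathbb{P}^3_\mathbb{C})$) are precisely $\overline{\mathfrak{ruled}_{3,4}}$ and $\overline{\mathcal{N}}$. It is therefore enough to exhibit, inside each of these two closures, an element of $\mathfrak{ruled}_{3,3}$. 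In both cases I would do this by writing an explicit one-parameter family $t\mapsto\psi_t$ of cubic maps, given by a quadruple of cubic forms depending polynomially on $t$, with $\psi_t$ a birational map of bidegree $(3,4)$ in the prescribed component for $t\neq 0$ and $\psi_0\in\mathfrak{ruled}_{3,3}$; this places $\psi_0$ in the closure of that component.

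For $\mathfrak{ruled}_{3,4}$ the degeneration can be kept entirely inside the ruled locus. Starting from the normal form for ruled maps supplied by Lemma \ref{Lem:ruled}, I would let the base locus move to the special configuration that lowers the inverse degree from $4$ to $3$ while the strict transform of a generic plane remains a ruled cubic surface throughout; since Lemma \ref{Lem:ruled} describes $\mathfrak{ruled}_{3,d}$ uniformly in $d$, this amounts to a specialisation of the parameters of the ruled family, and $\psi_0$ is visibly ruled. The serious case is $\mathcal{N}$: here I would take a generic explicit representative of the non-ruled $(3,4)$ component and specialise its coefficients so that at $t=0$ the linear system of cubics acquires the structure forcing the strict transform of a generic plane to carry a ruling of lines. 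Producing that ruling simultaneously pulls the inverse degree down to $3$, so that $\psi_0$ lands in $\mathfrak{ruled}_{3,3}$.

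The verification is where I expect the real difficulty, and it is twofold. First, for each family I must check that $\psi_t$ stays dominant through $t=0$, so that $\psi_0$ is a genuine element of $\mathfrak{Bir}_3(\mathbb{P}^3_\mathbb{C})$ and not a map with smaller image; this is a non-degeneracy condition on the limiting quadruple of cubics. Second, and most delicately, I must show that $\deg\psi_t^{-1}=4$ for generic $t$ but drops to \emph{exactly} $3$ at $t=0$ (and not lower), together with the ruled property of the limit. Controlling this jump requires tracking how the base scheme of $\psi_t$ and the invariants computing $\deg\psi_t^{-1}$ behave in the limit, and then matching $\psi_0$ against the explicit list of ruled cubic surfaces underlying Lemma \ref{Lem:ruled}. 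The family degenerating $\mathcal{N}$ is the hard instance, since there a single specialisation must produce both the correct value of the inverse degree and the onset of ruledness at once.
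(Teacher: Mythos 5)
Your proposal engages only the third assertion of the theorem: the first two (that $\mathfrak{ruled}_{3,d}$ is an irreducible component, and the count of the remaining components in bidegrees $(3,3)$, $(3,4)$, $(3,5)$) are invoked, not proven. In the paper these carry most of the weight: Proposition \ref{Pro:compirr} proves the first via the behaviour of $\deg\mathcal{I}_\psi$ under specialization (non-ruled maps have $\deg\mathcal{I}_\psi=9-d$, ruled ones $\deg\mathcal{I}_\psi<9-d$, and this number cannot decrease by specialization within a fixed bidegree), while Theorems \ref{thm:comp33}, \ref{thm:comp34} and \ref{thm:comp35} prove the second through the liaison-theoretic classification and explicit degenerations (e.g.\ the matrix family $\det(A_i+tB_i)$ giving $\overline{\mathcal{E}_3}\subset\overline{\mathcal{E}_2}$). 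Since the statement under proof is all of Theorem \ref{thmA}, your appeal to ``the second assertion of the theorem'' is circular as written; at best your text is a conditional proof of Proposition \ref{Pro:inter}.

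For that third assertion your two-pronged strategy does match the paper's architecture: inside the ruled locus the specialization is exactly Lemma \ref{Lem:inclruled} (move two base points of the normal form of Lemma \ref{Lem:ruled} until the line through them meets the double line $\delta$), and the reduction to the single non-ruled component is precisely how the proof of Proposition \ref{Pro:inter} begins. But for the non-ruled component you stop at a statement of intent: no family $\psi_t$ is produced, and the two verifications you yourself flag --- birationality of the limit, and the inverse degree dropping to exactly $3$ with the limit ruled --- are the entire content of the claim, not routine checks. A priori the limit of such a family could acquire a fixed component, fail to be birational, or land in the wrong bidegree, and nothing in your outline excludes this; note also that, by Proposition \ref{Pro:compirr}, the drop in inverse degree is \emph{forced} (no specialization from non-ruled to ruled exists within bidegree $(3,4)$), so the construction must thread a genuinely narrow needle. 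The paper closes this gap with one concrete choice: take $\mathcal{C}_2$ the union of four lines with $\mathcal{J}_\varepsilon=(z_0z_1,\,z_0^2z_2+\varepsilon z_0z_2^2,\,z_1^2z_3)$, set $\mathcal{I}_\varepsilon=z_0z_1(z_0,z_1,z_2)+(z_0^2z_2+\varepsilon z_0z_2^2,\,z_1^2z_3)$ and add a general base point $p_2$; one then checks directly that $\psi_\varepsilon$ is a non-ruled $(3,4)$ map for $\varepsilon\neq 0$, while at $\varepsilon=0$ every generator lies in $(z_0,z_1)^2$, so $\Lambda_{\psi_0}$ is singular along a line and $\psi_0\in\mathfrak{ruled}_{3,3}$. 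Until you exhibit such a family and carry out these checks, the hard half of the assertion remains unproven.
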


\begin{nota}\label{nota:C2}
Consider a dominant rational map $\psi$ from $\mathbb{P}^3_\mathbb{C}$ into itself. For a generic line $\ell$, the preimage of $\ell$ by $\psi$ is a complete intersection $\Gamma_\ell$; let $\mathcal{C}_2$ be the union of the irreducible components of $\Gamma_\ell$ supported in the base locus of $\psi$. Define $\mathcal{C}_1$ by liaison from $\mathcal{C}_2$ in $\Gamma_\ell$. Remark that if $\psi$ is birational, then $\mathcal{C}_1=\psi^{-1}_*(\ell)$. Let us denote by $\mathfrak{p}_a(\mathcal{C}_i)$ the arithmetic genus of $\mathcal{C}_i$.
\end{nota}

It is difficult to find a uniform approach to classify elements of $\mathfrak{Bir}_3(\mathbb{P}^3_\mathbb{C})$. Nevertheless in small genus we succeed to obtain some common detailed results; before stating them, let us introduce some notations.

Let us remark that the inequality $\deg \psi^{-1}\leq(\deg\psi)^2$ mentioned previously directly follows from 
\[
(\deg \psi)^2=\deg\psi^{-1}+\deg \mathcal{C}_2.
\]

\begin{proalph}\label{propB}
Let $\psi$ be a $(3,d)$ birational map.

Assume that $\psi$ is not ruled, and $\mathfrak{p}_a(\mathcal{C}_1)=0$, {\it i.e. $\mathcal{C}_1$ is smooth}. Then
\begin{itemize}
\item[$\bullet$] $d\leq 6$; 

\item[$\bullet$] and $\mathcal{C}_2$ is a curve of degree $9-d$, and arithmetic genus $9-2d$.
\end{itemize}

Suppose $\mathfrak{p}_a(\mathcal{C}_1)=1$, and $2\leq d\leq 6$. Then
\begin{itemize}
\item[$\bullet$] there exists a singular point $p$ of $\mathcal{C}_1$ independent of the choice of $\mathcal{C}_1$;

\item[$\bullet$] if $d\leq 4$, all the cubic surfaces of the linear system $\Lambda_\psi$ are singular at $p$; 

\item[$\bullet$] the curve $\mathcal{C}_2$ is of degree $9-d$, of arithmetic genus $10-2d$, and lies on a unique quadric $Q$; more precisely $\mathcal{I}_{\mathcal{C}_2}=(Q,\mathcal{S}_1,\ldots,\mathcal{S}_{d-2})$ where the $\mathcal{S}_i$'s are independent cubics mo\-dulo $Q$.
\end{itemize}
\end{proalph}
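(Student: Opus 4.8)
The plan is to handle both curves through the liaison they inherit from $\Gamma_\ell$, and to extract the finer structure of $\mathcal{C}_2$ from a general member of $\Lambda_\psi$. Since $\deg\psi=3$, the scheme $\Gamma_\ell$ is the complete intersection of two cubics, so $\mathcal{C}_1$ and $\mathcal{C}_2$ are linked by a $(3,3)$ complete intersection. As $\mathcal{C}_1=\psi^{-1}_*(\ell)$ is the image of a general line under the degree-$d$ map $\psi^{-1}$, it is a rational curve of degree $d$, whence $\deg\mathcal{C}_2=9-d$ from $\deg\mathcal{C}_1+\deg\mathcal{C}_2=9$. The liaison genus formula for a $(3,3)$ linkage reads $\mathfrak{p}_a(\mathcal{C}_2)-\mathfrak{p}_a(\mathcal{C}_1)=\deg\mathcal{C}_2-\deg\mathcal{C}_1=9-2d$, which gives $\mathfrak{p}_a(\mathcal{C}_2)=9-2d$ when $\mathfrak{p}_a(\mathcal{C}_1)=0$ and $\mathfrak{p}_a(\mathcal{C}_2)=10-2d$ when $\mathfrak{p}_a(\mathcal{C}_1)=1$; these are exactly the asserted genera. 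In the smooth case the bound $d\le 6$ is then forced numerically: the genus formula already excludes the largest $d$ (for instance $d=8$ would make $\mathcal{C}_2$ a line of arithmetic genus $-7$), and the remaining borderline values are eliminated using that $\mathcal{C}_2$ is a genuine one-dimensional curve contained in \emph{every} cubic of $\Lambda_\psi$, i.e. $h^0(\mathcal{I}_{\mathcal{C}_2}(3))\ge 4$ together with $\deg\mathcal{C}_2\ge 1$.

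For the genus-one case the first task is to pin down the singular point. As $\mathcal{C}_1$ is rational with $\mathfrak{p}_a(\mathcal{C}_1)=1$, it carries a single node or cusp. Two points of $\ell$ can have the same image $q$ in the source only if $q$ is a point of indeterminacy of $\psi$; equivalently the singularity of $\mathcal{C}_1=\psi^{-1}_*(\ell)$ must sit at a point $p$ to which $\psi^{-1}$ contracts a surface. A general line meets a contracted surface of degree two in two points, both collapsing to $p$, which produces the singularity and, decisively, makes $p$ independent of $\ell$. For $d\le 4$ I would then translate ``$\psi^{-1}$ contracts a surface to $p$'' into ``$p$ is a base point of $\Lambda_\psi$ of multiplicity two'': on the blow-up resolving $\psi$ the cubics $\psi^*H$ acquire a double point at $p$, so every surface of $\Lambda_\psi$ is singular there. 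The restriction to $d\le 4$ reflects the degree--multiplicity balance of the homaloidal system, since a double base point is incompatible with larger inverse degree.

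The structure of $\mathcal{C}_2$ is cleanest on a general cubic surface $S\in\Lambda_\psi$, which is normal (in particular not ruled) because $\psi$ is not ruled. On $S$ one has $\mathcal{C}_1+\mathcal{C}_2\sim 3H$ and $K_S=-H$, so adjunction gives $\mathcal{C}_1^2=d$ and $\mathcal{C}_2\sim 3H-\mathcal{C}_1$. Quadrics through $\mathcal{C}_2$ correspond on $S$ to sections of $\mathcal{I}_{\mathcal{C}_2/S}(2)=\mathcal{O}_S(\mathcal{C}_1-H)$, and Riemann--Roch gives
\[
\chi\bigl(\mathcal{O}_S(\mathcal{C}_1-H)\bigr)=1+\tfrac12(\mathcal{C}_1-H)\cdot\mathcal{C}_1=1+\tfrac12(\mathcal{C}_1^2-H\cdot\mathcal{C}_1)=1 ,
\]
so, once the higher cohomology of $\mathcal{O}_S(\mathcal{C}_1-H)$ vanishes, and using $0\to\mathcal{O}_{\mathbb{P}^3}(-1)\to\mathcal{I}_{\mathcal{C}_2}(2)\to\mathcal{I}_{\mathcal{C}_2/S}(2)\to 0$, the curve $\mathcal{C}_2$ lies on a unique quadric $Q$. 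The same bookkeeping at level three, via $0\to\mathcal{O}_{\mathbb{P}^3}(n-3)\to\mathcal{I}_{\mathcal{C}_2}(n)\to\mathcal{I}_{\mathcal{C}_2/S}(n)\to 0$ and $\mathcal{I}_{\mathcal{C}_2/S}(3)=\mathcal{O}_S(\mathcal{C}_1)$ with $\chi(\mathcal{O}_S(\mathcal{C}_1))=1+d$, yields $h^0(\mathcal{I}_{\mathcal{C}_2}(3))=d+2$; subtracting the four cubics $Q\cdot\{x,y,z,w\}$ leaves exactly $d-2$ cubics $\mathcal{S}_1,\dots,\mathcal{S}_{d-2}$ independent modulo $Q$. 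A Castelnuovo--Mumford regularity estimate (showing $\mathcal{C}_2$ is $3$-regular) then forces $\mathcal{I}_{\mathcal{C}_2}=(Q,\mathcal{S}_1,\dots,\mathcal{S}_{d-2})$.

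The main obstacle I anticipate is the cohomological control. Every exact count above — the single quadric, the value $h^0(\mathcal{I}_{\mathcal{C}_2}(3))=d+2$, and the absence of higher generators — rests on vanishing statements ($h^1$ of the relevant twists of $\mathcal{I}_{\mathcal{C}_2}$, or of line bundles on $S$) that are not formal: they must be extracted from the intermediate cohomology of $\mathcal{C}_1$ through liaison duality, and a one-nodal rational curve has a nontrivial Hartshorne--Rao module that has to be identified. Alongside this, the two delicate geometric points are proving that the general $S\in\Lambda_\psi$ is normal enough for adjunction and Riemann--Roch to apply, and establishing rigorously that $\psi^{-1}$ contracts a surface of degree exactly two to the fixed point $p$; these are where I expect the real work to concentrate.
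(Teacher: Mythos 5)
Your liaison bookkeeping for degrees and genera is exactly the paper's (Corollary \ref{cor:formule}), but everything you defer to ``cohomological control'' is the actual content of the statement, and your route through Riemann--Roch on a general $S\in\Lambda_\psi$ makes it needlessly hard. The paper's proof (Corollary \ref{cor:formule} together with Theorem \ref{Thm:genre1}) never touches the surface $S$: because $\mathcal{I}_{\mathcal{C}_1\cup\mathcal{C}_2}$ is a complete intersection ideal, resolved by the Koszul complex and hence without intermediate cohomology, the sequence \eqref{eq:blabla2} and its twists compute all the dimensions \emph{exactly and unconditionally}: $\mathrm{h}^0\mathcal{I}_{\mathcal{C}_2}(2h)=\mathrm{h}^0\omega_{\mathcal{C}_1}=\mathfrak{p}_a(\mathcal{C}_1)=1$, which gives existence \emph{and} uniqueness of $Q$ in one stroke (your computation $\chi\bigl(\mathcal{O}_S(\mathcal{C}_1-H)\bigr)=1$ gives neither until the postponed vanishings are supplied); $\mathrm{h}^0\mathcal{I}_{\mathcal{C}_2}(3h)=\mathrm{h}^0\omega_{\mathcal{C}_1}(h)+2=d+2$; and since $\omega_{\mathcal{C}_1}\cong\mathcal{O}_{\mathcal{C}_1}$ for an arithmetic-genus-one curve, $\omega_{\mathcal{C}_1}(h)=\mathcal{O}_{\mathcal{C}_1}(h)$ is globally generated, so Corollary \ref{cor:cubics} yields generation of $\mathcal{I}_{\mathcal{C}_2}$ by cubics with no Hartshorne--Rao module to identify and no $3$-regularity estimate to prove. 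Your surface-level alternative also has an unacknowledged legitimacy problem: precisely in the range $d\leq 4$ every member of $\Lambda_\psi$ is singular at $p\in\mathcal{C}_1\cap\mathcal{C}_2$, so $\mathcal{C}_1$ and $\mathcal{C}_2$ are Weil divisors through the singular point of the general $S$, not Cartier there, and the adjunction and intersection numbers you use require Mumford-style justification you do not give. For the fixed singular point the paper argues differently as well (Lemma \ref{Lem:indpt}: Bertini applied on a minimal desingularization of a single member of $\Lambda_\psi$ with finite singular locus); your contracted-surface argument can be made to work, but as stated it still has to exclude the double point arising from two points of $\ell$ lying on a surface that $\psi^{-1}$ contracts to a \emph{curve}.

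Two steps, moreover, fail outright rather than being merely deferred. First, your elimination of $d=7$ in the smooth case is wrong: the criteria $\mathrm{h}^0\mathcal{I}_{\mathcal{C}_2}(3h)\geq 4$ and $\deg\mathcal{C}_2\geq 1$ do not exclude a degree-$2$ curve of arithmetic genus $-5$. Such a curve is necessarily a double line whose ideal is generated by $\mathcal{I}_\ell^2$ together with one form of degree $6$, so its space of cubics has dimension $10\geq 4$; the genuine obstruction is the non-ruled hypothesis, which you never invoke here: every cubic through this $\mathcal{C}_2$ lies in $\mathcal{I}_\ell^2$, hence is singular along $\ell$, so all of $\Lambda_\psi$ would consist of ruled cubics. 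Second, for the bullet ``$d\leq 4$ implies all cubics of $\Lambda_\psi$ are singular at $p$'', the phrase ``degree--multiplicity balance of the homaloidal system'' is not an argument, and no soft argument of that shape can exist: the paper's remark following Theorem \ref{Thm:genre1} exhibits $(3,5)$ families (e.g.\ the one leading to $\mathcal{E}_{23}$, where $p$ is only a point of contact) for which the assertion fails, so the hypothesis $d\leq 4$ must enter quantitatively, and your sketch never engages with it. These two points, plus the unproven vanishings above, are genuine gaps.
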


We denote by $\mathrm{Bir}_{3,d,\mathfrak{p}_2}(\mathbb{P}^3_\mathbb{C})$ the subset of {\bf non-ruled} $(3,d)$ birational maps such that~$\mathcal{C}_2$ is of degree $9-d$, and arithmetic genus $\mathfrak{p}_2$. One has the following statement:

\begin{theoalph}\label{thmC}
If $\mathfrak{p}_2\in\{3,\,4\}$, then $\mathrm{Bir}_{3,3,\mathfrak{p}_2}(\mathbb{P}^3_\mathbb{C})$ is non-empty, and irreducible; $\mathrm{Bir}_{3,3,\mathfrak{p}_2}(\mathbb{P}^3_\mathbb{C})$ is empty as soon as $\mathfrak{p}_2\not\in\{3,\,4\}$.

If $\mathfrak{p}_2\in\{1,\,2\}$, then $\mathrm{Bir}_{3,4,\mathfrak{p}_2}(\mathbb{P}^3_\mathbb{C})$ is non-empty, and irreducible; $\mathrm{Bir}_{3,4,\mathfrak{p}_2}(\mathbb{P}^3_\mathbb{C})$ is empty as soon as $\mathfrak{p}_2\not\in\{1,\,2\}$.

The set $\mathrm{Bir}_{3,5,\mathfrak{p}_2}(\mathbb{P}^3_\mathbb{C})$ is empty as soon as $\mathfrak{p}_2\not\in\{-1,\,0,\,1\}$ and 
\begin{itemize}
\item[$\bullet$] if $\mathfrak{p}_2=-1$, then $\mathrm{Bir}_{3,5,\mathfrak{p}_2}(\mathbb{P}^3_\mathbb{C})$ is non-empty, and irreducible;

\item[$\bullet$] if $\mathfrak{p}_2=0$, then $\mathrm{Bir}_{3,5,\mathfrak{p}_2}(\mathbb{P}^3_\mathbb{C})$ is non-empty, and has two irreducible components;

\item[$\bullet$] if $\mathfrak{p}_2=1$, then $\mathrm{Bir}_{3,5,\mathfrak{p}_2}(\mathbb{P}^3_\mathbb{C})$ is non-empty, and has three irreducible components.
\end{itemize}
\end{theoalph}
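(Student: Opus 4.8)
\emph{Strategy.} The plan is to set up a dictionary between a non-ruled map $\psi\in\mathrm{Bir}_{3,d,\mathfrak{p}_2}(\mathbb{P}^3_\mathbb{C})$ and its base-locus curve $\mathcal{C}_2$, and to transport irreducibility (or a component count) from the Hilbert scheme of such curves to $\mathrm{Bir}_{3,d,\mathfrak{p}_2}(\mathbb{P}^3_\mathbb{C})$. In one direction $\psi\mapsto\mathcal{C}_2$; conversely, from a curve $\mathcal{C}_2$ of the prescribed numerical type (and with the ideal structure supplied by Proposition \ref{propB}) one reconstructs $\psi$ as the rational map attached to a $4$-dimensional linear subsystem of $H^0(\mathcal{I}_{\mathcal{C}_2}(3))$. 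Throughout I use the liaison identity in the $(3,3)$ complete intersection $\Gamma_\ell$: since $\mathcal{C}_1$ and $\mathcal{C}_2$ are linked in $\Gamma_\ell$,
\[
\mathfrak{p}_a(\mathcal{C}_1)-\mathfrak{p}_2=\deg\mathcal{C}_1-\deg\mathcal{C}_2=2d-9,
\]
so that $\mathfrak{p}_2$ carries the same information as $\mathfrak{p}_a(\mathcal{C}_1)$, and the admissible regimes correspond to the small values $\mathfrak{p}_a(\mathcal{C}_1)\in\{0,1\}$ when $d=3,4$ and $\mathfrak{p}_a(\mathcal{C}_1)\in\{0,1,2\}$ when $d=5$.

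\emph{Emptiness.} Since $\psi$ is birational and $\mathcal{C}_1=\psi^{-1}_*(\ell)$ is the strict transform of a generic line, it is the birational image of $\mathbb{P}^1$, hence an irreducible rational curve of degree exactly $d$; in particular $\mathfrak{p}_a(\mathcal{C}_1)\geq 0$, whence $\mathfrak{p}_2\geq 9-2d$. For the upper bound I would bound the arithmetic genus of a rational degree-$d$ space curve arising from a non-ruled cubic map, combining a Castelnuovo-type estimate with the structural constraints of Proposition \ref{propB} (the distinguished singular point $p$, the unique quadric $Q$ through $\mathcal{C}_2$). This should force $\mathfrak{p}_a(\mathcal{C}_1)\leq 1$ for $d=3,4$ and $\mathfrak{p}_a(\mathcal{C}_1)\leq 2$ for $d=5$; translating back through the liaison identity yields precisely $\mathfrak{p}_2\in\{3,4\}$, $\{1,2\}$, $\{-1,0,1\}$ in the three bidegrees, and hence emptiness outside these sets.

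\emph{Non-emptiness and irreducibility.} For each admissible value I would describe the Hilbert scheme $\mathcal{H}$ of curves $\mathcal{C}_2$ of degree $9-d$ and arithmetic genus $\mathfrak{p}_2$ satisfying the ideal conditions of Proposition \ref{propB}. The construction $\mathcal{C}_2\rightsquigarrow\psi$ then defines a morphism from an irreducible bundle over an open subset of $\mathcal{H}$ — the fibre recording the choice of $4$-dimensional subsystem of cubics — onto a dense subset of the corresponding locus in $\mathrm{Bir}_{3,d,\mathfrak{p}_2}(\mathbb{P}^3_\mathbb{C})$. On a generic $\mathcal{C}_2$ I must verify that $H^0(\mathcal{I}_{\mathcal{C}_2}(3))$ is large enough, that the associated map is birational of bidegree exactly $(3,d)$ (equivalently that $\deg\mathcal{C}_2=9-d$ is realised, via $(\deg\psi)^2=\deg\psi^{-1}+\deg\mathcal{C}_2$), and that it is non-ruled. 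Whenever $\mathcal{H}$ is irreducible — the cases $(3,3)$, $(3,4)$, and $(3,5)$ with $\mathfrak{p}_2=-1$ — the image is irreducible, giving a single component, and one explicit map settles non-emptiness.

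\emph{The count for $(3,5)$ and the main obstacle.} For $d=5$ the curve $\mathcal{C}_2$ has degree $4$, and the Hilbert scheme of quartic curves of arithmetic genus $-1,0,1$ is reducible: I would enumerate its irreducible components (irreducible curves versus reducible or degenerate configurations, separated by the quadrics passing through them), obtaining the counts $1$, $2$, $3$. The delicate point, and the main obstacle, is to show that this enumeration matches the components of $\mathrm{Bir}$ exactly — that a generic member of each Hilbert component yields a genuine non-ruled bidegree-$(3,5)$ birational map, that distinct Hilbert components give distinct irreducible components of $\mathrm{Bir}_{3,5,\mathfrak{p}_2}(\mathbb{P}^3_\mathbb{C})$ (none collapsing, none contained in the closure of another), and conversely that every such map arises from one of the listed $\mathcal{C}_2$. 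Establishing this bijection requires a case-by-case birationality check together with a dimension and specialization analysis to separate the families; the same machinery, applied to the irreducible Hilbert schemes, simultaneously delivers the irreducibility statements in the remaining bidegrees.
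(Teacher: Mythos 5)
Your emptiness argument is sound and is essentially the paper's: $\mathcal{C}_1$ is an integral rational curve of degree $d$, so $\mathfrak{p}_a(\mathcal{C}_1)\geq 0$; it is non-planar for $d\geq 4$ (Lemma \ref{Lem:notonaplane}), so the Castelnuovo-type bound for integral non-degenerate space curves gives $\mathfrak{p}_a(\mathcal{C}_1)\leq 1$ for $d=4$ and $\mathfrak{p}_a(\mathcal{C}_1)\leq 2$ for $d=5$ (the case $d=3$ being trivial), and Corollary \ref{cor:formule} translates this into the stated ranges of $\mathfrak{p}_2$. But the central mechanism of your proposal --- reading the component count of $\mathrm{Bir}_{3,d,\mathfrak{p}_2}(\mathbb{P}^3_\mathbb{C})$ off the irreducible components of a Hilbert scheme of curves $\mathcal{C}_2$ --- breaks down precisely in the case that matters, $(3,5)$ with $\mathfrak{p}_2=1$. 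There the generic $\mathcal{C}_2$ of $\mathcal{E}_{13}$ is a smooth elliptic quartic, i.e.\ a complete intersection of two quadrics; the generic $\mathcal{C}_2$ of $\mathcal{E}_{24}$ is an irreducible $(2,2)$ complete intersection singular at $p$ (a quadric through $p$ cut with a cone); and the generic $\mathcal{C}_2$ of $\mathcal{E}_{19}$, a twisted cubic together with a bisecant line, is again the complete intersection of the pencil of quadrics through it and lies in the closure of the elliptic quartics. So all three components of $\mathrm{Bir}_{3,5,1}(\mathbb{P}^3_\mathbb{C})$ draw $\mathcal{C}_2$ from one and the same irreducible family of $(2,2)$ complete intersections: your Hilbert scheme has strictly fewer components than the count $3$ you need, and no enumeration of it can produce that count. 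What actually separates $\mathcal{E}_{13}$, $\mathcal{E}_{19}$, $\mathcal{E}_{24}$ is the structure of $\Lambda_\psi$, equivalently the singularities of $\mathcal{C}_1$: a triple point with every cubic of $\Lambda_\psi$ singular at $p$, two double points with every cubic singular at $p$ and $q$, or a point of contact with some member smooth at $q$. For the same reason your ``bundle over $\mathcal{H}$ whose fibre records the $4$-dimensional subsystem'' is not a bundle: the admissible $\Lambda_\psi\subset\mathrm{H}^0\big(\mathcal{I}_{\mathcal{C}_2}(3)\big)$ are cut out by singularity and contact conditions that jump, and over a single Hilbert component these loci split into several families.

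The paper proceeds in the opposite direction: it parametrizes by $\mathcal{C}_1$, via the irreducible families $\mathcal{R}_{p,1}$, $\mathcal{R}_{p,2}$, $\mathcal{R}_{p,q,2}$ of rational quintics with prescribed singular points, together with liaison data $(\mathcal{C},L,u)$ --- $L$ a pencil of cubics linking $\mathcal{C}$ to $\mathcal{C}_{2,L}$, and $u$ a residual pencil in $\mathrm{H}^0\big(\omega_{\mathcal{C}}(h)\big)$ --- through the maps $\kappa_1,\ldots,\kappa_4$; irreducibility of each family follows from irreducibility of each $F_i$, and the components are then separated by invariants of the linear system (presence of smooth members, a connected zero-dimensional base scheme of length $\geq 3$, $\mathrm{h}^0\mathcal{I}_{\mathcal{C}_2}(3h)=6$ versus $7$, existence of a plane $h$ through $p$ with $(hQ_1,hQ_2)\subset\Lambda_\psi$, number of double points), not by Hilbert-scheme components (Theorem \ref{thm:comp35} and the final theorem of \S\ref{Sec:cuboquintic}). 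Two smaller points: even for $\mathfrak{p}_2=-1$ your $\mathcal{H}$ is not irreducible as stated, since degree-$4$, genus-$(-1)$ curves include pairs of disjoint conics, which must be excluded by the argument of \S\ref{subsubsec:C1smooth} (every cubic through two disjoint conics contains the line $\mathcal{P}_1\cap\mathcal{P}_2$); your ``ideal conditions'' absorb this only if you add that $\mathcal{I}_{\mathcal{C}_2}$ is generated by cubics. For $(3,3)$ and $(3,4)$ a $\mathcal{C}_2$-based dictionary can be repaired, since there $\Lambda_\psi$ is determined by $\mathcal{C}_2$ together with finitely many point choices, as in the paper's descriptions of $\mathcal{E}_3$ and $\mathcal{E}_7$ (and for $(3,3)$ the paper simply invokes Pan's identifications $\mathbf{T}_{3,3}^{\mathbf{D}}=\mathrm{Bir}_{3,3,3}(\mathbb{P}^3_\mathbb{C})$, $\mathbf{T}_{3,3}^{\mathbf{J}}=\mathrm{Bir}_{3,3,4}(\mathbb{P}^3_\mathbb{C})$); but as written the $(3,5)$ count, which is the heart of Theorem \ref{thmC}, does not follow from your argument.
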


\subsection*{Organization of the article} In \S \ref{Sec:defnot} we explain the particular case of ruled birational maps and set some notations. Then \S \ref{Sec:galfact} is devoted to liaison theory that plays a big role in the description of the irreducible components of $\mathrm{Bir}_{3,3}(\mathbb{P}^3_\mathbb{C})$ (\emph{see} \S\ref{Sec:cubocubique}), $\mathrm{Bir}_{3,4}(\mathbb{P}^3_\mathbb{C})$ (\emph{see} \S\ref{Sec:cuboquartic}) and $\mathrm{Bir}_{3,5}(\mathbb{P}^3_\mathbb{C})$ (\emph{see} \S\ref{Sec:cuboquintic}). In the last section we give some illustrations of invariants considered by \textsc{Hudson}, especially concerning the local study of the preimage of a line. Since \textsc{Hudson}'s book is very old, let us recall her classification in the first part of the appendix. 

\subsection*{Acknowledgment} The authors would like to thank J\'er\'emy \textsc{Blanc} and the referee for their helpful comments.

\section{Definitions, notations and first properties}\label{Sec:defnot}

\subsection{Definitions and notations}\label{subsec:def}

Let $\psi\colon\mathbb{P}^3_\mathbb{C}\dashrightarrow\mathbb{P}^3_\mathbb{C}$ be a rational map given, for some choice of coordinates, by  
\[
(z_0:z_1:z_2:z_3)\dashrightarrow\big(\psi_0(z_0,z_1,z_2,z_3):\psi_1(z_0,z_1,z_2,z_3):\psi_2(z_0,z_1,z_2,z_3):\psi_3(z_0,z_1,z_2,z_3)\big) 
\]
where the $\psi_i$'s are homogeneous polynomials of the same degree $d$, and without common factors. The map~$\psi$ is called a \textbf{\textit{\textsc{Cremona} transformation}} or a \textbf{\textit{birational map of $\mathbb{P}^3_\mathbb{C}$}} if it has a rational inverse~$\psi^{-1}$. The \textbf{\textit{degree}} of $\psi$, denoted $\deg\psi$, is $d$. The pair $(\deg\psi,\deg\psi^{-1})$ is the \textbf{\textit{bidegree}} of~$\psi$, we say that~$\psi$ is a $(\deg\psi,\deg\psi^{-1})$ birational map. The \textbf{\textit{indeterminacy set}} of $\psi$ is the set of the common zeros of the $\psi_i$'s. Denote by $\mathcal{I}_\psi$ the ideal generated by the $\psi_i$'s, and by $\Lambda_\psi\subset\mathrm{H}^0\big(\mathcal{O}_{\mathbb{P}^3_\mathbb{C}}(d)\big)$ the subspace of dimension $4$ generated by the $\psi_i$, and by $\deg\mathcal{I}_\psi$ the degree of the scheme defined by the ideal $\mathcal{I}_\psi$. The scheme whose ideal is $\mathcal{I}_\psi$ is denoted $F_\psi$. It is called \textbf{\textit{base locus}} of $\psi$. If~$\dim F_\psi=0$ than $F_\psi^1=\emptyset$, otherwise $F_\psi^1$ is the maximal subscheme of $F_\psi$ of dimension $1$ without isolated point, and without embedded point. Furthermore if $\mathcal{C}_i$ is a curve, then $\omega_{\mathcal{C}_i}$ is its dualizing sheaf. 

\begin{rem}
The second condition can also be stated as follows: $\mathbb{P}^3_\mathbb{C}\dashrightarrow \vert\mathcal{J}(3)\vert=\mathbb{P}^{3+k}_\mathbb{C}$ has an image of dimension $3$, and degree $k+1$.
\end{rem}

Let us give a few comments about Table~VI of \cite{Hudson}. For any subscheme $X$ of $\mathbb{P}^3_\mathbb{C}$ denote by $\mathcal{I}_X$ the ideal of $X$ in $\mathbb{P}^3_\mathbb{C}$. Let $\psi$ be a $(3,d)$ birational map. A point~$p$ is a \textbf{\textit{double point}} if all the cubic surfaces of $\Lambda_\psi$ are singular at~$p$. A point $p$ is a \textbf{\textit{binode}} if all the cubic surfaces of $\Lambda_\psi$ are singular at~$p$ with order $2$ approximation at~$p$ a quadratic form of rank $\leq 2$ (but this quadratic form is allowed to vary in $\Lambda_\psi$). In other words $p$ is a binode if there is a degree $1$ element $h$ of $\mathcal{I}_p$ such that all the cubics belong to $(h\cdot\mathcal{I}_p)+\mathcal{I}_p^3$. A point $p$ is a \textbf{\textit{double point of contact}} if the general element of $\Lambda_\psi$ is singular at~$p$ with order~$2$ approximation at $p$ a quadratic form generically constant on $\Lambda_\psi$. In other words $p$ is a double point of contact if all the cubics belong to $\mathcal{I}_p^3+(Q)$ with $Q$ of degree~$2$ and singular at $p$. A point~$p$ is a \textbf{\textit{point of contact}} if all the cubics belong to $\mathcal{I}_p^2+(\mathcal{S})$ where $\mathcal{S}$ is a cubic smooth at $p$. A point $p$ is a \textbf{\textit{point of osculation}} if all the cubics belong to $\mathcal{I}_p^3+(\mathcal{S})$ where $\mathcal{S}$ is a cubic smooth at $p$.

\begin{nota}
We will denote by $\mathcal{E}_i$ the $i$-th family of Table VI and by $\mathbb{C}[z_0,z_1,\ldots,z_n]_d$ the set of homogeneous polynomials of degree $d$ in the variables $z_0$, $z_1$, $\ldots$, $z_n$.
\end{nota}

\subsection{First properties}

Let us now focus on particular birational maps that cannot be dealt as the others: the ruled birational maps of $\mathbb{P}^3_\mathbb{C}$. Recall that there are two projective models of irreducible ruled cubic surfaces ; they both have the same normalization: $\mathbb{P}^2_\mathbb{C}$ blown up at one point which can be realized as a cubic surface in $\mathbb{P}^4_\mathbb{C}$ (\emph{see} \cite[Chapter $10$, introduction of \S\, 4.4]{Dolgachev:book}, \cite[Chapter $9$, \S\, 2.1]{Dolgachev:book}).

\begin{lem}\label{Lem:ruled}
Assume that $2\leq d\leq 5$.
\begin{itemize}
\item[$\bullet$] The set $\mathfrak{ruled}_{3,d}$ is irreducible.

\item[$\bullet$] Let $\psi$ be a general element of $\mathfrak{ruled}_{3,d}$, and let $\delta$ be the common line to all elements of $\big\{\mathrm{Sing}\,\mathcal{S}\,\vert\,\mathcal{S}\in\Lambda_\psi\big\}$; then
\[
\mathcal{I}_\psi=\mathcal{I}_\delta^2\cap\mathcal{I}_{\Delta_1}\cap\mathcal{I}_{\Delta_2}\cap\ldots\cap\mathcal{I}_{\Delta_{5-d}}\cap\mathcal{I}_K
\]
where $\Delta_i$ are disjoint lines that intersect $\delta$ at a unique point, and $K$ is a general reduced scheme of length $2d-4$.
\end{itemize}
\end{lem}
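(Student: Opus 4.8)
The plan is to reduce everything to the geometry of a single ruled cubic surface and then globalise over the linear system $\Lambda_\psi$. The backbone is the classical fact, underlying the two models of ruled cubic surfaces recalled just before the statement, that an irreducible cubic surface is ruled if and only if it is singular along a line: if a cubic $\mathcal{S}$ is singular along a line $\delta$, the pencil of planes containing $\delta$ cuts $\mathcal{S}$ along $2\delta$ plus a residual line, and these residual lines are exactly the generators of the ruling; conversely the singular locus of an irreducible ruled cubic is its unique double line. Since the general member of $\Lambda_\psi$ is the strict transform $\psi^{-1}_*(H)$ of a general plane $H$, the map $\psi$ is ruled precisely when the general member of $\Lambda_\psi$ is singular along a line. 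I would open the proof by recording this equivalence.

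The first substantial step, and the one I expect to be the main obstacle, is to pass from \emph{``the general member of $\Lambda_\psi$ is singular along a line''} to \emph{``there is one line $\delta$ along which every member is singular''}. Singularity along a line being a closed condition, it holds for every $\mathcal{S}\in\Lambda_\psi$ once it holds generically over the irreducible $\Lambda_\psi\cong\mathbb{P}^3$; each $\mathcal{S}$ then carries a unique double line $\delta_\mathcal{S}$, and $\mathcal{S}\mapsto\delta_\mathcal{S}$ is a morphism on the connected $\Lambda_\psi$. The content is that it is constant. I would prove that $\delta_\mathcal{S}$ lies in the base locus $F_\psi$ — equivalently that it is a line component of the fixed curve $\mathcal{C}_2$ of Notation \ref{nota:C2} — and conclude by connectedness: a continuous assignment of one of the finitely many line components of the fixed $\mathcal{C}_2$ must be constant, so $\delta_\mathcal{S}\equiv\delta$. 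To see $\delta_\mathcal{S}\subset F_\psi$ I would use the scroll description of the two models: normalising $\mathcal{S}$ by $\mathbb{F}_1$ and following the generators, $\psi$ carries the pencil of generators of $\mathcal{S}$ to the pencil of lines of $H$ through one point and contracts $\delta_\mathcal{S}$, which pins $\delta_\mathcal{S}$ to the locus common to all members. This verification, resting on the precise geometry of the two normal forms, is where I expect the real work to lie; a purely dimension-theoretic comparison of the incidence variety $\{(\mathcal{S},\ell)\,\vert\,\ell\subset\mathrm{Sing}\,\mathcal{S}\}$ with its image in $G(1,3)$ does not by itself forbid a moving double line.

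Once $\delta$ is fixed we have $\Lambda_\psi\subset\mathrm{H}^0(\mathcal{I}_\delta^2(3))$, and a direct monomial count (with $\delta=\{z_2=z_3=0\}$, every element is $z_2^2A+z_2z_3B+z_3^2C$ for linear forms $A,B,C$) gives $\dim\mathrm{H}^0(\mathcal{I}_\delta^2(3))=10$. I would then pin down the residual base locus numerically. Two general members, each singular along $\delta$, meet along $\delta$ with multiplicity $2\cdot 2=4$, so $\delta$ enters $\mathcal{C}_2$ with multiplicity $4$ and contributes $4$ to $\deg\mathcal{C}_2=9-d$ (from the relation $(\deg\psi)^2=\deg\psi^{-1}+\deg\mathcal{C}_2$); the remaining degree $5-d$ is carried by that many residual generators $\Delta_1,\ldots,\Delta_{5-d}$, each a line meeting $\delta$. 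Since every member is already singular along $\delta$ it vanishes to order two at $\Delta_i\cap\delta$, so forcing it through all of $\Delta_i$ costs only two further conditions, while a general point costs one; the identity
\[
10-2(5-d)-(2d-4)=4
\]
shows that exactly $2d-4$ free base points $K$ are needed to cut $\Lambda_\psi$ down to dimension $4$, with no condition wasted. For general data the $\Delta_i$ are disjoint, meet $\delta$ in a single point, and $K$ is reduced; as the resulting intersection $\mathcal{I}_\delta^2\cap\mathcal{I}_{\Delta_1}\cap\cdots\cap\mathcal{I}_{\Delta_{5-d}}\cap\mathcal{I}_K$ is saturated and has exactly a $4$-dimensional degree-$3$ part equal to $\Lambda_\psi$, it must coincide with $\mathcal{I}_\psi$.

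Finally, for irreducibility I would parametrise $\mathfrak{ruled}_{3,d}$ by the irreducible variety $P$ of such data: a line $\delta\in G(1,3)$, an unordered configuration of $5-d$ lines through $\delta$, a reduced length-$(2d-4)$ scheme $K$, together with a choice of coordinates on the target in $\mathrm{PGL}(4;\mathbb{C})$. Each factor is irreducible, hence so is $P$; over the open locus where the configuration is general, $\mathrm{H}^0$ of the corresponding ideal in degree $3$ has dimension $4$ and defines a map $\psi$, and one checks on a normal form that $\psi$ is birational of bidegree $(3,d)$, birationality being an open condition. The morphism $P\to\mathfrak{ruled}_{3,d}$ so obtained is dominant by the two previous steps, which show that every ruled map has this structure; therefore $\mathfrak{ruled}_{3,d}$ is irreducible, and its general element has the announced base locus. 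The range $2\leq d\leq 5$ reappears here as the two inequalities $0\leq 5-d$ and $0\leq 2d-4$ on the numbers of lines and of points.
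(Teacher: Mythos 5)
Your plan reproduces the shape of the statement, but the two steps that carry the actual content of the lemma are asserted rather than proved, and the bookkeeping you offer in their place cannot prove them. The paper's proof works on the normalization $\pi\colon\widetilde{\mathcal{S}}\to\mathcal{S}$, $\widetilde{\mathcal{S}}\cong\mathbb{F}_1$, with $H=2h-E_p$, $f=h-E_p$, $\widetilde{\delta}=h$: writing $\widetilde{F_\psi^1}=2\widetilde{\delta}+D$, it deduces $D\cdot f=0$ and $D\cdot H=5-d$ from $\widetilde{\mathcal{C}_1}\cdot f=1$ and $\widetilde{\mathcal{C}_1}\cdot H=d$, whence $D=(5-d)f$, i.e.\ the residual fixed curve consists of exactly $5-d$ generators; and it gets the length of $K$ from $\widetilde{\mathcal{C}_1}^2=2d-3$, of which one intersection point is the moving one. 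Your substitute for the first step is the identity $10-2(5-d)-(2d-4)=4$, but this is only a consistency check and does not single out the announced structure: for $d=3$ the configuration ``$\delta$ doubled, one fixed \emph{conic} meeting $\delta$ at a point, one base point'' passes the same test (containing such a conic imposes $7-2=5$ conditions on $\mathrm{H}^0\big(\mathcal{I}_\delta^2(3)\big)$, and $10-5-1=4$, with $\deg\mathcal{C}_2=4+2=6=9-d$). That configuration is excluded only by passing to the normalization, where the moving part would have class $3H-2h-h=3f$, a union of generators, contradicting the irreducibility of $\mathcal{C}_1$ --- precisely the intersection-theoretic argument your proposal omits. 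Likewise your ``exactly $2d-4$ free base points are needed, with no condition wasted'' presupposes both that the residual points impose independent conditions and that $\Lambda_\psi$ is the full degree-$3$ part of the saturated ideal, which is part of what has to be established; the paper instead computes the residual length directly on the surface.

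The step you single out as the main obstacle also contains a non sequitur. From the $2{:}1$ covering $\widetilde{\delta}\to\delta_{\mathcal{S}}$ one can indeed conclude that $\delta_{\mathcal{S}}$ is contracted or lies in the indeterminacy locus, but contraction does \emph{not} ``pin $\delta_{\mathcal{S}}$ to the locus common to all members'': a curve contracted to a point $q$ lies only on the strict transforms of the planes through $q$, a proper subfamily of $\Lambda_\psi$, not in the base locus. The fact you need, $\delta_{\mathcal{S}}\subset F_\psi$, is immediate from \textsc{Bertini}: the general member of a linear system is smooth outside the base locus, so $\mathrm{Sing}\,\mathcal{S}\subset F_\psi$ for general $\mathcal{S}$ --- the same mechanism the paper uses one dimension down in Lemma \ref{Lem:indpt} --- after which your finiteness-plus-connectedness argument does give $\delta_{\mathcal{S}}\equiv\delta$. (The paper itself takes the common line as given in the statement and starts from the normalization.) Your final irreducibility argument, parametrizing by $\big(\delta,\Delta_1,\ldots,\Delta_{5-d},K\big)$ plus a choice of basis, is essentially equivalent to the paper's converse, which takes a general member of $\vert\mathcal{O}_{\widetilde{S}}\big((5-d)f\big)\vert$ and $2d-4$ general points and uses $\mathrm{h}^0\big(\mathcal{I}(\widetilde{\mathcal{C}_1})\big)=3$ together with the surjection $\mathrm{H}^0\mathcal{O}_{\mathbb{P}^3_\mathbb{C}}(3)\twoheadrightarrow\mathrm{H}^0\mathcal{O}_S(3)$; but it is only valid once the two gaps above are closed, since both directions rely on the base locus having exactly the shape $\mathcal{I}_\delta^2\cap\mathcal{I}_{\Delta_1}\cap\ldots\cap\mathcal{I}_{\Delta_{5-d}}\cap\mathcal{I}_K$.
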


\begin{proof}
Let $\psi$ be an element of $\mathfrak{ruled}_{3,d}$. Recall that $F_\psi^1$ is the maximal subscheme of $F_\psi$ of dimension $1$ without isolated point, and without embedded point, \emph{i.e.} $F_\psi^1$ is a curve locally \textsc{Cohen}-\textsc{Macaulay}. Let us define $\mathcal{I}_K$ by: $\mathcal{I}_K=(\mathcal{I}_\psi:\mathcal{I}_{F_\psi^1})$.

An irreducible element $\mathcal{S}$ of $\Lambda_\psi$ is a ruled surface; it is also the projection of a ruled surface~$\widetilde{\mathcal{S}}$ of $\mathbb{P}^4_\mathbb{C}$. Recall that $\widetilde{\mathcal{S}}$ is also the blow-up $\widetilde{\mathbb{P}^2_\mathbb{C}}(p)$ of $\mathbb{P}^2_\mathbb{C}$ at $p$ embedded by $\vert\mathcal{I}_p(2h)\vert$, where $h$ is the class of an hyperplane in $\mathbb{P}^2_\mathbb{C}$. Let us denote by $\pi$ the projection $\widetilde{\mathcal{S}}\to\mathcal{S}$, by $H$ the class of an hyperplane of $\mathbb{P}^4_\mathbb{C}$, and by $E_p$ the exceptional divisor associated to the blow-up of $p$. Set $\widetilde{\delta}=\pi^{-1}\delta$, $\widetilde{\mathcal{C}_1}=\pi^{-1}(\mathcal{C}_1)$, and $\widetilde{F_\psi^1}=\pi^{-1}(F_\psi^1)$. One has 
\[
\widetilde{\delta}=h,\qquad H=2h-E_p, \qquad f=h-E_p,\qquad \widetilde{F_\psi^1}=2\widetilde{\delta}+D
\]
where $D$ is an effective divisor.
As $\widetilde{\mathcal{C}_1}+\widetilde{F_\psi^1}=3H$, $\widetilde{\mathcal{C}_1}\cdot f=1$ and $\widetilde{\mathcal{C}_1}\cdot H=d$ one gets $D\cdot f=0$, and $D\cdot H=5-d$; therefore $D=(5-d)f$. And we conclude that $\psi$ has a residual base scheme of length $2d-4$ from $\widetilde{\mathcal{C}_1}^2=2d-3$.

\bigskip

Conversely, take a general element of $\vert\mathcal{O}_{\widetilde{S}}\big((5-d)f\big)\vert$ and $2d-4$ general points on $\widetilde{S}$ of ideal~$\mathcal{I}$. We have $\mathrm{h}^0\big(\mathcal{I}(\widetilde{\mathcal{C}_1})\big)=3$, and thanks to the surjection $\mathrm{H}^0\mathcal{O}_{\mathbb{P}^3_\mathbb{C}}(3)\twoheadrightarrow \mathrm{H}^0\mathcal{O}_S(3)$ we get an element of~$\mathfrak{ruled}_{3,d}$.
\end{proof}

\begin{lem}\label{Lem:inclruled}
The following inclusions hold: 
\[
\mathfrak{ruled}_{3,2}\subset\overline{\mathfrak{ruled}_{3,3}},\qquad \mathfrak{ruled}_{3,3}\subset\overline{\mathfrak{ruled}_{3,4}},\qquad \mathfrak{ruled}_{3,4}\subset\overline{\mathfrak{ruled}_{3,5}}.
\]
\end{lem}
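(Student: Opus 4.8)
The three inclusions are the cases $d\in\{2,3,4\}$ of the single assertion $\mathfrak{ruled}_{3,d}\subset\overline{\mathfrak{ruled}_{3,d+1}}$, so I would treat them uniformly. Since the right-hand side is closed and the general elements of $\mathfrak{ruled}_{3,d}$ form a dense subset, it is enough to exhibit one general $(3,d)$ ruled map as a limit of $(3,d+1)$ ruled maps. I would start from the description of the base scheme given by Lemma \ref{Lem:ruled}: a general element of $\mathfrak{ruled}_{3,d}$ has base ideal $\mathcal{I}_\delta^2\cap\mathcal{I}_{\Delta_1}\cap\ldots\cap\mathcal{I}_{\Delta_{5-d}}\cap\mathcal{I}_K$ with $K$ of length $2d-4$, whereas a general element of $\mathfrak{ruled}_{3,d+1}$ has $\mathcal{I}_\delta^2\cap\mathcal{I}_{\Delta_1}\cap\ldots\cap\mathcal{I}_{\Delta_{4-d}}\cap\mathcal{I}_{K'}$ with $K'$ of length $2d-2$. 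Thus passing from bidegree $(3,d+1)$ to $(3,d)$ amounts to trading two base points for one extra ruling $\Delta_{5-d}$, and this is exactly the degeneration I would realize.

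Concretely, I would fix the line $\delta$, the rulings $\Delta_1,\ldots,\Delta_{4-d}$, and $2d-4$ general points $q_3,\ldots,q_{2d-2}$, and introduce two further points $q_1(t),q_2(t)$ depending on a parameter $t$ so that for general $t$ the whole configuration satisfies the genericity of the converse part of Lemma \ref{Lem:ruled}, whence $\psi_t\in\mathfrak{ruled}_{3,d+1}$, while as $t\to 0$ the points $q_1(t),q_2(t)$ move onto a common ruling $\Delta_{5-d}$ meeting $\delta$ at a point $x_0$, with limits distinct from $x_0$ and from each other. The associated $4$-dimensional spaces $\Lambda_{\psi_t}\subset\mathrm{H}^0\big(\mathcal{O}_{\mathbb{P}^3_\mathbb{C}}(3)\big)$ define an arc in the Grassmannian $\mathrm{Gr}\big(4,\mathrm{H}^0(\mathcal{O}_{\mathbb{P}^3_\mathbb{C}}(3))\big)$, which by properness extends to $t=0$; I call $\Lambda_0$ the limit, still of dimension $4$, and $\psi_0$ the corresponding cubic map.

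The heart of the argument is to identify $\Lambda_0$ with the base system of a general $(3,d)$ ruled map. Every cubic $\mathcal{S}\in\Lambda_0$ is a limit of cubics singular along $\delta$ and vanishing at $q_1(t),q_2(t)$, so $\mathcal{S}$ is singular along $\delta$ and vanishes at the two limit points lying on $\Delta_{5-d}$. Restricting $\mathcal{S}$ to $\Delta_{5-d}\cong\mathbb{P}^1$ then gives a cubic form vanishing to order $\geq 2$ at $x_0$ (because $\mathcal{S}$ has multiplicity $\geq 2$ there) and at two further points, hence to total order $\geq 4>3$; therefore it vanishes identically and $\Delta_{5-d}$ is a fixed component. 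This shows $\Lambda_0\subseteq\mathcal{I}_\delta^2\cap\mathcal{I}_{\Delta_1}\cap\ldots\cap\mathcal{I}_{\Delta_{5-d}}\cap\mathcal{I}_{\{q_3,\ldots,q_{2d-2}\}}$, which is precisely the degree-$3$ part of the base ideal of a general $(3,d)$ ruled map and is $4$-dimensional by Lemma \ref{Lem:ruled}. As $\dim\Lambda_0=4$ as well, the inclusion is an equality, so $\psi_0$ is a general element of $\mathfrak{ruled}_{3,d}$ and lies in $\overline{\mathfrak{ruled}_{3,d+1}}$.

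The main obstacle is this last identification: one must rule out that $\Lambda_0$ is strictly smaller than the full base system $\Lambda_{\psi_0}$, i.e. that extra conditions survive in the limit. This is exactly what the dimension count settles, since $\Lambda_0$ has dimension $4$ automatically, being a limit in a Grassmannian of $4$-planes, while Lemma \ref{Lem:ruled} guarantees that the candidate limit system is also $4$-dimensional and defines a genuine $(3,d)$ ruled map with no common factor. The bookkeeping is coherent: trading one ruling for two base points raises the dimension by three, matching $\dim\mathfrak{ruled}_{3,d+1}=\dim\mathfrak{ruled}_{3,d}+3$, and the degree of $\mathcal{C}_2$ grows from $8-d$ to $9-d$ as the ruling $\Delta_{5-d}$ is absorbed into the base locus.
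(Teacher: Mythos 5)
Your proposal is correct and is essentially the paper's own argument: the paper likewise specializes two of the base points $p_1,p_2$ until the line $(p_1p_2)$ meets $\delta$, observes that this line is then automatically a fixed component of $\Lambda_\psi$ (your order-$\geq 4$ restriction argument on the line), and concludes that the limit is a generic element of $\mathfrak{ruled}_{3,d}$. Your version merely makes explicit what the paper leaves implicit --- the limit in the Grassmannian and the $4$-dimensional count identifying $\Lambda_0$ with the full system of Lemma \ref{Lem:ruled} --- and runs the same degeneration uniformly in $d$ where the paper treats $(3,5)\to(3,4)$ and says the other cases are similar.
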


\begin{proof}[Proof $($with the notations introduced in the proof of Lemma \ref{Lem:ruled}$)$] 
Let us start with an element of $\overline{\mathfrak{ruled}_{3,5}}$ with base curve $\delta^2$ and $6$ base points $p_i$ in general position as decribed in Lemma \ref{Lem:ruled}. Then move two of the $p_i$, for instance $p_1$, $p_2$ until the line $(p_1p_2)$ intersects $\delta$. The line $(p_1p_2)$ is now automatically in the base locus of the linear system $\Lambda_\psi$, and we obtain like this a generic element of~$\mathfrak{ruled}_{3.4}$.

A similar argument allows to prove the two other inclusions. 
\end{proof}

Let us recall the notion of genus of a birational map (\cite[Chapter IX]{Hudson}). The \textbf{\textit{genus}} $\mathfrak{g}_\psi$ of~$\psi\in\mathrm{Bir}(\mathbb{P}^3_\mathbb{C})$ is the geometric genus of the curve $h\cap \psi^{-1}(h')$ where $h$ and $h'$ are generic hyperplanes of~$\mathbb{P}^3_\mathbb{C}$. The equality $\mathfrak{g}_\psi=\mathfrak{g}_{\psi^{-1}}$ holds.

\begin{rem} 
If $\psi$ is a birational map of $\mathbb{P}^3_\mathbb{C}$ of degree $1$ (resp. $2$, resp. $3$) then $\mathfrak{g}_\psi=0$ (resp.  $\mathfrak{g}_\psi=0$, resp. $\mathfrak{g}_\psi\leq 1$).
\end{rem}

One can give a characterization of ruled maps of $\mathrm{Bir}_{3,d}(\mathbb{P}^3_\mathbb{C})$ in terms of the genus.

\begin{pro}
Let $\psi$ be in $\mathrm{Bir}_{3,d}(\mathbb{P}^3_\mathbb{C})$, $2\leq d\leq 5$. The genus of $\psi$ is zero if and only if $\psi$ is ruled.
\end{pro}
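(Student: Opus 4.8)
The plan is to reinterpret $\mathfrak g_\psi$ as the sectional genus of the cubic surface $S:=\psi^{-1}(h')$, and then to decide whether $S$ is ruled from the dimension of its singular locus. Since $\psi^{-1}(h')$ is cut out by a general linear combination $\sum_i a_i\psi_i$, the surface $S$ is nothing but a general member of $\Lambda_\psi$; as $\psi$ is dominant, $\Lambda_\psi$ is not composed with a pencil, so Bertini guarantees that $S$ is an irreducible reduced cubic surface and that $C:=h\cap S$ is a general plane cubic section of $S$. Thus $\mathfrak g_\psi$ is exactly the geometric genus of a general plane section of $S$, \emph{i.e.} the sectional genus of $S$, and everything reduces to a statement about cubic surfaces.

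If $\psi$ is ruled I would run the computation on the normalization, keeping the notation of the proof of Lemma~\ref{Lem:ruled}. Let $\pi\colon\widetilde{\mathcal S}\to S$ be the normalization, $\widetilde{\mathcal S}$ being $\mathbb P^2$ blown up at $p$, with $H=2h-E_p$ and $K_{\widetilde{\mathcal S}}=-3h+E_p$. The section $C$ lifts to a member $\widetilde C\in|H|$, and adjunction gives $2\mathfrak p_a(\widetilde C)-2=H\cdot(H+K_{\widetilde{\mathcal S}})=(2h-E_p)\cdot(-h)=-2$, so $\mathfrak p_a(\widetilde C)=0$. For general $h$ the curve $\widetilde C$ is irreducible, hence of geometric genus $0$; since $\pi$ is birational, $C$ has the same geometric genus, and $\mathfrak g_\psi=0$.

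For the converse I would use Bertini in the opposite direction: a general member of $\Lambda_\psi$ is smooth away from the base locus $F_\psi$, so $\mathrm{Sing}(S)\subseteq F_\psi$. If $S$ were normal, $\mathrm{Sing}(S)$ would be finite, a general plane $h$ would avoid it, and $C$ would be a smooth plane cubic, forcing $\mathfrak g_\psi=1$. Hence $\mathfrak g_\psi=0$ forces $S$ to be non-normal, so $\mathrm{Sing}(S)$ is a curve. This curve must be a line: an irreducible cubic singular along a curve of degree $\geq 2$ would have every cubic of $\mathcal I_{\mathrm{Sing}(S)}^2$ divisible by a linear form, contradicting the irreducibility of $S$. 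So $S$ carries a double line $\Delta\subseteq F_\psi$, and it remains to recognise such a cubic as a ruled cubic surface in the sense recalled before Lemma~\ref{Lem:ruled}.

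The main obstacle is precisely this last step, namely excluding cones: an irreducible cubic with a double line is either a ruled cubic scroll (normalization $\mathbb P^2$ blown up at a point) or a cone over a nodal or cuspidal plane cubic (whose normalization is \emph{not} the blow-up of $\mathbb P^2$ at a point); the latter also has sectional genus $0$ yet fails to be ruled in our sense, so it must be ruled out. The vertex of such a cone lies in $\mathrm{Sing}(S)\subseteq F_\psi$, and if it were the same fixed point $v$ for general $h'$ then every cubic of $\Lambda_\psi$ would lie in $\mathcal I_v^3$, \emph{i.e.} be a cone with vertex $v$, so $\psi$ would factor through the linear projection from $v$ and could not be dominant -- a contradiction. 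The genuinely delicate case, where I expect the real work to lie, is to exclude a cone whose vertex \emph{moves} with $h'$; here I would feed in the precise description of $F_\psi$ obtained from the liaison analysis of the paper. Once cones are excluded, the only remaining non-normal possibility is the ruled cubic surface, and the equivalence follows.
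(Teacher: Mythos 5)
Your reduction to the sectional genus of a general member $S\in\Lambda_\psi$ is exactly the paper's strategy (its proof is a two-line dichotomy: $\psi$ is not ruled if and only if the general element of $\Lambda_\psi$ has at most isolated singularities, and $\mathfrak{g}_\psi=0$ if and only if the generic curve $h\cap\psi^{-1}(h')$ is a singular rational cubic), and the steps you do carry out are sound: the adjunction computation for the ruled direction, and the chain ``$\mathfrak{g}_\psi=0$ $\Rightarrow$ $S$ non-normal $\Rightarrow$ $\mathrm{Sing}\,S$ is a line contained in $F_\psi$''. But the final step of your plan --- excluding cones over singular plane cubics with moving vertex --- cannot be carried out, because such maps exist. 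Consider
\[
\psi=\big(z_0^2z_2:z_0^2z_3:z_0z_1^2:z_1^3\big),
\]
which is birational with inverse $\big(X_2^3:X_2^2X_3:X_0X_3^2:X_1X_3^2\big)$, hence of bidegree $(3,3)$. Its general member $z_0^2(\alpha z_2+\beta z_3)+z_1^2(\gamma z_0+\delta z_1)$ is an irreducible cone over a cuspidal plane cubic whose vertex $(0:0:-\beta:\alpha)$ \emph{moves} along $\delta=\{z_0=z_1=0\}$ as the member varies; its generic plane section is a cuspidal cubic, so $\mathfrak{g}_\psi=0$. Your fixed-vertex argument is correct, but no liaison input will exclude this moving-vertex configuration, since it is realized by an actual birational map.

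The way out is that such maps must land on the \emph{ruled} side rather than be excluded. Since $\mathrm{Sing}\,S\subset F_\psi$ for every general member, since the one-dimensional part of $F_\psi$ contains only finitely many lines, and since ``all cubics of $\Lambda_\psi$ singular along a fixed line $\ell$'' is a closed (indeed linear) condition on the irreducible space $\vert\Lambda_\psi\vert\simeq\mathbb{P}^3_\mathbb{C}$, one of these finitely many conditions holds identically: every member of $\Lambda_\psi$ is singular along one common line $\delta$, \emph{i.e.} $\Lambda_\psi\subset\mathrm{H}^0\big(\mathcal{I}_\delta^2(3)\big)$. This common double line is precisely the paper's working criterion for ruledness (compare the remark where $(z_0z_1^2:z_0^2z_1:z_0^2z_2:z_1^2z_3)$ is recognized as ruled because all partial derivatives vanish on $z_0=z_1=0$, and the line $\delta$ of Lemma \ref{Lem:ruled}): a cone over a singular plane cubic is swept out by lines, hence counts as a ruled cubic surface even though its normalization is not $\mathbb{P}^2_\mathbb{C}$ blown up at a point --- under your narrower reading of ``ruled cubic surface'' the displayed $\psi$ would contradict the proposition itself, so the definition has to be read as including cones. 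With that reading your argument closes, and in fact the forward direction simplifies uniformly: a generic plane meets $\delta$ in a point which is a double point of the irreducible (Bertini) cubic section, which is therefore rational.
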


\begin{proof}
On the one hand the base scheme of an element of~$\mathrm{Bir}_{3,d}(\mathbb{P}^3_\mathbb{C})$ has at most isolated singularities if and only if the map is not ruled; on the other hand $\mathfrak{g}_\psi=0$ if and only if for generic hyperplanes $h$, $h'$ of $\mathbb{P}^3_\mathbb{C}$ the curve $h\cap\psi^{-1}(h')$ is a singular rational cubic.
\end{proof}

\section{Liaison}\label{Sec:galfact}

According to \cite{PeskineSzpiro} we say that two curves $\Gamma_1$ and $\Gamma_2$ of $\mathbb{P}^3_\mathbb{C}$ are \textbf{\textit{geometrically linked}} if 
\begin{itemize}
\item[$\bullet$] $\Gamma_1\cup\Gamma_2$ is a complete intersection,

\item[$\bullet$] $\Gamma_1$ and $\Gamma_2$ have no common component.
\end{itemize}

\smallskip

Let $\Gamma_1$ and $\Gamma_2$ be two curves geometrically linked. Recall that $\mathcal{I}_{\Gamma_1\cup\Gamma_2}=\mathcal{I}_{\Gamma_1}\cap\mathcal{I}_{\Gamma_2}$. According to \cite[Proposition 1.1]{PeskineSzpiro} one has $\frac{\mathcal{I}_{\Gamma_1}}{\mathcal{I}_{\Gamma_1\cup\Gamma_2}}=\mathrm{Hom}\big(\mathcal{O}_{\Gamma_2},\mathcal{O}_{\Gamma_1\cup\Gamma_2}\big)$. Since the kernel of $\mathcal{O}_{\Gamma_1\cup\Gamma_2}\longrightarrow\mathcal{O}_{\Gamma_2}$ is $\frac{\mathcal{I}_{\Gamma_1}}{\mathcal{I}_{\Gamma_1\cup\Gamma_2}}$ one gets the following fundamental statement: if $\Gamma_1$, $\Gamma_2$ are two curves geometrically linked, then
\[
0\longrightarrow \omega_{\Gamma_1}\longrightarrow\omega_{\Gamma_1\cup \Gamma_2}\longrightarrow \mathcal{O}_{\Gamma_2}\otimes\omega_{\Gamma_1\cup \Gamma_2}\longrightarrow 0.
\]

\medskip

\begin{lem}\label{Lem:liaison}
Let $\psi$ be a rational map of $\mathbb{P}^3_\mathbb{C}$ of degree $3$. We have
\[
\omega_{\mathcal{C}_1\cup \mathcal{C}_2}=\mathcal{O}_{\mathcal{C}_1\cup \mathcal{C}_2}(2h),
\]
where $h$ denotes an hyperplane of $\mathbb{P}^3_\mathbb{C}$, and for $i\in\{1,\,2\}$
\begin{equation}\label{eq:blabla}
0\longrightarrow \omega_{\mathcal{C}_i}\longrightarrow\mathcal{O}_{\mathcal{C}_i\cup\mathcal{C}_{3-i}}(2h)\longrightarrow\mathcal{O}_{\mathcal{C}_{3-i}}(2h)\longrightarrow 0
\end{equation}
and
\begin{equation}\label{eq:blabla2}
0\longrightarrow \mathcal{I}_{\mathcal{C}_1\cup\mathcal{C}_2}(3h)\longrightarrow\mathcal{I}_{\mathcal{C}_i}(3h)\longrightarrow\omega_{\mathcal{C}_{3-i}}(h)\longrightarrow 0
\end{equation}
\end{lem}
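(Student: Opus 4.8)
The plan is to deduce all three assertions from the fundamental liaison sequence recalled just before the statement, once the dualizing sheaf of the complete intersection $\mathcal{C}_1\cup\mathcal{C}_2$ has been identified.

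First I would compute $\omega_{\mathcal{C}_1\cup\mathcal{C}_2}$. Since $\psi$ has degree $3$, for a generic line $\ell=h_1\cap h_2$ the curve $\Gamma_\ell=\mathcal{C}_1\cup\mathcal{C}_2$ is the scheme-theoretic complete intersection of the two cubic surfaces $\psi^\ast h_1$ and $\psi^\ast h_2$. By adjunction for a complete intersection of two surfaces of degrees $d_1,d_2$ in $\mathbb{P}^3_\mathbb{C}$ one has $\omega=\mathcal{O}(d_1+d_2-4)$; with $d_1=d_2=3$ this gives $\omega_{\mathcal{C}_1\cup\mathcal{C}_2}=\mathcal{O}_{\mathcal{C}_1\cup\mathcal{C}_2}(2h)$, which is the first claim.

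Next, to obtain \eqref{eq:blabla}, I would apply the fundamental sequence $0\to\omega_{\Gamma_1}\to\omega_{\Gamma_1\cup\Gamma_2}\to\mathcal{O}_{\Gamma_2}\otimes\omega_{\Gamma_1\cup\Gamma_2}\to0$ to $\Gamma_1=\mathcal{C}_i$ and $\Gamma_2=\mathcal{C}_{3-i}$. These two curves are geometrically linked: by construction (Notations \ref{nota:C2}) they have no common component and their union is the complete intersection $\Gamma_\ell$, so the hypotheses of the liaison sequence are satisfied. Substituting $\omega_{\mathcal{C}_1\cup\mathcal{C}_2}=\mathcal{O}_{\mathcal{C}_1\cup\mathcal{C}_2}(2h)$ and $\mathcal{O}_{\mathcal{C}_{3-i}}\otimes\mathcal{O}_{\mathcal{C}_1\cup\mathcal{C}_2}(2h)=\mathcal{O}_{\mathcal{C}_{3-i}}(2h)$ then produces exactly \eqref{eq:blabla}.

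Finally, for \eqref{eq:blabla2} the idea is to read \eqref{eq:blabla} as an identification of the ideal of one component inside the complete intersection. Writing \eqref{eq:blabla} with the roles of $i$ and $3-i$ exchanged exhibits $\omega_{\mathcal{C}_{3-i}}$ as the kernel of the restriction map $\mathcal{O}_{\mathcal{C}_1\cup\mathcal{C}_2}(2h)\to\mathcal{O}_{\mathcal{C}_i}(2h)$. Comparing with the structure sequence $0\to(\mathcal{I}_{\mathcal{C}_i}/\mathcal{I}_{\mathcal{C}_1\cup\mathcal{C}_2})(2h)\to\mathcal{O}_{\mathcal{C}_1\cup\mathcal{C}_2}(2h)\to\mathcal{O}_{\mathcal{C}_i}(2h)\to0$, whose surjection is the same restriction, identifies $\mathcal{I}_{\mathcal{C}_i}/\mathcal{I}_{\mathcal{C}_1\cup\mathcal{C}_2}\cong\omega_{\mathcal{C}_{3-i}}(-2h)$; this is also the content of the Peskine--Szpiro isomorphism $\mathcal{I}_{\Gamma_1}/\mathcal{I}_{\Gamma_1\cup\Gamma_2}=\mathrm{Hom}(\mathcal{O}_{\Gamma_2},\mathcal{O}_{\Gamma_1\cup\Gamma_2})$ recalled above. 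Twisting the inclusion $0\to\mathcal{I}_{\mathcal{C}_1\cup\mathcal{C}_2}\to\mathcal{I}_{\mathcal{C}_i}\to\mathcal{I}_{\mathcal{C}_i}/\mathcal{I}_{\mathcal{C}_1\cup\mathcal{C}_2}\to0$ by $3h$ and inserting this identification yields \eqref{eq:blabla2}. I expect the only genuine points requiring care to be the verification that $\mathcal{C}_i$ and $\mathcal{C}_{3-i}$ are geometrically linked, so that the fundamental sequence applies, together with the consistent bookkeeping of the twists and of the interchange $i\leftrightarrow 3-i$; the matching of the two kernels in the last step is the conceptual crux, but it is immediate once both sequences are written with the same surjection onto $\mathcal{O}_{\mathcal{C}_i}(2h)$.
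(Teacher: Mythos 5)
Your proof is correct and follows exactly the route the paper intends: adjunction for the $(3,3)$ complete intersection $\Gamma_\ell$ gives $\omega_{\mathcal{C}_1\cup\mathcal{C}_2}=\mathcal{O}_{\mathcal{C}_1\cup\mathcal{C}_2}(2h)$, the \textsc{Peskine}--\textsc{Szpiro} fundamental sequence recalled just before the lemma yields \eqref{eq:blabla}, and identifying $\mathcal{I}_{\mathcal{C}_i}/\mathcal{I}_{\mathcal{C}_1\cup\mathcal{C}_2}\cong\omega_{\mathcal{C}_{3-i}}(-2h)$ and twisting by $3h$ gives \eqref{eq:blabla2}, with all indices and twists handled consistently. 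Your bookkeeping even silently corrects the small index slip in the paper's preamble (the kernel of $\mathcal{O}_{\Gamma_1\cup\Gamma_2}\to\mathcal{O}_{\Gamma_2}$ is literally $\mathcal{I}_{\Gamma_2}/\mathcal{I}_{\Gamma_1\cup\Gamma_2}$), so nothing further is needed.
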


\medskip

The first exact sequence $(\ref{eq:blabla})$ directly implies the following equalities ($i\in\{1,\,2\}$)
\[
\mathrm{H}^0\big(\omega_{\mathcal{C}_i}(-h)\big)=\mathrm{H}^0\big(\mathcal{I}_{\mathcal{C}_{3-i}}(h)\big),\qquad \mathrm{H}^0\big(\omega_{\mathcal{C}_i})=\mathrm{H}^0\big(\mathcal{I}_{\mathcal{C}_{3-i}}(2h)\big),
\]
\[
\mathrm{h}^0\omega_{\mathcal{C}_i}(h)+2=\mathrm{h}^0\big(\mathcal{I}_{\mathcal{C}_{3-i}}(3h)\big),\qquad\mathrm{H}^0\big(\omega_{\mathcal{C}_i}(h)\big)=\frac{\mathrm{H}^0\big(\mathcal{I}_{\mathcal{C}_{3-i}}(3h)\big)}{\mathrm{H}^0\big(\mathcal{I}_{\mathcal{C}_1\cup\mathcal{C}_2}(3h)\big)}.
\]

\medskip

\begin{cor}\label{cor:cubics}
Let $\psi$ be a rational map of $\mathbb{P}^3_\mathbb{C}$ of degree $3$. The ideal $\mathcal{I}_{\mathcal{C}_{3-i}}$ is generated by cubics if and only if $\omega_{\mathcal{C}_i}(h)$ is globally generated.
\end{cor}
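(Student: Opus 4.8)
<br>

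The plan is to exploit the liaison exact sequence (\ref{eq:blabla2}) directly, since it relates the ideal sheaf $\mathcal{I}_{\mathcal{C}_{3-i}}$ to the dualizing sheaf $\omega_{\mathcal{C}_i}$. The key observation is that the statement concerns whether $\mathcal{I}_{\mathcal{C}_{3-i}}$ is \emph{generated by cubics}, which is a statement about the surjectivity of the multiplication map $\mathrm{H}^0\big(\mathcal{I}_{\mathcal{C}_{3-i}}(3h)\big)\otimes\mathrm{H}^0\big(\mathcal{O}_{\mathbb{P}^3_\mathbb{C}}(n-3)\big)\to\mathrm{H}^0\big(\mathcal{I}_{\mathcal{C}_{3-i}}(nh)\big)$ for all $n\geq 3$, or equivalently about the vanishing of appropriate graded pieces. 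I would rewrite this as a statement about global generation of the sheaf $\omega_{\mathcal{C}_i}(h)$ using the last of the displayed cohomology identities following (\ref{eq:blabla}), namely $\mathrm{H}^0\big(\omega_{\mathcal{C}_i}(h)\big)=\mathrm{H}^0\big(\mathcal{I}_{\mathcal{C}_{3-i}}(3h)\big)/\mathrm{H}^0\big(\mathcal{I}_{\mathcal{C}_1\cup\mathcal{C}_2}(3h)\big)$.

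First I would twist the exact sequence (\ref{eq:blabla2}) by $\mathcal{O}(nh)$ for varying $n$ and take global sections, obtaining
\[
0\longrightarrow \mathrm{H}^0\big(\mathcal{I}_{\mathcal{C}_1\cup\mathcal{C}_2}((3+n)h)\big)\longrightarrow\mathrm{H}^0\big(\mathcal{I}_{\mathcal{C}_i}((3+n)h)\big)\longrightarrow\mathrm{H}^0\big(\omega_{\mathcal{C}_{3-i}}((1+n)h)\big).
\]
Since $\mathcal{C}_1\cup\mathcal{C}_2$ is a complete intersection (of a cubic and the surface cutting out the fiber), its ideal is generated by the two defining equations, hence generated in the relevant degrees; the point is therefore to track when the generators of $\mathcal{I}_{\mathcal{C}_{3-i}}$ in degrees higher than $3$ are forced, and these correspond precisely to sections of $\omega_{\mathcal{C}_{3-i}}$ twisted up that fail to be captured by cubics. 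The translation I would make is: $\mathcal{I}_{\mathcal{C}_{3-i}}$ is generated by cubics if and only if the graded module $\bigoplus_n \mathrm{H}^0\big(\omega_{\mathcal{C}_i}((1+n)h)\big)$ is generated in degree $n=0$ as a module over the homogeneous coordinate ring, which is exactly the global generation of $\omega_{\mathcal{C}_i}(h)$ together with the surjectivity of the multiplication maps on its sections.

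The cleanest route is to argue both implications at the level of the commutative diagram relating the multiplication-by-linear-forms maps on the three terms of (\ref{eq:blabla2}). I would set up the diagram whose rows are (\ref{eq:blabla2}) twisted by $nh$ and $(n+1)h$, with vertical maps given by multiplication by a generic linear form, and apply the snake lemma together with the fact that $\mathcal{I}_{\mathcal{C}_1\cup\mathcal{C}_2}$ is generated by its cubic and its defining surface equation. Because the complete-intersection ideal contributes no obstruction beyond its known generators, the failure of $\mathcal{I}_{\mathcal{C}_{3-i}}$ to be generated by cubics is entirely governed by the cokernel of the map into sections of the twists of $\omega_{\mathcal{C}_{3-i}}$, which vanishes precisely when $\omega_{\mathcal{C}_{3-i}}(h)$ — equivalently $\omega_{\mathcal{C}_i}(h)$ after relabeling $i\leftrightarrow 3-i$ — is globally generated.

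The main obstacle I anticipate is the careful bookkeeping needed to ensure that the complete-intersection piece $\mathcal{I}_{\mathcal{C}_1\cup\mathcal{C}_2}$ really contributes nothing to the higher-degree generators; one must verify that the Koszul generators of the complete intersection, being a cubic and the surface equation defining the fiber, suffice to generate $\mathcal{I}_{\mathcal{C}_1\cup\mathcal{C}_2}$ in every degree $\geq 3$, so that the connecting map in the long exact sequence faithfully transmits global generation of $\omega_{\mathcal{C}_i}(h)$ into the generation of $\mathcal{I}_{\mathcal{C}_{3-i}}$ by cubics. Once this is established, both directions follow formally from the exactness of (\ref{eq:blabla2}) and the cohomological dictionary already recorded after (\ref{eq:blabla}).
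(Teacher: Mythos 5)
Your starting point, the liaison sequence (\ref{eq:blabla2}), is exactly the paper's (its entire proof is that the corollary ``directly follows from the exact sequence (\ref{eq:blabla2})''), but your execution has a genuine gap: you silently identify two conditions that are not equivalent in general. You translate ``$\mathcal{I}_{\mathcal{C}_{3-i}}$ is generated by cubics'' into ``the graded module $\bigoplus_n \mathrm{H}^0\big(\omega_{\mathcal{C}_i}((1+n)h)\big)$ is generated in degree $n=0$'', and you yourself record that this amounts to global generation of the sheaf $\omega_{\mathcal{C}_i}(h)$ \emph{together with} surjectivity of the multiplication maps $\mathrm{H}^0\big(\omega_{\mathcal{C}_i}(h)\big)\otimes\mathrm{H}^0\big(\mathcal{O}_{\mathbb{P}^3_\mathbb{C}}(n)\big)\to\mathrm{H}^0\big(\omega_{\mathcal{C}_i}((1+n)h)\big)$; yet in your conclusion the multiplication-map condition has vanished and the relevant cokernel is claimed to vanish ``precisely when'' $\omega_{\mathcal{C}_i}(h)$ is globally generated. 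That extra surjectivity is not a formality: a base-point-free sheaf on a curve need not have surjective multiplication maps, and nothing in your snake-lemma setup with a generic linear form produces it. So the ``if'' direction, in the graded formulation you chose, is unproved.

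The cure is to read ``generated by cubics'' at the sheaf level, i.e.\ to prove that $\mathrm{H}^0\big(\mathcal{I}_{\mathcal{C}_{3-i}}(3h)\big)\otimes\mathcal{O}_{\mathbb{P}^3_\mathbb{C}}\to\mathcal{I}_{\mathcal{C}_{3-i}}(3h)$ is surjective if and only if $\omega_{\mathcal{C}_i}(h)$ is globally generated; this is what the paper uses later (in Theorem \ref{Thm:genre1} the ideal even has the quadric $Q$ among its elements, so no minimal-degree statement is intended). Then no graded modules are needed. Note first that $\mathcal{C}_1\cup\mathcal{C}_2=\Gamma_\ell$ is a complete intersection of \emph{two cubics} --- not, as you write, ``a cubic and the surface cutting out the fiber''; they are the two members of $\Lambda_\psi$ above two planes through $\ell$ --- so $\mathcal{I}_{\mathcal{C}_1\cup\mathcal{C}_2}(3h)$ is globally generated, and since complete intersection curves are arithmetically Cohen--Macaulay, $\mathrm{H}^1\big(\mathcal{I}_{\mathcal{C}_1\cup\mathcal{C}_2}(3h)\big)=0$, whence $\mathrm{H}^0\big(\mathcal{I}_{\mathcal{C}_{3-i}}(3h)\big)\to\mathrm{H}^0\big(\omega_{\mathcal{C}_i}(h)\big)$ is onto (this is the displayed identification after (\ref{eq:blabla})). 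With these two facts both implications are a three-term diagram chase on (\ref{eq:blabla2}): a sheaf quotient of a globally generated sheaf, under a map surjective on $\mathrm{H}^0$, is globally generated (``only if'', which in fact does not even need the $\mathrm{H}^0$-surjectivity); conversely any local section of $\mathcal{I}_{\mathcal{C}_{3-i}}(3h)$ can be corrected by lifts of global sections of $\omega_{\mathcal{C}_i}(h)$ so that the remainder lies in the globally generated subsheaf $\mathcal{I}_{\mathcal{C}_1\cup\mathcal{C}_2}(3h)$ (``if''). Your phrase ``the complete-intersection ideal contributes no obstruction'' gestures at exactly these two ingredients, but your long exact sequence stops at $\mathrm{H}^0\big(\omega_{\mathcal{C}_{3-i}}((1+n)h)\big)$ without the $\mathrm{H}^1$-vanishing that closes it on the right, and without that surjectivity the chase --- in either your graded version or the sheaf version --- does not go through.
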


\begin{proof}
It directly follows from the exact sequence $(\ref{eq:blabla2})$.
\end{proof}

\begin{cor}\label{cor:formule}
Let $\psi$ be a rational map of $\mathbb{P}^3_\mathbb{C}$ of degree $3$. Then 
\[
\deg \mathcal{C}_2-\deg\mathcal{C}_1=\mathfrak{p}_a(\mathcal{C}_2)-\mathfrak{p}_a(\mathcal{C}_1).
\]
\end{cor}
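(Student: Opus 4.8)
The plan is to read off the degree–genus relation directly from the liaison exact sequences of Lemma~\ref{Lem:liaison}, using the fact that $\mathcal{C}_1\cup\mathcal{C}_2$ is a complete intersection of two cubic surfaces. First I would record that since $\mathcal{C}_1\cup\mathcal{C}_2=\Gamma_\ell$ is the complete intersection of type $(3,3)$, its degree is $\deg\mathcal{C}_1+\deg\mathcal{C}_2=9$ and its arithmetic genus is fixed by the complete-intersection formula $2\mathfrak{p}_a(\mathcal{C}_1\cup\mathcal{C}_2)-2=\deg(\mathcal{C}_1\cup\mathcal{C}_2)\cdot(d_1+d_2-4)$ with $d_1=d_2=3$, giving $\mathfrak{p}_a(\mathcal{C}_1\cup\mathcal{C}_2)=10$. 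These two numerical facts about the total complete intersection are the only external input needed.

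Next I would exploit the exact sequence~(\ref{eq:blabla}), namely
\[
0\longrightarrow \omega_{\mathcal{C}_i}\longrightarrow\mathcal{O}_{\mathcal{C}_i\cup\mathcal{C}_{3-i}}(2h)\longrightarrow\mathcal{O}_{\mathcal{C}_{3-i}}(2h)\longrightarrow 0,
\]
to compute Euler characteristics. The middle term has $\chi=\chi(\mathcal{O}_{\mathcal{C}_1\cup\mathcal{C}_2}(2h))$, the right-hand term has $\chi(\mathcal{O}_{\mathcal{C}_{3-i}}(2h))=2\deg\mathcal{C}_{3-i}+1-\mathfrak{p}_a(\mathcal{C}_{3-i})$ by Riemann–Roch on the curve $\mathcal{C}_{3-i}$, and the left-hand term satisfies $\chi(\omega_{\mathcal{C}_i})=\mathfrak{p}_a(\mathcal{C}_i)-1$ by Serre duality. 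The idea is to write the additivity of $\chi$ in this sequence for $i=1$ and for $i=2$, and then subtract the two resulting identities. In the difference the common term $\chi(\mathcal{O}_{\mathcal{C}_1\cup\mathcal{C}_2}(2h))$ cancels, and one is left with an equation relating $\mathfrak{p}_a(\mathcal{C}_1)$, $\mathfrak{p}_a(\mathcal{C}_2)$, $\deg\mathcal{C}_1$ and $\deg\mathcal{C}_2$.

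Carrying out the subtraction, the terms $2\deg\mathcal{C}_i$ and the constants organize exactly into the asserted relation
\[
\deg\mathcal{C}_2-\deg\mathcal{C}_1=\mathfrak{p}_a(\mathcal{C}_2)-\mathfrak{p}_a(\mathcal{C}_1),
\]
so no information about the total genus $10$ or total degree $9$ is even required for the final formula; it simply drops out as the symmetric part of the liaison sequences. I expect the only delicate point to be bookkeeping signs and the constant $+1$ in $\chi(\mathcal{O}_{\mathcal{C}}(2h))$ versus the $-1$ in $\chi(\omega_{\mathcal{C}})$; these must be tracked carefully so that the cancellation is exact. An alternative, purely arithmetic route would be to invoke the general liaison formula of Peskine–Szpiro directly: for two curves linked by a complete intersection of two surfaces of degrees $d_1,d_2$ one has $\mathfrak{p}_a(\mathcal{C}_2)-\mathfrak{p}_a(\mathcal{C}_1)=\tfrac12(d_1+d_2-4)(\deg\mathcal{C}_2-\deg\mathcal{C}_1)$, which for $d_1=d_2=3$ specializes to the claimed equality; I would mention this as a cross-check but derive the statement from the sequences above, since those are the tools already set up in the excerpt.
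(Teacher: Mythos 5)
Your proposal is correct and takes essentially the same approach as the paper: both arguments exploit the liaison sequence~(\ref{eq:blabla}) symmetrically for $i=1,2$ and subtract, so that the common term drops out (in your version $\chi\big(\mathcal{O}_{\mathcal{C}_1\cup\mathcal{C}_2}(2h)\big)$, in the paper's version $\deg(\mathcal{C}_1\cap\mathcal{C}_2)$). The only difference is bookkeeping: you use additivity of Euler characteristics together with Riemann--Roch and Serre duality, whereas the paper restricts the sequence to $\mathcal{C}_i$ to get $\deg\omega_{\mathcal{C}_i}=2\deg\mathcal{C}_i-\deg(\mathcal{C}_1\cap\mathcal{C}_2)$ and then uses $\deg\omega_{\mathcal{C}_i}=2\mathfrak{p}_a(\mathcal{C}_i)-2$ --- a cosmetic variant of the same cancellation, and your observation that the total degree $9$ and genus $10$ are never needed is accurate.
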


\begin{proof}
Taking the restriction of (\ref{eq:blabla}) to $\mathcal{C}_i$ for $i=1$, $2$ gives 
\[
\deg\omega_{\mathcal{C}_i}=2\deg\mathcal{C}_i-\deg(\mathcal{C}_1\cap\mathcal{C}_2),
\]
and hence
\[
\deg \mathcal{C}_2-\deg\mathcal{C}_1=\mathfrak{p}_a(\mathcal{C}_2)-\mathfrak{p}_a(\mathcal{C}_1).
\]
\end{proof}

Furthermore when $\mathcal{C}_1$ and $\mathcal{C}_2$ have no common component, and $\omega_{\mathcal{C}_i}$ is locally free, then $\mathrm{length}\,(\mathcal{C}_1\cap\mathcal{C}_2)=\deg\omega^{\vee}_{\mathcal{C}_i}(2h)$, {\it i.e.}
\[
\sum_{p\in\mathcal{C}_1\cap\mathcal{C}_2}\text{length}(\mathcal{C}_1\cap\mathcal{C}_2)_{\{p\}}=2\deg\mathcal{C}_i-2\mathfrak{p}_a(\mathcal{C}_i)+2.
\]

In the preimage of a generic point of $\mathbb{P}^3_\mathbb{C}$ by $\psi$, the number of points that do not lie in the base locus is given by
\[
3\deg \mathcal{C}_1-\sum_{p\in \mathcal{C}_1\cap \mathcal{C}_2}\mathrm{length}(\mathcal{S}\cap\mathcal{C}_1)_{\{p\}}-\sum_{p\in\Theta}\mathrm{length}(\mathcal{S}\cap\mathcal{C}_1)_{\{p\}}
\]
where $\mathcal{S}\in\Lambda_\psi$ is non-zero modulo $\mathrm{H}^0\big(\mathcal{I}_{\mathcal{C}_1\cup\mathcal{C}_2}(3h)\big)$, and where $\Theta$ denotes the set of irreducible components of dimension $0$ of the base locus $F_\psi$ of $\psi$.

\begin{lem}\label{lem:bir}
Let $\psi$ be a rational map of $\mathbb{P}^3_\mathbb{C}$ of degree $3$. Let $\Theta$ denote the set of irreducible components of dimension $0$ of $F_\psi$. The map $\psi$ is birational if and only if
\[
1=3\deg \mathcal{C}_1-\sum_{p\in \mathcal{C}_1\cap \mathcal{C}_2}\mathrm{length}(\mathcal{S}\cap\mathcal{C}_1)_{\{p\}}-\sum_{p\in\Theta}\mathrm{length}(\mathcal{S}\cap\mathcal{C}_1)_{\{p\}}.
\]
\end{lem}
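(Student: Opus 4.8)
The plan is to recognize that this lemma is essentially a reformulation of the count already carried out in the paragraph immediately preceding the statement, combined with the standard characterization of birationality by generic injectivity. Since we work over $\mathbb{C}$, a dominant rational map $\psi\colon\mathbb{P}^3_\mathbb{C}\dashrightarrow\mathbb{P}^3_\mathbb{C}$ is birational if and only if it has topological degree $1$, that is, the preimage of a generic point consists of a single point lying outside the base locus $F_\psi$. By generic smoothness in characteristic zero the generic fibre is reduced, so each genuine preimage is counted with multiplicity exactly $1$, and the number of genuine preimages therefore equals the topological degree.

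First I would fix a generic point $q$ and realize it as $q=\ell\cap h$, where $\ell$ is a generic line and $h$ a generic hyperplane through $q$. The preimage of $\ell$ is the complete intersection $\Gamma_\ell=\mathcal{C}_1\cup\mathcal{C}_2$ of Notation \ref{nota:C2}, while the preimage of $h$ is a generic cubic surface $\mathcal{S}\in\Lambda_\psi$, which may be taken non-zero modulo $\mathrm{H}^0(\mathcal{I}_{\mathcal{C}_1\cup\mathcal{C}_2}(3h))$ so that $\mathcal{S}$ does not contain $\mathcal{C}_1$. Away from $F_\psi$ one has $\psi^{-1}(q)=\Gamma_\ell\cap\mathcal{S}$. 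Since $\mathcal{C}_2$ is by definition supported on the base locus, every genuine preimage of $q$ lies on $\mathcal{C}_1$; conversely every point of $\mathcal{S}\cap\mathcal{C}_1$ maps to $\ell\cap h=\{q\}$ and hence is either a genuine preimage of $q$ or a base point of $\psi$.

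The key computation is then Bezout applied to the cubic $\mathcal{S}$ and the curve $\mathcal{C}_1$: as $\mathcal{S}$ does not contain $\mathcal{C}_1$, the total intersection has length $3\deg\mathcal{C}_1$. The base points occurring in $\mathcal{S}\cap\mathcal{C}_1$ are exactly those of $\mathcal{C}_1\cap\mathcal{C}_2$ (where $\mathcal{C}_1$ meets the one-dimensional base locus) together with the isolated base points of $\Theta$ lying on $\mathcal{C}_1$; subtracting their local lengths leaves precisely the number of genuine preimages, namely the right-hand side of the asserted formula. By reducedness of the generic fibre each genuine preimage contributes length $1$, so this number equals the topological degree of $\psi$. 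Hence $\psi$ is birational if and only if this number equals $1$, which is the claim.

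I expect the only delicate point to be the bookkeeping of local lengths. One must check that no point of $\mathcal{S}\cap\mathcal{C}_1$ is simultaneously a genuine preimage and a base point, and that the base locus met by $\mathcal{C}_1$ is exhausted by its one-dimensional part (recorded through $\mathcal{C}_1\cap\mathcal{C}_2$) together with the isolated part $\Theta$, with no contribution from embedded points escaping the count. Both points follow from the genericity of $q$, $\ell$, $h$ and $\mathcal{S}$, together with generic smoothness, which guarantees simultaneously that the genuine preimages are simple and that $\mathcal{C}_1$ is not tangent to $\mathcal{S}$ away from the base locus.
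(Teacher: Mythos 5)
Your proof is correct and follows essentially the same route as the paper: the lemma there is a direct consequence of the displayed count immediately preceding it, which states that the number of preimages of a generic point lying outside the base locus is $3\deg\mathcal{C}_1-\sum_{p\in\mathcal{C}_1\cap\mathcal{C}_2}\mathrm{length}(\mathcal{S}\cap\mathcal{C}_1)_{\{p\}}-\sum_{p\in\Theta}\mathrm{length}(\mathcal{S}\cap\mathcal{C}_1)_{\{p\}}$, exactly the B\'ezout-plus-bookkeeping computation you reconstruct. Your added details (writing $q=\ell\cap h$, noting $\mathcal{C}_2\subset F_\psi$ so genuine preimages lie on $\mathcal{C}_1$, and invoking generic smoothness in characteristic zero to get reduced fibres) correctly fill in what the paper leaves implicit.
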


Remark that the computation of $\mathrm{length}(\mathcal{S}\cap\mathcal{C}_1)_{\{p\}}$ depends on the nature of the singularity of the cubic surface and on the behavior of $\mathcal{C}_2$ in that point (\emph{see} \S\ref{Sec:singcubsurf}).

\begin{lem}\label{Lem:notonaplane}
Let $\psi$ be a $(3,d)$ \textsc{Cremona} map. Assume that $d\geq 4$, then $\mathcal{C}_1$ is not contained in a plane.
\end{lem}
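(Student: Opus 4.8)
The plan is to argue by contradiction, exploiting the fact that the two cubics cutting out the complete intersection $\Gamma_\ell$ must both vanish on $\mathcal{C}_1$, and that a cubic cannot contain a plane curve of degree $\geq 4$ without containing the whole plane.

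First I would record the two elementary facts that make the argument run. Since $\psi$ is birational of bidegree $(3,d)$, for a generic line $\ell$ the strict transform $\mathcal{C}_1=\psi^{-1}_*(\ell)$ is a reduced, irreducible curve of degree $d$ (it is the birational image of $\ell$ under $\psi^{-1}$, whose defining forms have degree $d$). Moreover $\Gamma_\ell=\psi^{-1}(\ell)$ is the scheme-theoretic complete intersection of the two cubics $A=\psi^{\ast}H_1$ and $B=\psi^{\ast}H_2$, where $H_1,H_2$ are generic hyperplanes with $\ell=H_1\cap H_2$; these $A,B$ lie in $\Lambda_\psi$, have no common factor, and for generic $\ell$ the intersection $\Gamma_\ell$ is a genuine curve, so $\dim\Gamma_\ell=1$. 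By construction $\mathcal{C}_1\subseteq\Gamma_\ell$.

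Next, suppose for contradiction that $\mathcal{C}_1$ is contained in a plane $P=\{L=0\}$. The cubics $A$ and $B$ vanish on $\Gamma_\ell$, hence on $\mathcal{C}_1$. Restricting to $P\cong\mathbb{P}^2_\mathbb{C}$, the forms $A_{|P}$ and $B_{|P}$ are cubic forms on $P$ vanishing on the reduced degree-$d$ plane curve $\mathcal{C}_1$. Since $\mathcal{C}_1$ is reduced, its ideal in $P$ is generated by a squarefree form of degree $d$; as $d\geq 4>3$, no nonzero cubic on $P$ can be divisible by this form, so $A_{|P}=B_{|P}=0$. Equivalently, $L$ divides both $A$ and $B$.

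Finally, $L\mid A$ and $L\mid B$ force $P=\{L=0\}\subseteq V(A)\cap V(B)=\Gamma_\ell$, so $\Gamma_\ell$ contains a plane and has dimension $\geq 2$ (equivalently, $A$ and $B$ share the common factor $L$, impossible for a complete intersection curve). This contradicts $\dim\Gamma_\ell=1$, and therefore $\mathcal{C}_1$ is not contained in a plane. The hypothesis $d\geq 4$ is used precisely at the restriction step, and for $d\leq 3$ the argument breaks down, as expected. The only points requiring genuine care are the genericity claims of the first step, namely that for generic $\ell$ the curve $\mathcal{C}_1$ is reduced of degree exactly $d$ and that $\Gamma_\ell$ is purely one-dimensional; I expect verifying these (routine but necessary) to be the main, if minor, obstacle.
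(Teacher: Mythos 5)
Your proof is correct, and it reaches the contradiction by a slightly different mechanism than the paper's. Both arguments rest on the same B\'ezout-type principle --- a cubic surface cannot contain a reduced plane curve of degree $d\geq 4$ without containing the whole plane --- but the paper implements it with a single surface: it takes an \emph{irreducible} cubic $\mathcal{S}\in\Lambda_\psi$ containing $\mathcal{C}_1$ and observes that every line of the plane $\mathcal{P}\supset\mathcal{C}_1$ would be quadrisecant to $\mathcal{S}$, hence contained in $\mathcal{S}$ by B\'ezout, so $\mathcal{P}\subset\mathcal{S}$, contradicting irreducibility. You instead restrict \emph{two} cubics $A,B$ cutting out $\Gamma_\ell$ to the plane, use that the ideal of a reduced degree-$d$ curve in $\mathbb{P}^2$ is principal to force $L\mid A$ and $L\mid B$, and contradict $\dim\Gamma_\ell=1$. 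Your version buys a small economy: you never need to know that $\Lambda_\psi$ contains an irreducible member (a fact the paper uses without comment), only that $\Gamma_\ell$ is a one-dimensional complete intersection --- which is exactly what Notation \ref{nota:C2} already records, and which also disposes of the genericity points you flag at the end as the ``main obstacle'': for a dominant $\psi$, the set $V(A,B)$ is the union of the base locus and the closure of the honest preimage of $\ell$, both of dimension $\leq 1$, so $A$ and $B$ indeed have no common factor, and a generic line avoids the codimension-$\geq 2$ base locus of $\psi^{-1}$, giving $\mathcal{C}_1$ reduced of degree exactly $d$. The paper's proof is shorter once irreducibility of a general member of $\Lambda_\psi$ is granted; yours is marginally more self-contained.
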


\begin{proof}
Suppose for example that $d=4$; then $\mathcal{C}_1$ is contained in an irreducible cubic surface~$\mathcal{S}$. If~$\mathcal{C}_1$ is contained in a plane $\mathcal{P}$ then all the lines in $\mathcal{P}$ are quadrisecant to $\mathcal{S}$: contradiction with the irreducibility of $\mathcal{S}$.
\end{proof}

\begin{lem}\label{Lem:psurc2}
Let $\psi$ be a $(3,d)$ birational map, and let $p$ be a point on $\mathcal{C}_1$. Assume that the degree of the tangent cone of $\mathcal{C}_1$ at $p$ is strictly less than $4$. If any $\mathcal{S}$ in $\Lambda_\psi$ is singular at $p$, then $p$ belongs to $\mathcal{C}_2$.
\end{lem}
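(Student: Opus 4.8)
The plan is to argue by contradiction: assuming $p\notin\mathcal{C}_2$, I will compute the multiplicity of the complete intersection curve $\Gamma_\ell$ at $p$ in two ways and reach a contradiction. Recall from Notations~\ref{nota:C2} that for a generic line $\ell$ the scheme $\Gamma_\ell=\mathcal{C}_1\cup\mathcal{C}_2$ is the complete intersection of two cubic surfaces $\mathcal{S}_1,\mathcal{S}_2\in\Lambda_\psi$ (the pull-backs of two generic planes containing $\ell$), and that $\mathcal{C}_1=\psi^{-1}_*(\ell)$ is reduced and occurs with multiplicity one in $\Gamma_\ell$. Since every surface of $\Lambda_\psi$ is singular at $p$ by hypothesis, we have in particular $\mathrm{mult}_p\,\mathcal{S}_1\geq 2$ and $\mathrm{mult}_p\,\mathcal{S}_2\geq 2$.

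First I would bound $\mathrm{mult}_p\,\Gamma_\ell$ from below. Pick a generic hyperplane $H$ through $p$, so that $\mathcal{S}_1$, $\mathcal{S}_2$ and $H$ meet only in an isolated point at $p$ (no component of $\Gamma_\ell$ through $p$ lying in $H$). Writing $f,g,h$ for local equations at $p$, the local intersection number is
\[
i_p(\mathcal{S}_1,\mathcal{S}_2,H)=\dim_\mathbb{C}\mathcal{O}_{\mathbb{P}^3_\mathbb{C},p}/(f,g,h)=\mathrm{length}\big(\mathcal{O}_{\Gamma_\ell,p}/(h)\big)=\mathrm{mult}_p\,\Gamma_\ell,
\]
the last equality because the multiplicity of a curve at a point equals its intersection number with a generic hyperplane through that point. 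The classical local B\'ezout inequality, namely that the intersection multiplicity of hypersurfaces at an isolated common point is at least the product of their multiplicities there, then gives
\[
\mathrm{mult}_p\,\Gamma_\ell=i_p(\mathcal{S}_1,\mathcal{S}_2,H)\geq\mathrm{mult}_p(\mathcal{S}_1)\cdot\mathrm{mult}_p(\mathcal{S}_2)\cdot\mathrm{mult}_p(H)\geq 2\cdot 2\cdot 1=4.
\]

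On the other hand, if $p\notin\mathcal{C}_2$ then the local ring $\mathcal{O}_{\Gamma_\ell,p}$ sees only the component $\mathcal{C}_1$, which is reduced at $p$; hence $\mathrm{mult}_p\,\Gamma_\ell=\mathrm{mult}_p\,\mathcal{C}_1$, and this is exactly the degree of the tangent cone of $\mathcal{C}_1$ at $p$, which is $<4$ by hypothesis. This contradicts the bound above, so $p$ must lie on $\mathcal{C}_2$. I expect the delicate point to be the middle step: one must ensure the triple intersection is genuinely supported at the isolated point $p$ (whence the genericity of $H$) in order to apply the local B\'ezout multiplicity inequality legitimately, and to identify the resulting length with the curve multiplicity $\mathrm{mult}_p\,\Gamma_\ell$. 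Once these are in place, the singularity of the whole system $\Lambda_\psi$ at $p$ forces enough multiplicity into $\Gamma_\ell$ that, $\mathcal{C}_1$ being too mildly singular to absorb it all, the surplus must be carried by $\mathcal{C}_2$.
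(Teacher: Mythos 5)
Your proof is correct and takes essentially the same route as the paper: the paper's one-line argument likewise notes that since every $\mathcal{S}\in\Lambda_\psi$ is singular at $p$, the complete intersection $\Gamma_\ell$ has tangent cone of degree at least $4=2\cdot 2$ at $p$, which exceeds the degree $<4$ of the tangent cone of $\mathcal{C}_1$, forcing $p\in\mathcal{C}_2$. You merely make explicit, via the generic hyperplane section and the local B\'ezout multiplicity inequality, the standard fact about multiplicities of complete intersections that the paper leaves implicit.
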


\begin{proof}
If any $\mathcal{S}$ in $\Lambda_\psi$ is singular at $p$, then the degree of the tangent cone of the complete intersection $\mathcal{C}_1\cap\mathcal{C}_2$ at $p$ is at least $4$ so $p$ has to belong to $\mathcal{C}_2$.
\end{proof}

\begin{lem}\label{Lem:indpt}
Let $\psi$ be a non-ruled $(3,d)$ birational map, and let $\mathcal{C}_1$ be a general element of $\Lambda_\psi$. The support of $\mathrm{Sing}\,\mathcal{C}_1$ is independent of the choice of $\mathcal{C}_1$.
\end{lem}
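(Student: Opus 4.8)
The plan is to pin down $\mathrm{Sing}\,\mathcal{C}_1$ by exploiting that, for a generic line $\ell$, the restriction $\psi\vert_{\mathcal{C}_1}\colon\mathcal{C}_1\to\ell$ is birational onto the smooth rational curve $\ell$. First I would observe that since $\psi$ is birational and $\ell\cong\mathbb{P}^1$ is smooth, the map $\psi^{-1}\vert_\ell\colon\ell\to\mathcal{C}_1$ (a morphism, as a generic $\ell$ avoids the base locus of $\psi^{-1}$) is birational of degree $1$, hence is precisely the normalization of $\mathcal{C}_1$. Consequently the singular points of $\mathcal{C}_1$ are exactly the points over which this normalization fails to be injective or fails to be an immersion. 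Wherever $\mathcal{C}_1$ leaves the base locus $F_\psi$, the map $\psi$ restricts to a local isomorphism onto $\ell$, so there $\psi^{-1}\vert_\ell$ is a local isomorphism and $\mathcal{C}_1$ is smooth; hence $\mathrm{Sing}\,\mathcal{C}_1\subset F_\psi$, a scheme that does not depend on $\ell$.

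Next I would use that $\psi$ is \emph{not} ruled: by the characterisation of \S\ref{Sec:defnot}, $F_\psi$ has at most isolated singularities, so the locus responsible for the singularities of $\mathcal{C}_1$ consists of finitely many fixed points and not of a curve along which a singular point could travel. I would then identify the non-injective locus of $\psi^{-1}\vert_\ell$ with the set of points $p$ to which $\psi^{-1}$ contracts a surface $R_p$: indeed $\psi^{-1}(x)=p$ for two distinct $x\in\ell$ forces $p\in F_\psi$ and $x\in R_p$. Such a $p$ is intrinsic to $\psi$ (it is a double point of $\Lambda_\psi$, so by Lemma~\ref{Lem:psurc2} it lies on $\mathcal{C}_2$), and because \emph{every} line meets the surface $R_p$ in $\deg R_p\geq 2$ points all mapping to $p$, the point $p$ belongs to $\mathrm{Sing}\,\mathcal{C}_1$ for every generic $\ell$. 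These points form a fixed finite set.

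It then remains to exclude extra, possibly moving, singularities. A generic line $\ell$ meets the ramification surface of $\psi^{-1}$ transversally and in directions avoiding the fixed kernel of $\mathrm{d}\psi^{-1}$ along it, so $\psi^{-1}\vert_\ell$ stays an immersion and produces no cusp; likewise the moving intersection points of $\mathcal{C}_1\cap\mathcal{C}_2$ are, for generic $\ell$, smooth points of $\mathcal{C}_1$ where $\mathcal{C}_1$ merely crosses the fixed curve $\mathcal{C}_2$. Equivalently, and more in the spirit of the liaison computations above, at each fixed point $p\in F_\psi$ the analytic type of $\mathcal{C}_1$ is governed by the tangent cone of $\mathcal{C}_2$ at $p$ together with the tangent cones at $p$ of the two \emph{generic} members of $\Lambda_\psi$ cutting out $\Gamma_\ell$; since these generic tangent cones and the fixed curve $\mathcal{C}_2$ do not depend on the particular generic $\ell$, neither does the multiplicity of $\mathcal{C}_1$ at $p$, hence neither does the condition that $p$ be singular. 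Combining the two analyses shows that $\mathrm{Supp}(\mathrm{Sing}\,\mathcal{C}_1)$ equals this fixed finite set, independently of the choice of $\mathcal{C}_1$.

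The step I expect to be the main obstacle is the last one: excluding singularities that could \emph{move} with $\ell$. Everything hinges on the non-ruled hypothesis, which through the isolated-singularities property confines the dangerous locus to finitely many fixed points and prevents a singular point of $\mathcal{C}_1$ from sliding along a contracted curve — the phenomenon that does occur for ruled maps, where $\mathrm{Sing}\,\mathcal{C}_1$ can run along the double line $\delta$ of Lemma~\ref{Lem:ruled}. Making the genericity of $\ell$ interact correctly with the tangent-cone and liaison data at each fixed base point, so as to certify a constant local type, is the delicate bookkeeping the argument must carry out.
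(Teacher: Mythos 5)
Your route (reading $\psi^{-1}\vert_\ell$ as the normalization of $\mathcal{C}_1$ and locating singularities through failures of injectivity or immersion) is genuinely different from the paper's, which fixes one member $\mathcal{S}\in\Lambda_\psi$ with finite singular locus --- this is the only place non-ruledness is used --- passes to a minimal desingularization $\pi\colon\widetilde{\mathcal{S}}\to\mathcal{S}$, and applies \textsc{Bertini} to the induced linear system on $\widetilde{\mathcal{S}}$, whose base locus $\Omega$ is finite; this yields $\mathrm{Sing}\,\mathcal{C}_1\subset\pi(\Omega)\cup\mathrm{Sing}\,\mathcal{S}$, a finite set not depending on $\ell$, and the lemma follows. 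Your version, however, has a concrete false step: the claim that every surface $R_p$ contracted to a point has $\deg R_p\geq 2$, so that each such $p$ lies in $\mathrm{Sing}\,\mathcal{C}_1$ for every generic $\ell$. A plane can be contracted to a point: the involution $(z_1z_2z_3:z_0z_2z_3:z_0z_1z_3:z_0z_1z_2)$ is a non-ruled $(3,3)$ map (its generic member is a four-nodal cubic, with isolated singularities), $\psi^{-1}=\psi$ contracts the four coordinate planes to the four coordinate points, yet $\mathcal{C}_1$ is a \emph{smooth} twisted cubic passing through those four points. So your concluding equality ``$\mathrm{Supp}(\mathrm{Sing}\,\mathcal{C}_1)$ equals this fixed finite set'' is wrong; only the containment in a fixed finite set can be true, and indeed that containment (plus the remark that, the candidate set being finite and fixed, the condition that $\mathcal{C}_1$ be singular at a given candidate point is constructible in $\ell$, hence constant for general $\ell$) is all the lemma needs. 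For the same reason your blanket assertion that a generic $\ell$ ``produces no cusp'' is unjustified: unibranch singularities of $\mathcal{C}_1$ at a fixed base point do occur (e.g.\ in the double-point-of-contact strata), precisely where the differential of $\psi^{-1}$ degenerates along a contracted surface; they are harmless for the lemma but contradict your characterization.

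The deeper gap is the step you yourself flag as the main obstacle. Purity of the exceptional locus only places non-injective pairs of $\psi^{-1}\vert_\ell$ on the critical divisor of $\psi^{-1}$, whose components may be contracted to \emph{curves}; two points of $\ell$ lying in one fibre of such a surface-to-curve contraction would produce exactly the moving singular point that ruled maps exhibit along $\delta$. Your second paragraph silently assumes this away (``$\psi^{-1}(x)=p$ for two distinct $x\in\ell$ forces $x\in R_p$''), and the appeal to the non-ruled hypothesis is a non sequitur as written: ``the generic member of $\Lambda_\psi$ has isolated singularities'' is a statement about the cubic \emph{surfaces}, and you never connect it to the fibration structure of the surfaces contracted by $\psi^{-1}$, which is what a dimension count on secants of the fibres would have to rule out. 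Similarly, the closing ``constant local type via tangent cones'' paragraph is a sketch, not a proof: the multiplicity of $\mathcal{C}_1$ at a fixed point is not determined by the tangent cones of $\mathcal{C}_2$ and of two generic members of $\Lambda_\psi$ alone (and no such determination is needed once one has containment in a fixed finite set). The paper's Bertini argument on $\widetilde{\mathcal{S}}$ disposes of all moving singularities in one stroke, which is what your argument still has to supply.
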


\begin{proof}
Let us show that there is a singular point independent of the choice of $\mathcal{C}_1$. Let us consider an element $\mathcal{S}$ of $\Lambda_\psi$ with finite singular locus. Let $\pi\colon\widetilde{\mathcal{S}}\to\mathcal{S}$ be a minimal desingularization of~$\mathcal{S}$, and let $\widetilde{\mathcal{C}_1}$ be the strict transform of $\mathcal{C}_1$. The elements of~$\Lambda_\psi$ give a linear system in $\vert\mathcal{O}_\mathcal{\widetilde{S}}(\widetilde{\mathcal{C}_1})\vert$ whose base locus denoted $\Omega$ is finite. According to \textsc{Bertini}'s theorem applied on $\widetilde{\mathcal{S}}$ one has the inclusion $\mathrm{Sing}\,\mathcal{C}_1\subset\pi(\Omega)\cup\mathrm{Sing}\,\mathcal{S}$. The first assertion thus follows from the fact that $\Omega\cup\mathrm{Sing}\,\mathcal{S}$ is finite.
\end{proof}

\begin{thm}\label{Thm:genre1}
Let $\psi$ be a $(3,d)$ birational map, $2\leq d\leq 6$, that is not ruled. Assume that $\mathfrak{p}_a(\mathcal{C}_1)=~1$. Then
\begin{itemize}
\item[$\bullet$] there exists a singular point $p$ of $\mathcal{C}_1$ independent of the choice of $\mathcal{C}_1$;

\item[$\bullet$] if $d\leq 4$, all the cubic surfaces of the linear system $\Lambda_\psi$ are singular at $p$;

\item[$\bullet$] the curve $\mathcal{C}_2$ is of degree $9-d$, of arithmetic genus $10-2d$, and lies on a unique quadric~$Q$; more precisely $\mathcal{I}_{\mathcal{C}_2}=(Q,\mathcal{S}_1,\ldots,\mathcal{S}_{d-2})$ where the $\mathcal{S}_i$'s are independent cubics modulo $Q$.
\end{itemize}
\end{thm}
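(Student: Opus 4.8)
The plan is to read off the numerics of $\mathcal{C}_2$ directly from liaison, to produce the distinguished singular point $p$ from the rationality of $\mathcal{C}_1$, to get the structure of $\mathcal{I}_{\mathcal{C}_2}$ from the cohomological identities following (\ref{eq:blabla}), and finally to treat the double point assertion by a local analysis at $p$, which I expect to be the delicate point.

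First, the degree and genus of $\mathcal{C}_2$ are essentially free. The relation $(\deg\psi)^2=\deg\psi^{-1}+\deg\mathcal{C}_2$ gives $\deg\mathcal{C}_2=9-d$, and Corollary~\ref{cor:formule}, applied to $\deg\mathcal{C}_2-\deg\mathcal{C}_1=(9-d)-d=9-2d$ together with $\mathfrak{p}_a(\mathcal{C}_1)=1$, yields $\mathfrak{p}_a(\mathcal{C}_2)=10-2d$. For the singular point, I use that $\mathcal{C}_1=\psi^{-1}_*(\ell)$ is the strict transform of a general line under the birational map $\psi^{-1}$, hence an integral curve birational to $\ell\cong\mathbb{P}^1_\mathbb{C}$; its geometric genus is $0$. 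Since $\mathfrak{p}_a(\mathcal{C}_1)=1>0$, the curve $\mathcal{C}_1$ must be singular, with a unique singular point of $\delta$-invariant $1$ (a node or a cusp). By Lemma~\ref{Lem:indpt} the support of $\mathrm{Sing}\,\mathcal{C}_1$ is independent of the choice of $\mathcal{C}_1$, so this point is a well-defined $p$; as it lies on $\psi^{-1}_*(\ell)$ for every $\ell$, it is a base point of $\psi$.

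For the structure of $\mathcal{C}_2$, I exploit that $\mathcal{C}_1$ is Gorenstein of arithmetic genus $1$, so $\omega_{\mathcal{C}_1}\cong\mathcal{O}_{\mathcal{C}_1}$; hence $\mathrm{h}^0(\omega_{\mathcal{C}_1})=1$ and, by Riemann--Roch with $\mathrm{h}^1$ vanishing by Serre duality, $\mathrm{h}^0(\omega_{\mathcal{C}_1}(h))=\mathrm{h}^0(\mathcal{O}_{\mathcal{C}_1}(h))=d$. The identities following (\ref{eq:blabla}) then give $\mathrm{h}^0(\mathcal{I}_{\mathcal{C}_2}(2h))=\mathrm{h}^0(\omega_{\mathcal{C}_1})=1$, i.e. a unique quadric $Q$ through $\mathcal{C}_2$, and $\mathrm{h}^0(\mathcal{I}_{\mathcal{C}_2}(3h))=\mathrm{h}^0(\omega_{\mathcal{C}_1}(h))+2=d+2$. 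Subtracting the $4$-dimensional space $Q\cdot\langle z_0,z_1,z_2,z_3\rangle$ leaves $d-2$ cubics independent modulo $Q$. Since $\omega_{\mathcal{C}_1}(h)=\mathcal{O}_{\mathcal{C}_1}(h)$ is globally generated, Corollary~\ref{cor:cubics} shows $\mathcal{I}_{\mathcal{C}_2}$ is generated by its cubics; adjoining $Q$, which recovers the $Qz_i$, gives $\mathcal{I}_{\mathcal{C}_2}=(Q,\mathcal{S}_1,\ldots,\mathcal{S}_{d-2})$.

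Finally, the double point assertion, which I expect to be the main obstacle. Consider the differential map $L\colon\Lambda_\psi\to T_p^*\mathbb{P}^3_\mathbb{C}$, $f\mapsto d_pf$ (well defined as every cubic of $\Lambda_\psi$ vanishes at the base point $p$); the cubics singular at $p$ are exactly $\ker L$, and I must prove $L=0$ when $d\le 4$. Since $\dim\Lambda_\psi=4>3=\dim T_p^*\mathbb{P}^3_\mathbb{C}$, one has $\ker L\ne 0$, so some member is singular at $p$; the tangent cone of $\mathcal{C}_1$ at $p$ having degree $2<4$, Lemma~\ref{Lem:psurc2} yields $p\in\mathcal{C}_2$, whence $\mathrm{mult}_p\Gamma_\ell=\mathrm{mult}_p\mathcal{C}_1+\mathrm{mult}_p\mathcal{C}_2\ge 2+1=3$. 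If $\mathrm{rank}\,L\ge 2$, two generic members would be smooth at $p$ with distinct tangent planes, forcing $\Gamma_\ell=\mathcal{S}\cap\mathcal{S}'$ to be smooth at $p$ and contradicting that $\mathcal{C}_1\subset\Gamma_\ell$ is singular there; hence $\mathrm{rank}\,L\le 1$. The residual case $\mathrm{rank}\,L=1$ means the smooth members share a tangent plane $T$ at $p$, in which the generic $\Gamma_\ell$ carries multiplicity exactly $2$ unless these members also share their $2$-jet along $T$. This is where I expect $d\le 4$ to enter: the space $\mathrm{H}^0(\mathcal{I}_{\mathcal{C}_2}(3h))$ has dimension $d+2\le 6$, which I anticipate is too small to allow $\Lambda_\psi$ to contain a member smooth at $p$ while still producing $\mathrm{mult}_p\Gamma_\ell\ge 3$, so that $\mathrm{rank}\,L=0$ and every cubic of $\Lambda_\psi$ is singular at $p$. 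Making this count precise, namely bounding the cubics of $\mathcal{I}_{\mathcal{C}_2}$ with prescribed $2$-jet at $p$, is the crux; the numerics above and the quadric $Q$ through $p$ should render it tractable, the threshold $d=4$ reflecting the jump in the number $d-2$ of cubic generators.
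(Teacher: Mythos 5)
Your treatment of the first and third bullets is correct and is essentially the paper's own argument: the same liaison identities (the uniqueness of $Q$ from $\mathrm{H}^0(\omega_{\mathcal{C}_1})=\mathrm{H}^0(\mathcal{I}_{\mathcal{C}_2}(2h))$ with $\omega_{\mathcal{C}_1}\cong\mathcal{O}_{\mathcal{C}_1}$), the same count $\mathrm{h}^0\mathcal{I}_{\mathcal{C}_2}(3h)=d+2$ (the paper reaches it via $\mathrm{h}^0\mathcal{O}_{\mathcal{C}_2}(3h)=18-d$, you via $\mathrm{h}^0\omega_{\mathcal{C}_1}(h)+2$; these are the same identity), the same use of Corollaries \ref{cor:formule} and \ref{cor:cubics}, and Lemma \ref{Lem:indpt} for the distinguished point; your explicit remark that geometric genus $0$ together with $\mathfrak{p}_a(\mathcal{C}_1)=1$ forces a unique double point is a useful detail the paper leaves implicit.

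The genuine gap is the second bullet, which you never actually prove. Your reduction to $\mathrm{rank}\,L\le 1$ is sound, but the rank-one case (a point of contact at $p$) is precisely the content of the assertion, and your sketch stops at an unexecuted ``count of cubics of $\mathcal{I}_{\mathcal{C}_2}$ with prescribed $2$-jet at $p$''; worse, that is probably the wrong mechanism, since $\mathrm{h}^0\mathcal{I}_{\mathcal{C}_2}(3h)$ only moves from $6$ to $7$ between $d=4$ and $d=5$ and no jet count on this space is carried out anywhere. The mechanism that does work, and which is implicit in the paper's treatment of case $b_1)$ in \S\ref{Sec:cuboquintic}, is a degree count on the normalization $\pi\colon\mathbb{P}^1_\mathbb{C}\to\mathcal{C}_1$: the residual pencil $u=\Lambda_\psi/L\subset\mathrm{H}^0\big(\omega_{\mathcal{C}_1}(h)\big)\cong\mathrm{H}^0\big(\mathcal{O}_{\mathbb{P}^1_\mathbb{C}}(d)\big)$ must induce a degree-one map of $\mathbb{P}^1_\mathbb{C}$, because $\psi$ restricted to $\mathcal{C}_1$ is birational onto $\ell$; if some member of $\Lambda_\psi$ were smooth at $p$, the contact condition forces the base locus of $u$ to contain $2\pi^{-1}(p)$, of length $4$ since $p$ is a double point of $\mathcal{C}_1$, leaving a map of degree at most $d-4\le 0$ for $d\le 4$ --- a contradiction; at $d=5$ the same count gives degree exactly $1$, which is why the family $\mathcal{E}_{23}$ exists and the second bullet fails there, as the paper's remark after the theorem records. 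A secondary flaw inside the incomplete part: you invoke Lemma \ref{Lem:psurc2} knowing only that \emph{one} member of $\Lambda_\psi$ is singular at $p$, but its proof (degree $\ge 4$ for the tangent cone of the complete intersection) requires \emph{all} members, i.e.\ the two generic cubics cutting $\Gamma_\ell$, to be singular at $p$. (To be fair, the paper's displayed proof of Theorem \ref{Thm:genre1} is itself silent on the second bullet; but as a standalone proof yours does not establish it either.)
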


\begin{rem}
As soon as $d=5$ the second assertion is not true. Indeed for $d=5$ we obtain two families: one for which all the elements of $\Lambda_\psi$ are singular, and another one for which it is not the case~(\S\ref{Sec:cuboquintic}).
\end{rem}

\begin{proof}
The first assertion directly follows from Lemma \ref{Lem:indpt}.\smallskip

 Since $\mathfrak{p}_a(\mathcal{C}_1)=1$, the curve $\mathcal{C}_2$ lies on a unique quadric $\mathcal{Q}$. The arithmetic genus of $\mathcal{C}_2$ is obtained from $\deg\mathcal{C}_2-\deg\mathcal{C}_1=\mathfrak{p}_a(\mathcal{C}_2)-\mathfrak{p}_a(\mathcal{C}_1)$ (Corollary \ref{cor:formule}). \smallskip

As $\mathfrak{p}_a(\mathcal{C}_1)=1$, $\omega_{\mathcal{C}_1}(h)$ has no base point, and $\mathcal{I}_{\mathcal{C}_2}$ is generated by cubics (Corollary \ref{cor:cubics}). The number of cubics containing $\mathcal{C}_2$ independent modulo the multiple of $Q$ is $d-2$: the liaison sequence (Lemma \ref{Lem:liaison}) becomes
\[
0\longrightarrow\mathcal{O}_{\mathcal{C}_1}(h)\longrightarrow\mathcal{O}_{\mathcal{C}_1\cup\mathcal{C}_2}(3h)\longrightarrow\mathcal{O}_{\mathcal{C}_2}(3h)\longrightarrow 0
\]
one gets that
\[
\mathrm{h}^0\mathcal{O}_{\mathcal{C}_2}(3h)=\mathrm{h}^0\mathcal{O}_{\mathcal{C}_1\cup \mathcal{C}_2}(3h)-\mathrm{h}^0\mathcal{O}_{\mathcal{C}_1}(h)=18-d.
\]
This implies that
\[
\mathrm{h}^0\mathcal{I}_{\mathcal{C}_2}(3h)=20-\mathrm{h}^0\mathcal{O}_{\mathcal{C}_2}(3h)=d+2.
\]
If we put away the four multiples of $Q$ one obtains $d+2-4=d-2$ cubics, and finally $\mathcal{I}_{\mathcal{C}_2}=(Q,\mathcal{S}_1,\ldots,\mathcal{S}_{d-2})$.
\end{proof}

Corollary \ref{cor:formule} and Theorem \ref{Thm:genre1} imply Proposition \ref{propB}.

\begin{pro}\label{Pro:compirr}
For $2\leq d\leq 5$ the set $\mathfrak{ruled}_{3,d}$ is an irreducible component of $\mathrm{Bir}_{3,d}(\mathbb{P}^3_\mathbb{C})$.
\end{pro}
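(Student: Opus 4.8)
The plan is to use that $\mathfrak{ruled}_{3,d}$ is already known to be irreducible (Lemma~\ref{Lem:ruled}), so that only maximality remains: a general element of $\mathfrak{ruled}_{3,d}$ must not lie on any strictly larger irreducible subset of $\mathrm{Bir}_{3,d}(\mathbb{P}^3_\mathbb{C})$. First I would record that $\mathfrak{ruled}_{3,d}=\{\psi\,\vert\,\mathfrak{g}_\psi=0\}$ is closed: the genus $\mathfrak{g}_\psi$ is the geometric genus of the curve $h\cap\psi^{-1}(h')$, which is lower semicontinuous in families, so its vanishing is a closed condition (this is the characterization proved just before Section~\ref{Sec:galfact}). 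Hence the locus of non-ruled maps is open, and, $\mathfrak{ruled}_{3,d}$ being irreducible and closed, it is an irreducible component exactly when a general ruled map $\psi_0$ is not a limit of non-ruled $(3,d)$ birational maps.

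The second step is a dimension count for $\mathfrak{ruled}_{3,d}$ based on the explicit base locus of Lemma~\ref{Lem:ruled}: a general element is governed by the line $\delta$, by the $5-d$ lines $\Delta_i$ each meeting $\delta$, by the length $2d-4$ scheme $K$, and by the choice of the four generators of $\Lambda_\psi=\mathrm{H}^0(\mathcal{I}_\psi(3))$ inside the fixed four-dimensional space (a $\mathrm{GL}_4$-worth, modulo scalars). This yields an explicit value for $\dim\mathfrak{ruled}_{3,d}$; I would check in passing that $\mathrm{h}^0(\mathcal{I}_\psi(3))=4$ for the general configuration, which is what makes the parametrization dominant onto $\mathfrak{ruled}_{3,d}$.

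The heart of the argument, and the main obstacle, is to rule out that $\psi_0$ is a specialization of a family of non-ruled maps. Here I would exploit that ruledness is exactly the presence of the non-isolated base component $\mathcal{I}_\delta^2\subset\mathcal{I}_{\psi_0}$, i.e. that every cubic of $\Lambda_{\psi_0}$ is singular along the whole line $\delta$. Suppose $\psi_t\to\psi_0$ with $\psi_t$ non-ruled of bidegree $(3,d)$. By Proposition~\ref{propB} and Theorem~\ref{Thm:genre1} each $\psi_t$ has $\mathfrak{p}_a(\mathcal{C}_1)\in\{0,1\}$, and its residual curve $\mathcal{C}_2$ moves in a flat family of curves of degree $9-d$ with constant arithmetic genus $\mathfrak{p}_2$, subject to the rigid constraints of those statements (for $\mathfrak{p}_a(\mathcal{C}_1)=1$, lying on a unique quadric with $\mathcal{I}_{\mathcal{C}_2}=(Q,\mathcal{S}_1,\ldots,\mathcal{S}_{d-2})$). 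The plan is to propagate this structure to the limit through the liaison sequences of Lemma~\ref{Lem:liaison} and Corollary~\ref{cor:formule}, and to show that the flat limit of the $\mathcal{C}_2$ cannot acquire the double line carried by a general ruled map while keeping the invariants $(\deg\mathcal{C}_2,\mathfrak{p}_2)$ permitted for non-ruled $(3,d)$ maps by Theorem~\ref{thmC}. Equivalently, I would compute the tangent space to $\mathrm{Bir}_{3,d}(\mathbb{P}^3_\mathbb{C})$ at $\psi_0$, i.e. the first-order deformations of the four cubics preserving birationality and the bidegree, and show that every such deformation keeps all four members singular along a line infinitely near $\delta$, hence is tangent to $\mathfrak{ruled}_{3,d}$; with the dimension count this identifies the local dimension of $\mathrm{Bir}_{3,d}(\mathbb{P}^3_\mathbb{C})$ at $\psi_0$ with $\dim\mathfrak{ruled}_{3,d}$ and forces $\mathfrak{ruled}_{3,d}$ to be a component.

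I expect the genuine difficulty to be precisely this infinitesimal rigidity of the double line under fixed bidegree. A naive global dimension comparison cannot suffice, since already in bidegree $(3,3)$ the non-ruled cubo-cubic family $\mathcal{E}_2$ is far larger than $\mathfrak{ruled}_{3,3}$, so $\mathfrak{ruled}_{3,d}$ is a component \emph{despite} having smaller dimension than neighbouring non-ruled families. The argument must therefore be local at a general ruled map, showing that the constraint $\deg\mathcal{C}_2=9-d$ imposed by the fixed bidegree, combined with the liaison structure, prevents the characteristic non-reduced base component from being smoothed within $\mathrm{Bir}_{3,d}(\mathbb{P}^3_\mathbb{C})$.
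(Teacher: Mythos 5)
Your reduction is sound as far as it goes: irreducibility comes from Lemma~\ref{Lem:ruled}, the ruled locus is closed (the paper gets this from the singularity structure of the members of $\Lambda_\psi$ rather than from semicontinuity of the genus, but either route is plausible), so everything hinges on showing that a general ruled map is not a limit of non-ruled maps of the same bidegree. That decisive step, however, is never executed in your proposal — you announce a plan (``propagate this structure to the limit'', ``I would compute the tangent space \ldots\ and show that every such deformation keeps all four members singular along a line'') without producing the invariant that makes it work. Moreover, the one concrete constraint you do invoke is wrong: Proposition~\ref{propB} and Theorem~\ref{Thm:genre1} are conditional statements under hypotheses on $\mathfrak{p}_a(\mathcal{C}_1)$; they do not force $\mathfrak{p}_a(\mathcal{C}_1)\in\{0,1\}$ for non-ruled maps, and for $d=5$ the paper exhibits non-ruled maps with $\mathfrak{p}_a(\mathcal{C}_1)=2$ (the families $\mathcal{E}_{13}$, $\mathcal{E}_{19}$, $\mathcal{E}_{24}$ of \S\ref{Sec:cuboquintic}). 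Finally, $\deg\mathcal{C}_2=9-d$ holds for \emph{every} $(3,d)$ map, ruled or not, since $\mathcal{C}_1\cup\mathcal{C}_2$ is always a $(3,3)$ complete intersection; so ``the constraint $\deg\mathcal{C}_2=9-d$ imposed by the fixed bidegree'' cannot by itself separate the two loci. You are conflating the linked curve $\mathcal{C}_2$ with the one-dimensional part $F_\psi^1$ of the base scheme, and it is exactly the discrepancy between the two that the paper exploits.

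The missing idea is the semicontinuity of $\deg\mathcal{I}_\psi=\deg F_\psi^1$. If $\psi$ is non-ruled, a general member of $\Lambda_\psi$ is smooth at a generic point of $F_\psi^1$, so $F_\psi^1$ is locally complete intersection there and $\deg F_\psi^1=\deg\mathcal{C}_2=9-d$. If $\psi$ is ruled, $F_\psi^1$ has a component with ideal $\mathcal{I}_\delta^2$, which is not locally complete intersection; this multiple structure is contained in the locally complete intersection curve $\mathcal{C}_2$, forcing the strict inequality $\deg F_\psi^1<\deg\mathcal{C}_2=9-d$. Since $\deg\mathcal{I}_\psi$ cannot decrease under specialization within a fixed bidegree, a family of non-ruled $(3,d)$ maps cannot degenerate to a ruled one — a one-line argument replacing your unexecuted infinitesimal-rigidity program (which, as you yourself observe, cannot be a naive dimension comparison, and whose tangent-space computation you never supply). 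As written, the proposal has a genuine gap precisely at its central step.
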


\begin{proof}
Let us use the notations introduced in Lemma \ref{Lem:ruled}. Note that $F_\psi^1\subset\mathcal{C}_2$. If $\psi\in\mathrm{Bir}_{3,d}(\mathbb{P}^3_\mathbb{C})$ is not ruled then at a generic point $p\in F_\psi^1$ there exists an element of $\Lambda_\psi$ smooth at $p$. Hence $F_\psi^1$ is locally complete intersection at $p$ and $\deg F_\psi^1=\deg\mathcal{C}_ 2$. In particular $\deg\mathcal{I}_\psi=9-d$.\smallskip

Consider now an element $\psi$ in $\mathfrak{ruled}_{3,d}$. There is a line $\ell$ such that $\ell\subset\mathrm{Sing}\,\mathcal{S}$ for any $\mathcal{S}\in\Lambda_\psi$; the set~$F_\psi^1$ has an irreducible component whose ideal is $\mathcal{I}_\ell^2$ and $F_\psi^1$ is not locally complete intersection. This multiple structure has to be contained in $\mathcal{C}_2$ but since $\mathcal{C}_2$ is locally complete intersection the inequality $\deg\mathcal{C}_2>\deg F_\psi^1$ holds; it can be rewritten $\deg \mathcal{I}_\psi<9-d$.\smallskip

The number $\deg\mathcal{I}_\psi$ cannot decrease by specialization so we cannot specialize a non-ruled birational map into a ruled one while staying in the same bidegree\footnote{As we will see in Proposition \ref{Pro:inter} this statement is not true if we do not specify ''while staying in the same degree''.}.\smallskip

Elements of $\Lambda_\psi$ when $\psi$ is a ruled birational map have no isolated singularities whereas general elements of $\Lambda_\psi$ when $\psi$ is a non-ruled birational map have at most isolated singularities, it is impossible to specialize a ruled birational map into a non-ruled one.
\end{proof}

\begin{cor}
Let $\psi$ be a $(3,\cdot)$ birational map of $\mathbb{P}^3_\mathbb{C}$; if the general element of $\Lambda_\psi$ is smooth or if the singularities of a general element of $\Lambda_\psi$ are isolated, then $\deg F_\psi^1=\deg\mathcal{C}_2$.
\end{cor}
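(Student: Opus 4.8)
The plan is to reduce the equality of degrees to an equality of local multiplicities along each one-dimensional component of the base locus, and to read off these multiplicities from a valuation computation made available by the hypothesis. First I would set up the inclusion and the support: since two general members $\mathcal{S}_1,\mathcal{S}_2\in\Lambda_\psi$ both lie in $\mathcal{I}_\psi$, the complete intersection $\Gamma_\ell=V(\mathcal{S}_1,\mathcal{S}_2)$ contains $F_\psi$, hence contains $F_\psi^1$; being a curve supported in the base locus, $F_\psi^1$ is therefore contained in $\mathcal{C}_2$, and the two schemes have the same support, namely the union of the $1$-dimensional components $\Gamma$ of $F_\psi$. Writing $\deg F_\psi^1=\sum_\Gamma m_\Gamma(F_\psi^1)\deg\Gamma$ and $\deg\mathcal{C}_2=\sum_\Gamma m_\Gamma(\mathcal{C}_2)\deg\Gamma$ over this common index set, it suffices to prove $m_\Gamma(F_\psi^1)=m_\Gamma(\mathcal{C}_2)$ for every $\Gamma$. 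Moreover, as $\mathcal{C}_1$ and $\mathcal{C}_2$ share no component, the generic point $\eta$ of $\Gamma$ avoids $\mathcal{C}_1$, so near $\eta$ one has $\Gamma_\ell=\mathcal{C}_2$ and hence $m_\Gamma(\mathcal{C}_2)=\mathrm{length}\,\mathcal{O}_{\Gamma_\ell,\eta}$.

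Next I would localize at $\eta$. Set $R=\mathcal{O}_{\mathbb{P}^3_\mathbb{C},\eta}$, a two-dimensional regular local ring with maximal ideal $\mathfrak{m}$, and $I=\mathcal{I}_{\psi,\eta}$, which is $\mathfrak{m}$-primary; since $F_\psi$ has neither embedded nor isolated point at $\eta$, one has $m_\Gamma(F_\psi^1)=\mathrm{length}\,(R/I)$. Here the hypothesis enters: if the general element of $\Lambda_\psi$ is smooth, or has only isolated singularities, then $\eta$ cannot lie in $\mathrm{Sing}\,\mathcal{S}$ for general $\mathcal{S}$, for otherwise the curve $\overline{\{\eta\}}=\Gamma$ would sit inside the finite set $\mathrm{Sing}\,\mathcal{S}$. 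Thus a general $\mathcal{S}_1$ is smooth at $\eta$, i.e. its germ lies in $\mathfrak{m}\setminus\mathfrak{m}^2$, so $D:=R/(\mathcal{S}_1)$ is a discrete valuation ring, with valuation $v$ and uniformizer $t$. The image $\bar I$ of $I$ in $D$ is then $(t^{\,b})$ with $b=\min_i v(\bar\psi_i)$, whence $\mathrm{length}\,(R/I)=\mathrm{length}\,(D/\bar I)=b$.

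The delicate step, which I expect to be the crux, is to check that the general complete intersection cuts $\Gamma$ out with exactly this multiplicity, gaining no extra length. Since $\bar I=(t^{\,b})$ is generated by the images of $\Lambda_\psi$ in $D$, a general member $\mathcal{S}_2$ satisfies $v(\bar{\mathcal{S}_2})=b$, so $m_\Gamma(\mathcal{C}_2)=\mathrm{length}\,\mathcal{O}_{\Gamma_\ell,\eta}=\mathrm{length}\,\bigl(D/(\bar{\mathcal{S}_2})\bigr)=b$ as well; equivalently, the smoothness of $\mathcal{S}_1$ at $\eta$ exhibits $F_\psi^1$ as a Cartier divisor on a smooth surface, hence locally complete intersection, which the general $\Gamma_\ell$ recovers with its exact multiplicity. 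This gives $m_\Gamma(F_\psi^1)=m_\Gamma(\mathcal{C}_2)=b$ for every $\Gamma$, and summing yields $\deg F_\psi^1=\deg\mathcal{C}_2$. It is worth pointing out where the hypothesis is indispensable: in the ruled case the base locus contains a double line $\mathcal{I}_\delta^2$ along which no member of $\Lambda_\psi$ is smooth, $R/(\mathcal{S}_1)$ is no longer regular, and one recovers only the strict inequality $\deg\mathcal{C}_2>\deg F_\psi^1$ of Proposition \ref{Pro:compirr}.
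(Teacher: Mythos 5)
Your proposal is correct and takes essentially the same route as the paper: the paper's own (very terse) justification, given inside the proof of Proposition \ref{Pro:compirr}, is precisely that for a non-ruled map a general element of $\Lambda_\psi$ is smooth at the generic point of each one-dimensional component of $F_\psi^1$, so $F_\psi^1$ is there a locally complete intersection (a Cartier divisor on a smooth surface germ) and the general complete intersection $\Gamma_\ell$ cuts it out with exact multiplicity, yielding $\deg F_\psi^1=\deg\mathcal{C}_2$. Your localization at the generic point and the valuation computation in the discrete valuation ring $\mathcal{O}_{\mathcal{S}_1,\eta}$ merely make that one-line step explicit, and your closing remark on the ruled case matches the strict inequality $\deg\mathcal{C}_2>\deg F_\psi^1$ noted in the same proposition.
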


\section{$(3,3)$-\textsc{Cremona} transformations}\label{Sec:cubocubique}

\subsection{Some known results}

\subsubsection{}

In the literature one can find different points of view concerning the classification of $(3,3)$ birational maps. For example \textsc{Hudson} introduced many invariants related to singularities of fa\-milies of surfaces and gave four families described in \S\ref{Sec:hudsontable}; nevertheless we do not understand why the family $\mathcal{E}_{3.5}$ defined below does not appear. \textsc{Pan} chose an other point of view and regrouped $(3,3)$ birational maps into three families. A $(3,3)$ birational map $\psi$ of $\mathbb{P}^3_\mathbb{C}$ is called \textbf{\textit{determinantal}} if there exists a $4\times 3$ matrix~$M$ with linear entries such that $\psi$ is given by the four $3\times 3$ minors of the matrix $M$; the inverse~$\psi^{-1}$ is also determinantal. Let us denote by $\mathbf{T}_{3,3}^{\mathbf{D}}$ the set of determinantal maps. A $(3,3)$ \textsc{Cremona} transformation is a \textbf{\textit{\textsc{de Jonqui\`eres}}} one if and only if the strict transform of a general line under $\psi^{-1}$ is a singular plane rational cubic curve whose singular point is fixed. For such a map  there is always a quadric contracted onto a point, the corresponding fixed point for~$\psi^{-1}$ which is also a \textsc{de Jonqui\`eres} transformation. The \textsc{de Jonqui\`eres} transformations form the set $\mathbf{T}_{3,3}^{\mathbf{J}}$. \textsc{Pan} established the following (\cite[Theorem 1.2]{Pan}): 
\[
\mathrm{Bir}_{3,3}(\mathbb{P}^3_\mathbb{C})=\mathbf{T}_{3,3}^{\mathbf{D}}\cup\mathbf{T}_{3,3}^{\mathbf{J}}\cup\mathfrak{ruled}_{3,3};
\]
in other words an element of $\mathrm{Bir}_{3,3}(\mathbb{P}^3_\mathbb{C})$ is a determinantal map, or a \textsc{de Jonqui\`eres} map, or a ruled map.

\begin{rem}
One has $\mathbf{T}_{3,3}^{\mathbf{D}}=\mathrm{Bir}_{3,3,3}(\mathbb{P}^3_\mathbb{C})$ and $\mathbf{T}_{3,3}^{\mathbf{J}}=\mathrm{Bir}_{3,3,4}(\mathbb{P}^3_\mathbb{C})$; hence $\mathrm{Bir}_{3,3,\mathfrak{p}_2}(\mathbb{P}^3_\mathbb{C})$ is irreducible for $\mathfrak{p}_2\in\{3,4\}$ (\emph{see} \cite{Pan}).
\end{rem}

\begin{rem}
The birational involution $(z_0z_1^2:z_0^2z_1:z_0^2z_2:z_1^2z_3)$ is determinantal, the matrix being 
\[
\left[
\begin{array}{ccc}
z_0 & z_3 & 0\\
-z_1 & 0 & z_2\\
0 & 0 & -z_1\\
0 & -z_0 & 0
\end{array}
\right],
\]
and also ruled: all the partial derivatives of the components of the map vanish on $z_0=z_1=0$. The \textsc{Cremona} transformation $(z_0^3:z_0^2z_1:z_0^2z_2:z_1^2z_3)$ is a \textsc{de Jonqui\`eres} and a ruled one.

One has (\cite{Pan2})
\[
\mathbf{T}_{3,3}^{\mathbf{D}}\cap\mathbf{T}_{3,3}^{\mathbf{J}}=\emptyset,\quad\quad
\mathbf{T}_{3,3}^{\mathbf{D}}\cap\mathfrak{ruled}_{3,3}\not=\emptyset,\quad\quad
\mathbf{T}_{3,3}^{\mathbf{J}}\cap\mathfrak{ruled}_{3,3}\not=\emptyset.
\]
\end{rem}

We deal with the natural description of the irreducible components of $\mathrm{Bir}_{3,3}$ which does not coincide with \textsc{Pan}'s point of view since one of his family is contained in the closure of another one.

\subsection{Irreducible components of the set of $(3,3)$ birational maps}

\subsubsection{General description of $(3,3)$ birational maps}

One already describes an irreducible component of $\mathrm{Bir}_{3,3}(\mathbb{P}^3_\mathbb{C})$, the one that contains $(3,3)$ ruled birational maps (Proposition \ref{Pro:compirr}). Hence let us consider the case where the linear system $\Lambda_\psi$ associated to $\psi\in\mathrm{Bir}_{3,3}(\mathbb{P}^3_\mathbb{C})$ contains a cubic surface without double line.

\begin{itemize}
\item If $\mathcal{C}_1$ is smooth then it is a twisted cubic, we are in family $\mathcal{E}_2$ of Table VI (\emph{see} \S\ref{Sec:hudsontable}). In that case~$\psi$ is determinantal; more precisely a $(3,3)$ birational map is determinantal if and only if its base locus scheme is an arithme\-tically \textsc{Cohen}-\textsc{Macaulay} curve of degree~$6$ and (arithmetic) genus $3$ (\emph{see} \cite[Proposition~1]{AvritzerGonzalezSprinbergPan}).

\medskip

\item Otherwise $\omega_{\mathcal{C}_1}=\mathcal{O}_{\mathcal{C}_1}$, and $\psi$ belongs to the irreducible family $\mathbf{T}_{3,3}^{\mathbf{J}}$ of \textsc{Jonqui\`eres} maps ($\mathcal{E}_3$ in terms of \textsc{Hudson}'s classification). The curve $\mathcal{C}_2$ lies on a quadric described by the quadratic form $Q$. According to Theorem \ref{Thm:genre1} the ideal of $\mathcal{C}_2$ is $(Q,\mathcal{S})$, and there exists a point~$p$ such that $p\in Q$, and $p$ is a singular point of $\mathcal{S}$. Furthermore $\mathcal{I}_\psi=\mathcal{I}_pQ+(\mathcal{S})$. Reciprocally such a triplet $(p,Q,\mathcal{S})$ induces a birational map. 

The family $\mathbf{T}_{3,3}^{\mathbf{J}}$ is stratified as follows by \textsc{Hudson} (all the cases belong to $\overline{\mathcal{E}_3}$):
\begin{itemize}
\item[$\bullet$] {\sl Description of $\mathcal{E}_3$}. The general element of $\mathcal{I}_pQ+(\mathcal{S})$ has an ordinary quadratic singula\-rity at $p$ (configuration $(2,2)$ of \texttt{Table} $1$ (\emph{see} \S\ref{Sec:singcubsurf})), and the generic cubic is singular at~$p$ with a quadratic form of rank $3$.

\item[$\bullet$]  {\sl Description of $\mathcal{E}_{3.5}$}. The point $p$ lies on $Q$ ($p$ is a smooth point or not) and the generic cubic is singular at $p$ with a quadratic form of rank $2$. In other words $p$ is a binode and this happens when one of the two biplanes is contained in $\mathrm{T}_pQ$, it corresponds to the configuration $(2,3)'$ of \texttt{Table} $1$ (\emph{see} \S\ref{Sec:singcubsurf}). 
The generic cubic is singular at $p$ with a quadratic form of rank $2$; this case does not appear in Table VI (\emph{see} \S\ref{Sec:hudsontable}). Let us denote by~$\mathcal{E}_{3.5}$ the set of the associated $(3,3)$ birational maps. The curve $\mathcal{C}_2$ has degree~$6$ and a triple point (in $Q$).

\item[$\bullet$] {\sl Description of $\mathcal{E}_4$}. 
The point $p$ is a double point of contact, it corresponds to configuration $(2,4)$ of \texttt{Table}~$1$ (\emph{see} \S\ref{Sec:singcubsurf}). 
\end{itemize}
\end{itemize}

\begin{pro}\label{Pro:incidence}
One has 
 \[
 \dim \mathcal{E}_2=39, \quad\dim\mathcal{E}_3=38, \quad\dim\mathcal{E}_{3.5}=35, \quad\dim\mathcal{E}_4=35, \quad \dim\mathcal{E}_5=31,
\]
and
 \[
\overline{\mathcal{E}_3}=\mathbf{T}_{3,3}^{\mathbf{J}}, \quad\mathring{\mathcal{E}_{3.5}}\subset\overline{\mathcal{E}_3},\quad\mathring{\mathcal{E}_4}\subset\overline{\mathcal{E}_3},\quad\mathring{\mathcal{E}_4}\not\subset\overline{\mathcal{E}_{3.5}},\quad\mathring{\mathcal{E}_{3.5}}\not\subset\overline{\mathcal{E}_4}.
\]
\end{pro}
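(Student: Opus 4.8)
The plan is to realise each stratum as the image of an irreducible incidence variety and to read its dimension off the parametrisation $\mathcal{I}_\psi=\mathcal{I}_pQ+(\mathcal{S})$ provided, for non-determinantal $(3,3)$ maps, by Theorem~\ref{Thm:genre1}. A triple $(p,Q,\mathcal{S})$ recovers $\Lambda_\psi$ and conversely produces a map; since two bases of a fixed $\Lambda_\psi$ differ by a left $\mathrm{PGL}(4;\mathbb{C})$-action one has
\[
\dim(\text{family of maps})=\dim(\text{family of }\Lambda_\psi)+\dim\mathrm{PGL}(4;\mathbb{C})=\dim(\text{family of }\Lambda_\psi)+15 .
\]
For each stratum I would form the incidence variety of triples $(p,Q,\mathcal{S})$ satisfying the relevant open or closed conditions, prove it irreducible by displaying it as a tower of projective bundles and open subsets over the irreducible base $\{p\}$ (first $p$, then the admissible $Q$, then the admissible $\mathcal{S}$ taken modulo $\mathcal{I}_pQ$), and use Lemma~\ref{lem:bir} to check that the generic triple gives a genuinely birational map, so the projection to $\mathrm{Bir}_{3,3}(\mathbb{P}^3_\mathbb{C})$ is dominant onto the stratum with finite generic fibre.

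The dimension computation is then bookkeeping. A point $p$ gives $3$ parameters. For $\mathcal{E}_3$ and $\mathcal{E}_{3.5}$ the quadric $Q$ is smooth at $p$, contributing $8$; for $\mathcal{E}_4$ it is a double point of contact, i.e. $Q\in\mathcal{I}_p^2$, contributing only $5$. The cubic $\mathcal{S}$ is singular at $p$, hence lies in a $16$-dimensional space; modulo the $3$-dimensional subspace $\mathcal{I}_pQ$ and projectively this yields $12$ parameters in the generic (rank $3$) cases $\mathcal{E}_3,\mathcal{E}_4$. For $\mathcal{E}_{3.5}$ one imposes that the quadratic part $q_{\mathcal{S}}$ at $p$ has rank $2$ with one biplane equal to $\mathrm{T}_pQ$, i.e. $q_{\mathcal{S}}=\ell_Q\cdot\ell'$; this cuts the quadratic part from $6$ to $3$, leaving $9$ for $\mathcal{S}$. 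Adding $15$ gives $3+8+12+15=38$ for $\mathcal{E}_3$, $3+8+9+15=35$ for $\mathcal{E}_{3.5}$ and $3+5+12+15=35$ for $\mathcal{E}_4$. The value $\dim\mathcal{E}_2=39$ I would get from the Hilbert scheme of its base curves: a smooth $\mathcal{C}_1$ is an arithmetically Cohen--Macaulay sextic of genus $3$ embedded by a complete non-special system, so these curves form a family of dimension $6+3+15=24$ (moduli, Picard, projective frame), whence $24+15=39$; the count for $\mathcal{E}_5$ proceeds identically from its Hudson normal form and gives $31$.

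The inclusions are formal. As $\mathbf{T}_{3,3}^{\mathbf{J}}$ is irreducible (Pan) and $\mathcal{E}_3$ is the open substratum on which the quadratic part of the generic member of $\Lambda_\psi$ at $p$ has maximal rank $3$, the locus $\mathcal{E}_3$ is dense, so $\overline{\mathcal{E}_3}=\mathbf{T}_{3,3}^{\mathbf{J}}$, and in particular $\mathring{\mathcal{E}_{3.5}}\subset\overline{\mathcal{E}_3}$ and $\mathring{\mathcal{E}_4}\subset\overline{\mathcal{E}_3}$. For the two non-inclusions I would use invariants that are constant on a stratum and semicontinuous on the closure, working on the incidence variety $\{(\psi,p,Q)\}$ whose projection to $\mathbf{T}_{3,3}^{\mathbf{J}}$ is proper. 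First, the rank $r(\psi)$ of the quadratic part at $p$ of a generic member of $\Lambda_\psi$ satisfies: the locus $\{r\le 2\}$ is closed (vanishing of the discriminant of the pencil of quadratic parts), it contains $\mathcal{E}_{3.5}$ and hence $\overline{\mathcal{E}_{3.5}}$, while every point of $\mathring{\mathcal{E}_4}$ has $r=3$; thus $\mathring{\mathcal{E}_4}\cap\overline{\mathcal{E}_{3.5}}=\emptyset$ and $\mathring{\mathcal{E}_4}\not\subset\overline{\mathcal{E}_{3.5}}$. Symmetrically, the condition ``$Q$ is singular at $p$'' is closed, holds on $\mathcal{E}_4$ and hence on $\overline{\mathcal{E}_4}$, whereas the generic element of $\mathcal{E}_{3.5}$ has $Q$ smooth at $p$ (this is exactly what the biplane condition $q_{\mathcal{S}}=\ell_Q\ell'$ requires); since the pair $(p,Q)$ is canonically attached to a de Jonqui\`eres map, this gives $\mathring{\mathcal{E}_{3.5}}\not\subset\overline{\mathcal{E}_4}$.

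The main obstacle is the rigour of the last paragraph rather than the arithmetic of the first two. One must know that the distinguished point $p$ (Lemma~\ref{Lem:indpt}, Theorem~\ref{Thm:genre1}) and the unique quadric $Q$ through $\mathcal{C}_2$ extend to algebraic data over all of $\mathbf{T}_{3,3}^{\mathbf{J}}$, so that the two invariants are genuinely defined and the asserted loci are genuinely closed; I would secure this by passing to the incidence variety $\{(\psi,p,Q)\}$, using uniqueness of $(p,Q)$ to make its projection birational over the de Jonqui\`eres locus and properness to transport the pointwise conditions to closed subvarieties downstairs. A second, more routine, point needing care is that every parameter count is exact: one must check that no unexpected condition lowers the dimension and that the generic triple $(p,Q,\mathcal{S})$ in each family indeed defines a birational (not merely dominant) map, which is precisely the content of Lemma~\ref{lem:bir}.
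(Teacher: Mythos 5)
Your proposal is correct and takes essentially the same route as the paper: the paper's proof consists precisely of the parameter counts you give, namely $\dim\mathcal{E}_3=11+12+15=38$ (quadric through $p$, cubic singular at $p$ modulo $\mathcal{I}_pQ$, plus $15$ for the $\mathrm{PGL}(4;\mathbb{C})$-action on the target) and $\dim\mathcal{E}_4=8+12+15=35$, resting on the same parametrization $\mathcal{I}_\psi=\mathcal{I}_pQ+(\mathcal{S})$ from Theorem~\ref{Thm:genre1}. The remaining items, which the paper leaves implicit (the $\mathcal{E}_2$, $\mathcal{E}_{3.5}$, $\mathcal{E}_5$ counts, density of $\mathcal{E}_3$ in the irreducible $\mathbf{T}_{3,3}^{\mathbf{J}}$, and the two non-inclusions via the semicontinuous invariants ``rank of the quadratic part at $p$'' and ``$Q$ singular at $p$''), you fill in correctly and in the spirit of the paper's own $\mathcal{E}_{7.5}$ versus $\mathcal{E}_8$ comparison in the $(3,4)$ section.
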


\begin{proof}
Let us justify the equality $\dim\mathcal{E}_3=38$. We have to choose a quadric $Q$ and a point $p$ on~$Q$, this gives $9+2=11$. Then we take a cubic surface singular at $p$ that yields to $19-4=15$; since we look at this surface modulo $pQ$ one gets $15-3=12$ so 
\[
\dim\mathcal{E}_3=11+12+15=38.
\]

Let us deal with $\dim\mathcal{E}_4$. We take a singular quadric $Q$ this gives $8$. Then we take a cubic singular at $p$, modulo $pQ$ and this yields to $19-4-3=12$, and finally one obtains $12+8+15=35$.
\end{proof}

\subsubsection{Irreducible components}

\begin{thm}\label{thm:comp33}
The set $\mathfrak{ruled}_{3,3}$ is an irreducible component of $\mathrm{Bir}_{3,3}(\mathbb{P}^3_\mathbb{C})$, and there is only one another irreducible component in $\mathrm{Bir}_{3,3}(\mathbb{P}^3_\mathbb{C})$. More precisely the set of the Jonqui\`eres maps $\overline{\mathcal{E}_3}$ is contained in the closure of determinantal ones $\overline{\mathcal{E}_2}$ whereas $\mathfrak{ruled}_{3,3}\not\subset\overline{\mathcal{E}_2}$.
\end{thm}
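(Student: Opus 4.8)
The plan is to prove three things: that $\mathfrak{ruled}_{3,3}$ is an irreducible component, that the only other component is $\overline{\mathcal{E}_2}$, and the two inclusion/non-inclusion statements. The first assertion is already established in Proposition~\ref{Pro:compirr}, so I would simply invoke it. The real content is the second and third assertions, and here the dimension count in Proposition~\ref{Pro:incidence} does the heavy lifting: since $\dim\mathcal{E}_2=39$ strictly exceeds $\dim\mathcal{E}_3=38$, $\dim\mathcal{E}_{3.5}=35$, $\dim\mathcal{E}_4=35$, and $\dim\mathcal{E}_5=31$, the determinantal family is the unique candidate for a top-dimensional non-ruled component, provided one shows that every non-ruled $(3,3)$ map lies in $\overline{\mathcal{E}_2}$.

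The main step is therefore to prove $\overline{\mathcal{E}_3}\subset\overline{\mathcal{E}_2}$, i.e.\ that a generic \textsc{de Jonqui\`eres} map is a limit of determinantal maps. I would argue on the level of the base curve $\mathcal{C}_2$. By the classification recalled before Proposition~\ref{Pro:incidence}, the non-ruled $(3,3)$ maps split into the determinantal case (where $\mathcal{C}_1$ is a smooth twisted cubic, $\mathfrak{p}_a(\mathcal{C}_1)=0$, and by Corollary~\ref{cor:formule} and the general theory $\mathcal{C}_2$ is an arithmetically \textsc{Cohen}--\textsc{Macaulay} sextic of genus $3$) and the \textsc{Jonqui\`eres} case (where $\omega_{\mathcal{C}_1}=\mathcal{O}_{\mathcal{C}_1}$, $\mathfrak{p}_a(\mathcal{C}_1)=1$, and by Theorem~\ref{Thm:genre1} $\mathcal{C}_2$ is a sextic of genus $4$ with ideal $\mathcal{I}_pQ+(\mathcal{S})$ lying on the quadric $Q$). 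The key is that a genus-$3$ ACM sextic can degenerate to a genus-$4$ sextic lying on a quadric: concretely I would exhibit a flat family of sextic curves whose generic member is the determinantal base curve and whose special member is the \textsc{Jonqui\`eres} base curve, then lift this to a family of \textsc{Cremona} maps. The lifting is controlled by the liaison sequence \eqref{eq:blabla2}: the $(3,3)$ maps are governed by $\mathrm{H}^0(\mathcal{I}_{\mathcal{C}_2}(3h))$, and since both families have $\mathcal{I}_{\mathcal{C}_2}$ generated by cubics (Corollary~\ref{cor:cubics}, using that $\omega_{\mathcal{C}_1}(h)$ is globally generated in both cases), the four cubics defining $\psi$ vary in a family over the degeneration of $\mathcal{C}_2$. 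One must check the map stays birational along the family, which follows from Lemma~\ref{lem:bir} by semicontinuity of the relevant intersection lengths.

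The non-inclusion $\mathfrak{ruled}_{3,3}\not\subset\overline{\mathcal{E}_2}$ is the cleaner half: I would reuse the argument from the proof of Proposition~\ref{Pro:compirr}. For a non-ruled map (in particular any map in $\mathcal{E}_2$) one has $\deg F_\psi^1=\deg\mathcal{C}_2=9-d=6$ and $F_\psi^1$ is locally complete intersection, hence $\deg\mathcal{I}_\psi=6$; for a ruled map the double-line structure $\mathcal{I}_\delta^2$ forces $F_\psi^1$ to fail to be locally complete intersection and $\deg\mathcal{I}_\psi<6$. Since $\deg\mathcal{I}_\psi$ cannot decrease under specialization, a ruled map cannot lie in the closure of the non-ruled family $\mathcal{E}_2$; this is exactly the content of the footnoted remark in Proposition~\ref{Pro:compirr}, now applied with $d=3$.

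The step I expect to be the genuine obstacle is constructing the explicit flat degeneration of the base sextic from the ACM genus-$3$ curve to the genus-$4$ curve on a quadric \emph{together} with a consistent choice of four generating cubics, while verifying that birationality is preserved in the limit. A dimension count alone shows $\dim\mathcal{E}_3<\dim\mathcal{E}_2$, but this does not by itself yield the inclusion $\overline{\mathcal{E}_3}\subset\overline{\mathcal{E}_2}$; one really needs to produce the family, and the delicate point is that the liaison data and the rank-$3$ quadratic singularity at $p$ must be realized as a limit of the smooth twisted-cubic configuration. I would handle this by writing down a one-parameter family of ideals interpolating between the two forms of $\mathcal{I}_{\mathcal{C}_2}$ and invoking flatness via constancy of the Hilbert polynomial, then appealing to Lemma~\ref{lem:bir} to confirm the generic member of the resulting family of maps is birational of bidegree $(3,3)$.
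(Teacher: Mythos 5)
Your overall architecture matches the paper's: the first and last assertions are indeed handled exactly as you say (Proposition \ref{Pro:compirr} shows $\mathfrak{ruled}_{3,3}$ is a component and that no ruled map is a specialization of a non-ruled one, so $\mathfrak{ruled}_{3,3}\not\subset\overline{\mathcal{E}_2}$), and with \textsc{Pan}'s trichotomy $\mathrm{Bir}_{3,3}=\mathbf{T}_{3,3}^{\mathbf{D}}\cup\mathbf{T}_{3,3}^{\mathbf{J}}\cup\mathfrak{ruled}_{3,3}$ everything reduces to $\overline{\mathcal{E}_3}\subset\overline{\mathcal{E}_2}$. (The dimension count from Proposition \ref{Pro:incidence} is actually dispensable: a lower-dimensional family can perfectly well be a separate component, so it does no logical work here, as you half-concede.) But the one step that carries the whole theorem --- producing the degeneration of determinantal maps to a \emph{generic} \textsc{de Jonqui\`eres} map --- is precisely the step you leave unconstructed, and the mechanism you sketch for it cannot work as stated. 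You propose a flat family of ideals ``interpolating between the two forms of $\mathcal{I}_{\mathcal{C}_2}$'' with flatness ``via constancy of the Hilbert polynomial''; but the two base curves have \emph{different} Hilbert polynomials: both are sextics, yet $\mathfrak{p}_a(\mathcal{C}_2)=3$ in the determinantal case and $\mathfrak{p}_a(\mathcal{C}_2)=4$ in the \textsc{de Jonqui\`eres} case, so $\chi(\mathcal{O}_{\mathcal{C}_2}(n))$ is $6n-2$ versus $6n-3$. No flat family of curve ideals joins them; the flat limit of the genus-$3$ ACM sextics is forced to acquire an embedded point at $p$.

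This is not a cosmetic issue, because it breaks your lifting step as well: in the limit $\mathrm{h}^0\big(\mathcal{I}_{\mathcal{C}_2}(3h)\big)$ jumps from $4$ to $5$, so whereas for a determinantal map $\Lambda_\psi$ is \emph{all} of $\mathrm{H}^0\big(\mathcal{I}_{\mathcal{C}_2}(3h)\big)$, for a \textsc{de Jonqui\`eres} map it is the proper hyperplane $\mathcal{I}_pQ+(\mathcal{S})$ inside the five cubics through $\mathcal{C}_2$. Hence the limit linear system is \emph{not} determined by a degeneration of $\mathcal{C}_2$ alone, and one must deform the linear system (equivalently the ideal $\mathcal{I}_\psi$, embedded point included) directly, then verify that a generic triple $(p,Q,\mathcal{S})$ is attained. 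This is exactly what the paper's proof does, and it is its entire content: it writes the explicit pencil of $4\times 3$ matrices $A+tB$, notes that for $t\neq 0$ the $3\times 3$ minors define a generic determinantal map, and computes the first-order terms $\det(A_i+tB_i)/t$ at $t=0$, obtaining the system spanned by $z_0Q$, $z_1Q$, $z_2Q$ and $S$, i.e. $\mathcal{I}_pQ+(\mathcal{S})$; genericity holds because the minors $\Delta^{j,k}$ generate $\mathbb{C}[z_0,z_1,z_2]_2$, so $S$ is a generic cubic of $(z_0,z_1,z_2)^2$ and $Q$ a generic quadric through $p$. Note also that your appeal to Lemma \ref{lem:bir} ``by semicontinuity'' points in the wrong direction: in the correct construction the $t\neq 0$ members are determinantal, hence birational for standard reasons, and the $t=0$ limit is the already-known \textsc{de Jonqui\`eres} system, so no birationality-in-the-limit needs checking --- what needs checking, and what you do not supply, is that the limit sweeps out a dense subset of $\mathcal{E}_3$.
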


\begin{proof}
Let us consider the matrix $A$ given by 
\[
\left[
\begin{array}{ccc}
0 & 0 & 0\\
-z_1 & -z_2 & 0\\
z_0 & 0 & -z_2\\
0 & z_0 & z_1
\end{array}
\right]
\]
and let $A_i$ denote the matrix $A$ minus the $(i+1)$-th line.
If $i>0$, the $2\times 2$ minors of $A_i$ are divisible by $z_{i-1}$. 

Consider the $3\times 4$ matrix $B$ given by $\big[b_{ij}\big]_{1\leq i\leq 4,\,1\leq j\leq 3}$
with $b_{ij}\in\mathrm{H}^0\big(\mathcal{O}_{\mathbb{P}^3_\mathbb{C}}(1)\big)$; as previously,~$B_i$ is the matrix $B$ minus the $(i+1)$-th line. Denote by $\Delta^{j,k}$ the determinant of the matrix $A_0$ minus the $j$-th line and the $k$-th column. The $\Delta^{j,k}$ generate $\mathbb{C}[z_0,z_1,z_2]_2$.
One has 
\[
\det(A_0+tB_0)=t\cdot S\quad [t^2]
\]
where 
\[
S=(b_{21}+b_{43})\Delta^{1,1}-(b_{31}-b_{42})\Delta^{2,1}+(b_{33}-b_{22})\Delta^{1,2}+b_{23}\Delta^{1,3}+b_{32}\Delta^{2,2}+b_{41}\Delta^{3,1} 
\]
is a generic cubic of the ideal $(z_0,z_1,z_2)^2$. For $i>0$ 
\[
\det(A_i+tB_i)=\det A_i+t\cdot(z_{i+1}Q)(-1)^{i+1}=t\cdot(z_{i+1}Q)(-1)^{i+1}\quad [t^2]
\]
where $Q=b_{1,1}z_2-b_{1,2}z_1+b_{1,3}z_0$ is the equation of a generic quadric that contains $(0,0,0,1)$.
So the map 
\[
\left[\frac{\det(A_0+tB_0)}{t}:\frac{\det(A_1+tB_1)}{t}:\frac{\det(A_2+tB_2)}{t}:\frac{\det(A_3+tB_3)}{t}\right] 
\]
allows to go from $\mathcal{E}_2$ to a general element of $\overline{\mathcal{E}_3}$.

\bigskip

Furthermore $\overline{\mathcal{E}_3}$ and $\mathfrak{ruled}_{3,3}$ are different components (Proposition \ref{Pro:compirr}).
\end{proof}

\section{$(3,4)$-\textsc{Cremona} transformations}\label{Sec:cuboquartic}

\subsection{General description of $(3,4)$ birational maps}\label{Subsec:cuboquartic}

The ruled maps $\mathfrak{ruled}_{3,4}$ give rise to an irreducible component (Proposition \ref{Pro:compirr}). Let us now focus on the case where the linear system $\Lambda_\psi$ associated to $\psi\in\mathrm{Bir}_{3,4}(\mathbb{P}^3_\mathbb{C})$ contains a cubic surface without double line.

\medskip

\begin{itemize}
\item First case: $\mathcal{C}_1$ is smooth. From $\mathrm{h}^0\omega_{\mathcal{C}_1}(h)=3$ one gets that $\mathcal{C}_2$ lies on five cubics. Since $\mathrm{h}^0\mathcal{O}_{\mathcal{C}_2}(2h)=~0$ the curve $\mathcal{C}_1$ lies on a quadric, and $\mathrm{h}^0\omega_{\mathcal{C}_2}=1$ thus $\omega_{\mathcal{C}_2}=\mathcal{O}_{\mathcal{C}_2}$. This configuration corresponds to $\mathcal{E}_6$. 

\medskip

\item Second case: $\mathcal{C}_1$ is a singular curve of degree $4$ not contained in a plane (\emph{see} Lemma~\ref{Lem:notonaplane}) so $\omega_{\mathcal{C}_1}=\mathcal{O}_{\mathcal{C}_1}$. The curve $\mathcal{C}_1$ lies on two quadrics and $\mathcal{C}_2$ on six cubics ($\mathrm{h}^0\omega_{\mathcal{C}_1}(h)=4$). Let~$p$ be the singular point of $\mathcal{C}_1$; all elements of $\Lambda_\psi$ are singular at $p$ (Theorem~\ref{Thm:genre1}), and~$p$ belongs to~$\mathcal{C}_2$ (Lemma \ref{Lem:psurc2}). The curve $\mathcal{C}_2$ lies on a unique quadric $\mathcal{Q}$ (Theorem \ref{Thm:genre1}), is linked to a line~$\ell$ in a $(2,3)$ complete intersection $Q\cap \mathcal{S}_1$ (with $\deg Q=2$ and $\deg \mathcal{S}_1=3$), and $\mathcal{I}_{\mathcal{C}_2}=(Q,\mathcal{S}_1,\mathcal{S}_2)$ with $\deg\mathcal{S}_2=3$ (Theorem \ref{Thm:genre1}). 

Since $\mathcal{C}_1$ is of degree $4$ and arithmetic genus $1$, one has $\mathrm{H}^0\big(\mathcal{O}_{\mathcal{C}_1}(h)\big)=\mathrm{H}^0\big(\mathcal{O}_{\mathbb{P}^3_\mathbb{C}}(1)\big)$. Let us consider $L=\mathrm{H}^0\big(\mathcal{I}_{\mathcal{C}_1\cup\mathcal{C}_2}(3h)\big)\subset\Lambda_\psi$ and the map
\[
\mathrm{H}^0\big(\mathcal{O}_{\mathcal{C}_1}(h)\big)\longrightarrow\frac{\mathrm{H}^0\big(\mathcal{I}_{\mathcal{C}_2}(3h)\big)}{L},\qquad h\mapsto Qh;
\]
it is injective. Indeed $\dim(\mathcal{C}_1\cap Q)=0$ thus modulo $Q$ the cubics defining $\mathcal{C}_1$ are independent. Therefore $\Lambda_\psi$ is contained in $(Q\mathcal{I}_p,\mathcal{S}_1,\mathcal{S}_2)$. For $p'\in\mathbb{P}^3_\mathbb{C}\smallsetminus Q$ one has
\[
\mathrm{H}^0\big((Q\mathcal{I}_p,\mathcal{S}_1,\mathcal{S}_2)\cap\mathcal{I}_{p'})(3)\big)=\Lambda_\phi 
\]
for some birational map $\phi$, and $\psi$ belongs to the closure of the set defined by all such maps~$\phi$.

\smallskip

Reciprocally let $Q$ be a quadric, $p$ be a point on $Q$, $\mathcal{S}_1$ be a cubic singular at $p$ and that contains a line $\ell$ of $Q$. If $\mathcal{C}_2$ is the residual of $\ell$ in $(Q,\mathcal{S}_1)$, then there exists $\mathcal{S}_2$ singular at $p$ such that $\mathcal{I}_{\mathcal{C}_2}=(Q,\mathcal{S}_1,\mathcal{S}_2)$. Set 
\[
\Lambda=\mathrm{H}^0\big((\mathcal{I}_{p_1}\cap(Q\mathcal{I}_p,\mathcal{S}_1,\mathcal{S}_2)(3)\big).
\]
Let $L$ be a $2$-dimensional general element of $\Lambda$; the general linked curve to $\mathcal{C}_2$ in $L$, denoted~$\mathcal{C}_{1,L}$, is of degree $4$, is singular at $p$, lies on two quadrics; furthermore the linear system induced by $\Lambda$ on $\mathcal{C}_{1,L}$ has the two following properties:
\begin{itemize}
\item its base locus contains $p$ and $p_1$,

\item it is birational.
\end{itemize}
In other words, $\Lambda=\Lambda_\psi$ for a $(3,4)$-birational map $\psi$.

\bigskip

Let us give some explicit examples, the generic one and the degeneracies considered by \textsc{Hudson}:
\begin{itemize}
\item[$\bullet$] {\sl Description of $\mathcal{E}_7$.} The quadric $Q$ is smooth at $p$, and the rank of $Q$ is maximal. Hence the point $p$ is an ordinary quadratic singularity of the generic element of $\Lambda_\psi$, we are in the configuration $(2,2)$ of \texttt{Table} $1$ (\emph{see} \S\ref{Sec:singcubsurf}). 

\item[$\bullet$] {\sl Description of $\mathcal{E}_{7.5}$.} In that case, $p$ is a binode, $Q$ is smooth at $p$ and one of the two biplanes is contained in $T_pQ$; we are in the configuration $(2,3)'$ of \texttt{Table} $1$ (\emph{see}~\S\ref{Sec:singcubsurf}). The set of such maps is denoted~$\mathcal{E}_{7.5}$, this case does not appear in Table~VI but should appear.

\item[$\bullet$] {\sl Description of $\mathcal{E}_8$.}  The second way to obtain a binode is the following one: $Q$ is an irreducible cone with vertex $p$. This corresponds to the configuration $(2,3)$ of \texttt{Table}~$1$ (\emph{see} \S\ref{Sec:singcubsurf}).

\item[$\bullet$] {\sl Description of $\mathcal{E}_9$.} The rank of $Q$ is $2$, and the point $p$ is a double point of contact; we are in the configuration $(2,4)$ of \texttt{Table} $1$ (\emph{see} \S\ref{Sec:singcubsurf}).

\item[$\bullet$] {\sl Description of $\mathcal{E}_{10}$.} The general element of $\Lambda_\psi$ has a double point of contact and a binode (configurations $(2,4)$ and $(1,4)$ of \texttt{Table} $1$, \emph{see} \S\ref{Sec:singcubsurf}). \textsc{Hudson} details this case carefully (\cite[Chap. XV]{Hudson}).
\end{itemize}
\end{itemize}

\begin{pro}
One has the following properties:
\[
\dim\mathcal{E}_6=38,\qquad\mathcal{E}_{7.5}\cup\mathcal{E}_8\subset\overline{\mathcal{E}_7}
\]
and 
\begin{itemize}
\item[$\bullet$] a generic element of $\mathcal{E}_{7.5}$ is not a specialization of a generic element of $\mathcal{E}_8$;

\item[$\bullet$] a generic element of $\mathcal{E}_8$ is not a specialization of a generic element of $\mathcal{E}_{7.5}$;

\item[$\bullet$] a generic element of $\mathcal{E}_9$ is a specialization of a generic element of $\mathcal{E}_8$.
\end{itemize}
\end{pro}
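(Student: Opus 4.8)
The plan is to handle the four assertions by different means: a parameter count for the dimension of $\mathcal{E}_6$, explicit closed degenerations for the two inclusions into $\overline{\mathcal{E}_7}$ and into $\overline{\mathcal{E}_8}$, and the rank of the (canonical) quadric $Q$ together with a dimension equality for the two non-specialization statements.

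For $\dim\mathcal{E}_6=38$ I would parametrize the family by its base data. Here $\mathcal{C}_1$ is a smooth twisted quartic, so $\mathfrak{p}_a(\mathcal{C}_1)=0$ and Proposition \ref{propB} forces $\mathcal{C}_2$ to be a smooth quintic of arithmetic genus $1$ with $\omega_{\mathcal{C}_2}=\mathcal{O}_{\mathcal{C}_2}$. The degree of the normal bundle is $\deg N_{\mathcal{C}_2/\mathbb{P}^3_\mathbb{C}}=4\cdot 5+2\cdot 1-2=20$, and since $\mathfrak{p}_a(\mathcal{C}_2)=1$ one has $\chi(N_{\mathcal{C}_2})=20$, so the Hilbert scheme of such $\mathcal{C}_2$ is smooth of dimension $20$. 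A Riemann--Roch computation gives $\mathrm{h}^0\mathcal{O}_{\mathcal{C}_2}(3h)=15$, hence $\mathrm{h}^0\mathcal{I}_{\mathcal{C}_2}(3h)=5$. A \emph{general} $4$-dimensional subspace of this $5$-dimensional space defines a map that factors through a multiple cover of the cubic image of $\mathbb{P}^3_\mathbb{C}$ in $\vert\mathcal{I}_{\mathcal{C}_2}(3)\vert\cong\mathbb{P}^4_\mathbb{C}$, hence is not birational; birationality forces $\Lambda_\psi$ to be the subspace $\mathrm{H}^0\mathcal{I}_{\mathcal{C}_2\cup p_1}(3h)$ of cubics vanishing at one extra base point $p_1$, which is exactly one condition on $\Lambda_\psi$. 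Counting $20$ for $\mathcal{C}_2$, $3$ for $p_1$ (which then determines $\Lambda_\psi$) and $15$ for the action of $\mathrm{PGL}(4;\mathbb{C})$ on the target yields $\dim\mathcal{E}_6=20+3+15=38$, following the template already used for $\dim\mathcal{E}_3$ and $\dim\mathcal{E}_4$.

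For the inclusions $\mathcal{E}_{7.5}\cup\mathcal{E}_8\subset\overline{\mathcal{E}_7}$ I would realize both strata as closed degenerations of the generic datum of $\mathcal{E}_7$, in which $Q$ has rank $4$ and the generic cubic of $\Lambda_\psi$ has an ordinary node at $p$. The key observation is that, since $\Lambda_\psi\subset(Q\mathcal{I}_p,\mathcal{S}_1,\mathcal{S}_2)$, the quadratic part at $p$ of an element $Q\ell$ with $\ell\in\mathcal{I}_p$ equals $Q_1\ell$, where $Q_1$ is the linear part of $Q$ at $p$, i.e. the equation of $\mathrm{T}_pQ$; this is a rank $2$ form one of whose biplanes is $\mathrm{T}_pQ$. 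Hence, degenerating $\mathcal{S}_1,\mathcal{S}_2$ so that the rank of the quadratic part of the generic cubic drops from $3$ to $2$ produces in the limit precisely a binode whose biplane lies in $\mathrm{T}_pQ$; keeping $Q$ of rank $4$ this gives $\mathcal{E}_{7.5}\subset\overline{\mathcal{E}_7}$. Degenerating instead $Q$, within the quadrics through $p$, to an irreducible rank $3$ cone with vertex $p$ is again a closed condition and gives $\mathcal{E}_8\subset\overline{\mathcal{E}_7}$. In each case I would exhibit an explicit one-parameter family, as in the proof of Theorem \ref{thm:comp33}, and check that the bidegree remains $(3,4)$ and that $\mathcal{C}_2$ does not acquire extra components along the way. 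The same kind of degeneration, lowering the rank of the cone $Q$ from $3$ to $2$ and thus turning the configuration $(2,3)$ into the configuration $(2,4)$ double point of contact, gives $\mathcal{E}_9\subset\overline{\mathcal{E}_8}$, compatibly with a dimension check showing $\dim\mathcal{E}_9<\dim\mathcal{E}_8$ so that the limit is generic in $\mathcal{E}_9$.

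The two non-specialization statements rest on the fact that, by Theorem \ref{Thm:genre1}, the quadric $Q$ containing $\mathcal{C}_2$ is unique, hence a genuine invariant of $\psi$ whose rank is upper semicontinuous in families. As $Q$ has rank $4$ on $\mathcal{E}_{7.5}$ and rank $3$ on $\mathcal{E}_8$, no limit of elements of $\mathcal{E}_8$ can have $Q$ of rank $4$, which shows that a generic element of $\mathcal{E}_{7.5}$ is not a specialization of a generic element of $\mathcal{E}_8$. For the reverse direction the rank does not suffice, and I would instead compute $\dim\mathcal{E}_{7.5}$ and $\dim\mathcal{E}_8$ by the same parameter count as above (for $\mathcal{E}_8$ the rank $3$ condition costs one parameter on $Q$ but fixes $p$ as its vertex, while for $\mathcal{E}_{7.5}$ the binode-with-biplane condition on the generic cubic is imposed with $Q$ smooth at $p$); these give $\dim\mathcal{E}_{7.5}=\dim\mathcal{E}_8$, whence, both strata being irreducible and distinct, neither generic part can lie in the closure of the other. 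The main obstacle I anticipate is precisely the bookkeeping in this last count: one must verify that the codimension of the binode-with-biplane condition inside $\overline{\mathcal{E}_7}$ equals that of the cone condition so that the two dimensions agree exactly, and one must check at each degeneration that the limiting base scheme stays reduced of the expected degree and keeps the bidegree $(3,4)$, so that the limit genuinely lands in the intended stratum.
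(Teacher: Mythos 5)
Most of your plan matches the paper. The count $\dim\mathcal{E}_6=20+3+15=38$ is correct and is exactly the bookkeeping the paper intends when it says the argument is ``similar to Proposition \ref{Pro:incidence}'' (one slip: the image of $\mathbb{P}^3_\mathbb{C}$ under the full $5$-dimensional system of cubics through the elliptic quintic is a \emph{quadric} threefold of $\mathbb{P}^4_\mathbb{C}$, not a cubic, since three general members meet in $3\cdot 4-10=2$ free points; this does not affect the conclusion that birationality forces the extra base point $p_1$). Your degenerations giving $\mathcal{E}_{7.5}\cup\mathcal{E}_8\subset\overline{\mathcal{E}_7}$ and $\mathcal{E}_9\subset\overline{\mathcal{E}_8}$ agree with the mechanism the paper leaves implicit: all these strata admit the presentation $\Lambda_\psi=\mathrm{H}^0\big(((Q\mathcal{I}_p,\mathcal{S}_1,\mathcal{S}_2)\cap\mathcal{I}_{p_1})(3)\big)$ used in the proof of Theorem \ref{thm:comp34}, an irreducible family whose general member lies in $\mathcal{E}_7$. (For $\mathcal{E}_9$, though, your parenthetical ``$\dim\mathcal{E}_9<\dim\mathcal{E}_8$, so the limit is generic'' is a non sequitur: genericity of the limit follows because a \emph{generic} rank-$2$ quadric singular at $p$, with all auxiliary data generic, is a limit of rank-$3$ cones with vertex $p$ and the same data, not from any dimension inequality.) Your first non-specialization argument --- semicontinuity of the rank of the quadric $Q$, a well-defined invariant by the uniqueness statement in Theorem \ref{Thm:genre1} --- is precisely the paper's.

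The genuine gap is the remaining bullet, that a generic element of $\mathcal{E}_8$ is not a specialization of elements of $\mathcal{E}_{7.5}$. You reduce it to the equality $\dim\mathcal{E}_{7.5}=\dim\mathcal{E}_8$ plus the irreducibility of both strata, and then explicitly defer those verifications (``the main obstacle I anticipate is precisely the bookkeeping in this last count''), so this direction is simply not proved; note also that only the inequality $\dim\mathcal{E}_8\geq\dim\mathcal{E}_{7.5}$ would be needed once the rank argument is in place, but neither it nor the irreducibility claims are established in your text. The paper settles the point with a direct closed invariant that rank bookkeeping cannot see: for $\psi\in\mathcal{E}_{7.5}$ the curve $\mathcal{C}_2$ is \emph{always reducible}, whereas for a generic $\psi\in\mathcal{E}_8$ it is irreducible and reduced; since ``irreducible and reduced'' is an open condition in the family of curves $\mathcal{C}_2$ (degree and arithmetic genus being fixed, equal to $(5,2)$ by Theorem \ref{Thm:genre1}), no generic element of $\mathcal{E}_8$ can arise as a limit. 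The geometric reason for reducibility in $\mathcal{E}_{7.5}$ is the configuration $(2,3)'$: there $\mathcal{C}_2$ has a point of multiplicity $3$ at $p$ while lying on $Q$, which is \emph{smooth} at $p$; a triple point sitting on a smooth surface germ has $\delta$-invariant at least $3$, so an irreducible $\mathcal{C}_2$ would have geometric genus at most $\mathfrak{p}_a(\mathcal{C}_2)-3=-1$, absurd. In $\mathcal{E}_8$, by contrast, $Q$ is a cone with vertex $p$, the triple point can be spatial ($\delta=2$), and $\mathcal{C}_2$ is generically an irreducible rational quintic. Replacing your dimension count by this observation closes the gap.
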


\begin{proof}
The arguments to establish $\dim\mathcal{E}_6=38$ are similar to those used in the proof of Proposition~\ref{Pro:incidence}.

Let us justify that a generic element of $\mathcal{E}_{7.5}$ is not a specialization of a generic element of $\mathcal{E}_8$ (we take the notations of \S\ref{Subsec:cuboquartic}): as we see when $\psi\in\mathcal{E}_8$ the quadric $Q$ is always singular whereas it is not the case when $\psi\in\mathcal{E}_{7.5}$. Conversely if $\psi$ belongs to $\mathcal{E}_{7.5}$ then $\mathcal{C}_2$ is reducible but if $\psi$ belongs to $\mathcal{E}_8$ the curve $\mathcal{C}_2$ can be irreducible and reduced; hence a generic element of $\mathcal{E}_8$ is not a specialization of a generic element of $\mathcal{E}_{7.5}$.
\end{proof}

\begin{thm}\label{thm:comp34}
The set $\mathfrak{ruled}_{3,4}$ is an irreducible component of $\mathrm{Bir}_{3,4}(\mathbb{P}^3_\mathbb{C})$. There is only one another irreducible component in $\mathrm{Bir}_{3,4}(\mathbb{P}^3_\mathbb{C})$.
\end{thm}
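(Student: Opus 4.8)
The plan is to reduce the statement to a single inclusion of closures and then to establish that inclusion by an explicit one-parameter degeneration, in the spirit of Theorem \ref{thm:comp33}. First I would record that $\mathfrak{ruled}_{3,4}$ is already an irreducible component by Proposition \ref{Pro:compirr}, and that the two arguments given there (the integer $\deg\mathcal{I}_\psi$ cannot drop under specialization while the bidegree is fixed, and the general surface of $\Lambda_\psi$ passes from non-isolated to isolated singularities) show that neither $\mathfrak{ruled}_{3,4}$ nor the non-ruled locus lies in the closure of the other. It then remains to treat the non-ruled maps. For these, $\mathcal{C}_1$ is a non-degenerate curve of degree $4$ (Lemma \ref{Lem:notonaplane}), hence $\mathfrak{p}_a(\mathcal{C}_1)\in\{0,1\}$; by Corollary \ref{cor:formule}, Proposition \ref{propB} and Theorem \ref{Thm:genre1} this forces $\mathcal{C}_2$ to have degree $9-d=5$ and arithmetic genus $1$ (smooth $\mathcal{C}_1$, family $\mathcal{E}_6$) or $2$ (the $\mathcal{E}_7$ family). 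Equivalently, every non-ruled map lies in $\mathrm{Bir}_{3,4,1}(\mathbb{P}^3_\mathbb{C})$ or $\mathrm{Bir}_{3,4,2}(\mathbb{P}^3_\mathbb{C})$, both irreducible by Theorem \ref{thmC}, with generic members $\mathcal{E}_6$ and $\mathcal{E}_7$. Thus $\mathrm{Bir}_{3,4}(\mathbb{P}^3_\mathbb{C})=\mathfrak{ruled}_{3,4}\cup\overline{\mathcal{E}_6}\cup\overline{\mathcal{E}_7}$ as a union of irreducible closed sets, and the theorem is equivalent to the single inclusion $\overline{\mathcal{E}_7}\subset\overline{\mathcal{E}_6}$ (the direction being dictated by $\dim\mathcal{E}_6=38$ and the fact that $\mathcal{E}_7$ is the more special family).

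To prove this inclusion I would construct, exactly as in the proof of Theorem \ref{thm:comp33}, a family $\psi_t$ of cubic maps depending on generic auxiliary data, with $\psi_t\in\mathcal{E}_6$ for $t\neq 0$ and $\psi_0$ a generic element of $\mathcal{E}_7$. The geometric content guiding the construction is the behaviour of $\mathcal{C}_2$: a degree $5$, genus $1$ curve lies on no quadric, whereas a degree $5$, genus $2$ curve lies on a unique quadric $Q$, and it is precisely this distinguished quadric that governs the passage from $\mathcal{E}_6$ to $\mathcal{E}_7$. Starting from the description of $\mathcal{E}_7$ (a quadric $Q$, a point $p\in Q$, a line $\ell\subset Q$, and cubics $\mathcal{S}_1,\mathcal{S}_2$ singular at $p$ with $\mathcal{I}_{\mathcal{C}_2}=(Q,\mathcal{S}_1,\mathcal{S}_2)$, the curve $\mathcal{C}_2$ being linked to $\ell$ in $Q\cap\mathcal{S}_1$), I would deform the four cubics spanning $\Lambda_{\psi_0}$ so that for $t\neq 0$ the base curve smooths to an elliptic quintic lying on no quadric and the singular point $p$ of $\mathcal{C}_1$ is resolved, while the bidegree $(3,4)$ and birationality are preserved. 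The verification that $\mathfrak{p}_a(\mathcal{C}_1)$ drops from $1$ to $0$ in the limit is then a liaison computation via Lemma \ref{Lem:liaison} and Corollary \ref{cor:formule}, together with the surjectivity/base-point statements of Corollary \ref{cor:cubics}.

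Since $\mathrm{Bir}_{3,4,2}(\mathbb{P}^3_\mathbb{C})$ is irreducible (Theorem \ref{thmC}) and the construction applies to generic data, a generic element of $\mathcal{E}_7$ lies in the irreducible closed set $\overline{\mathcal{E}_6}$, whence $\overline{\mathcal{E}_7}\subset\overline{\mathcal{E}_6}$. Combined with the first paragraph, this gives exactly two irreducible components of $\mathrm{Bir}_{3,4}(\mathbb{P}^3_\mathbb{C})$, namely $\mathfrak{ruled}_{3,4}$ and $\overline{\mathcal{E}_6}$.

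The hard part will be producing a degeneration that is \emph{genuinely generic} in $\mathcal{E}_7$, rather than one merely hitting a single biclass: because there are infinitely many biclasses (the relevant dimensions exceed $\dim(\mathrm{PGL}(4;\mathbb{C})\times\mathrm{PGL}(4;\mathbb{C}))=30$), exhibiting one limit $\psi_0\in\mathcal{E}_7$ and spreading it out under the left–right action does not suffice to cover a dense subset of $\mathcal{E}_7$. Closely related is the delicate point that the arithmetic genus of $\mathcal{C}_1$ must jump from $1$ to $0$ in the limit; a naive flat smoothing of $\mathcal{C}_1$ as a reduced curve keeps $\mathfrak{p}_a$ constant, so one has to track the full scheme structure of the base locus — in particular the isolated base point off $Q$ appearing in the reciprocal construction of $\mathcal{E}_7$ — and check that it collides with the base curve in the right way as $t\to 0$. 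Making this bookkeeping precise, so that $\psi_t$ stays birational of bidegree $(3,4)$ throughout and the limit data are generic, is the essential obstacle.
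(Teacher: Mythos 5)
Your overall strategy coincides with the paper's: Proposition \ref{Pro:compirr} handles $\mathfrak{ruled}_{3,4}$, the problem reduces to showing the non-ruled locus is one component with $\overline{\mathcal{E}_6}$ on top, and this is proved by a one-parameter degeneration landing on a \emph{generic} element of $\mathcal{E}_7$. But your proposal stops exactly where the proof begins. The entire content of the paper's argument is the construction you defer: it first shows that every $\psi$ in $\mathcal{E}_7\cup\mathcal{E}_{7.5}\cup\mathcal{E}_8\cup\mathcal{E}_9\cup\mathcal{E}_{10}$ has $\Lambda_\psi=\mathrm{H}^0\big(((Q\mathcal{I}_p,\mathcal{S}_1,\mathcal{S}_2)\cap\mathcal{I}_{p_1})(3)\big)$ with the determinantal shape $Q=\det\left[\begin{smallmatrix} L_0 & L_1\\ L_2 & L_3\end{smallmatrix}\right]$, $\mathcal{S}_1=L_0Q_1+L_1Q_2$, $\mathcal{S}_2=L_2Q_1+L_3Q_2$, so that all five strata lie in one irreducible set $\mathscr{E}$; it then writes down the explicit $2\times 4$ matrix $M_t=\left[\begin{smallmatrix} Jv\\ Nv\end{smallmatrix}\right]$ (with $N$ skew-symmetric and $v$ depending linearly on $t$), checks that for $t\neq 0$ its $2\times 2$ minors cut out a generic elliptic quintic as in $\mathcal{E}_6$, computes the limit ideal at $t=0$ to be that of the residual of the line $\ell=(z_1,z_3)$ in the complete intersection $(Q,\mathcal{S}_2)$, and finally --- precisely the ``essential obstacle'' you name --- verifies genericity of the limit by showing that modulo $Q$ one may arrange $\mathcal{S}_2=z_3A+z_2B$ with $A$, $B$ generic in $\mathbb{C}[z_0,z_1,z_2]_2$, i.e. that $\mathcal{S}_2$ is a generic cubic singular at $p$ containing $\ell$. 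Since you explicitly flag this genericity bookkeeping as unresolved and give no mechanism for producing a family hitting a dense subset of $\mathcal{E}_7$ rather than a single biclass (your own correct observation about $\dim(\mathrm{PGL}(4;\mathbb{C})\times\mathrm{PGL}(4;\mathbb{C}))=30$ shows why one example does not suffice), the proposal is a correct plan but not a proof: it is missing the step that carries all the weight.

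A secondary problem is the appeal to Theorem \ref{thmC} for the irreducibility of $\mathrm{Bir}_{3,4,1}(\mathbb{P}^3_\mathbb{C})$ and $\mathrm{Bir}_{3,4,2}(\mathbb{P}^3_\mathbb{C})$. In the paper the $(3,4)$ part of that statement is recorded after Theorem \ref{thm:comp34} with no separate proof; the irreducibility of the genus-$2$ stratum is in effect the first half of the paper's proof of Theorem \ref{thm:comp34} (the uniform parametrization giving $\mathscr{E}$), so quoting Theorem \ref{thmC} at this point is circular within the paper's architecture. The circularity is repairable: the analysis of \S\ref{Subsec:cuboquartic} --- the triplet $(Q,p,\mathcal{S}_1)$ with $\ell\subset Q$, $\mathcal{C}_2$ the residual of $\ell$ in $(Q,\mathcal{S}_1)$, and the statement that $\psi$ lies in the closure of the maps $\phi$ with $\Lambda_\phi=\mathrm{H}^0\big(((Q\mathcal{I}_p,\mathcal{S}_1,\mathcal{S}_2)\cap\mathcal{I}_{p'})(3)\big)$ --- already provides an irreducible parameter space dominating the non-ruled locus with singular $\mathcal{C}_1$, and you should cite that rather than the theorem it feeds. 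Your remaining reductions (Proposition \ref{Pro:compirr} for the ruled component, Lemma \ref{Lem:notonaplane} forcing $\mathfrak{p}_a(\mathcal{C}_1)\in\{0,1\}$, liaison giving $\mathcal{C}_2$ of degree $5$ and genus $1$ or $2$, and $\dim\mathcal{E}_6=38$ fixing the direction of the inclusion) agree with the paper and are sound.
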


\begin{proof}
According to Proposition \ref{Pro:compirr} the set $\mathfrak{ruled}_{3,4}$ is an irreducible component of $\mathrm{Bir}_{3,4}(\mathbb{P}^3_\mathbb{C})$.

Any element $\psi$ of $\mathcal{E}_7\cup\mathcal{E}_{7.5}\cup\mathcal{E}_8\cup\mathcal{E}_9\cup\mathcal{E}_{10}$ satisfies the following property: 
\[
\Lambda_\psi=\mathrm{H}^0\big(((Q\mathcal{I}_p,\mathcal{S}_1,\mathcal{S}_2)\cap \mathcal{I}_{p_1})(3)\big)
\]
where $p$ belongs to $Q$, $p_1$ is an ordinary base point, and
\[
Q=\det\left[\begin{array}{cc}
L_0 & L_1\\
L_2 & L_3
\end{array}
\right],\quad \mathcal{S}_1=L_0Q_1+L_1Q_2,\quad \mathcal{S}_2=L_2Q_1+L_3Q_2 
\]
with $L_i\in\mathbb{C}[z_0,z_1,z_2,z_3]_1$, $Q_i\in\mathbb{C}[z_0,z_1,z_2]_2$. So $\mathcal{E}_7$, $\mathcal{E}_{7.5}$, $\mathcal{E}_8$ $\mathcal{E}_9$ and $\mathcal{E}_{10}$ belong to the same irreducible component $\mathscr{E}$. 

\medskip

It remains to show that $\mathscr{E}=\overline{\mathcal{E}_6}$: let us consider 
\[
J=\left[\begin{array}{cccc} 
0 & 0 & 0 & 1\\
0 & 0 & 1 & 0\\
0 & -1 & 0 & 0 \\
-1 & 0 & 0 & 0
\end{array}\right], \quad 
N=\left[\begin{array}{cccc}
0 & -z_2 & z_3 & L_0\\
z_2 & 0 & L_1 & L_2\\
-z_3 & -L_1 & 0 & L_3\\
-L_0 & -L_2 & -L_3 & 0
\end{array}\right],\quad 
v=\left[\begin{array}{c}
z_2\\
z_1\\
z_0\\
tz_3 \end{array}\right]
\]
with $L_i$ linear forms and
\[
M_t=\left[\begin{array}{c}
Jv\\
Nv
 \end{array}\right]=
 \left[\begin{array}{cccc}
 tz_3 & z_0 & -z_1 & -z_2\\
 tz_3L_0+Q & q_1 & q_2 & q_3
 \end{array}\right]
\]
with 
\begin{align*}
&Q=z_0z_3-z_1z_2,&& q_1=z_2^2+z_0L_1+tz_3L_2,\\
&q_2=-z_2z_3-z_1L_1+tz_3L_3,&& q_3=-z_2L_0-z_1L_2-z_0L_3.
\end{align*}
For generic $L_i$'s and $t\not=0$ the $2\times 2$ minors of $M_t$ generate the ideal of a generic elliptic quintic curve as in $\mathcal{E}_6$. For $M_0$ the $2\times 2$ minors become $Qz_0$, $Qz_1$, $Qz_2$, $\mathcal{S}_1$, $\mathcal{S}_2$, and $\mathcal{S}_3$ with 
\[
\mathcal{S}_1=-z_2Q,\qquad\mathcal{S}_2=-z_1q_3+z_2q_2,\qquad\mathcal{S}_3=z_0q_3+z_2q_1. 
\]
Therefore the ideal $\mathcal{M}_2$ generated by these minors is
\[
(Qz_0,Qz_1,Qz_2,\mathcal{S}_2,\mathcal{S}_3).
\]
Denote by $\ell$ the line defined by $\mathcal{I}_\ell=(z_1,z_3)$. According to
\[
z_3\mathcal{S}_3=-z_2\mathcal{S}_2+Q(q_3+L_1z_2)\quad\&\quad z_1\mathcal{S}_3=-z_0\mathcal{S}_2-z_2^2Q
\]
$\mathcal{M}_2$ is the ideal of the residual of $\ell$ in the complete intersection of ideals $(Q,\mathcal{S}_2)$. 

It only remains to prove that one can obtain the generic element of $\overline{\mathcal{E}_7}$ with a good choice of the $L_i$'s; in other words it remains to prove that $\mathcal{S}_2$ is generic among the cubics singular at $p$ that contain $\ell$. Modulo $Q$ one can assume that $q_3=-z_3a+b$, with $a$ (resp. $b$) an element of $\mathbb{C}[z_1,z_2]_1$ (resp. $\mathbb{C}[z_0,z_1,z_2]_2$). Then
\[
\mathcal{S}_2=-z_3\big(z_1a+z_2^2\big)+z_1\big(b-z_2L_1\big);
\]
in conclusion $\mathcal{S}_2=z_3A+z_2B$ for generic $A$ and $B$ in $\mathbb{C}[z_0,z_1,z_2]_2$.
\end{proof}

\subsection{Relations between $\overline{\mathrm{Bir}_{3,3}(\mathbb{P}^3_\mathbb{C})}$ and $\overline{\mathrm{Bir}_{3,4}(\mathbb{P}^3_\mathbb{C})}$}

One can now state the following result:

\begin{pro}\label{Pro:inter}
The set $\mathfrak{ruled}_{3,3}$ intersects the closure of any irreducible component of\, $\overline{\mathrm{Bir}_{3,4}(\mathbb{P}^3_\mathbb{C})}$.
\end{pro}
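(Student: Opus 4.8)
The plan is to invoke Theorem \ref{thm:comp34}, which asserts that $\mathrm{Bir}_{3,4}(\mathbb{P}^3_\mathbb{C})$ has exactly two irreducible components, $\mathfrak{ruled}_{3,4}$ and $\overline{\mathcal{E}_6}$, and to treat them separately. For the first there is nothing to prove: by Lemma \ref{Lem:inclruled} one has $\mathfrak{ruled}_{3,3}\subset\overline{\mathfrak{ruled}_{3,4}}$, so $\mathfrak{ruled}_{3,3}$ already lies inside the closure of this component. All the content is therefore to show that $\mathfrak{ruled}_{3,3}\cap\overline{\mathcal{E}_6}\neq\emptyset$. As anticipated in the footnote to Proposition \ref{Pro:compirr}, this amounts to specializing a generic non-ruled $(3,4)$ map to a ruled $(3,3)$ map, which becomes possible precisely because we no longer insist on keeping the bidegree fixed.

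To produce such a specialization I would exhibit a one-parameter family $(\Lambda_t)$ of four-dimensional subspaces of $\mathrm{H}^0(\mathcal{O}_{\mathbb{P}^3_\mathbb{C}}(3))$ with two features: for $t\neq 0$ the system $\Lambda_t$ equals $\Lambda_{\psi_t}$ for a generic $\psi_t\in\mathcal{E}_6$, its base locus being a smooth elliptic quintic $\mathcal{C}_2$ together with one ordinary base point; and at $t=0$ every cubic of $\Lambda_0$ is singular along one fixed line $\delta=\{z_0=z_1=0\}$, i.e. $\Lambda_0\subset\mathcal{I}_\delta^2=(z_0,z_1)^2$. In the spirit of the determinantal presentation used in the proof of Theorem \ref{thm:comp34}, one writes the four spanning cubics as explicit polynomials $F_i(t)$, chosen so that $F_i(0)\in\mathcal{I}_\delta^2$ for each $i$ while the perturbation terms carried by $t$ destroy the common singular line for $t\neq 0$; the latter is what guarantees that $\psi_t$ has only isolated base singularities, hence is non-ruled and lands in $\mathcal{E}_6$. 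Since a general member of $\Lambda_0$ is singular along $\delta$, the strict transform of a generic plane under $\psi_0^{-1}$ is a ruled cubic surface, so $\psi_0$ is ruled; it only remains to compute its bidegree.

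The verification at $t=0$ is the heart of the matter and where I expect the main difficulty. First one must check that $\psi_0$ is still birational, for which I would use the numerical criterion of Lemma \ref{lem:bir}, evaluating $\deg\mathcal{C}_1$ together with the local contributions $\mathrm{length}(\mathcal{S}\cap\mathcal{C}_1)_{\{p\}}$ along the degenerate base locus. Second, and more delicately, one must control how the base scheme jumps: whereas the base scheme of $\Lambda_t$ has degree $5$ for $t\neq 0$, the limit system $\Lambda_0$ absorbs the entire double line $\mathcal{I}_\delta^2$, so it is no longer the flat limit of the base schemes of the nearby fibres. The aim is to identify $\mathcal{I}_{\psi_0}$ with the normal form $\mathcal{I}_\delta^2\cap\mathcal{I}_{\Delta_1}\cap\mathcal{I}_{\Delta_2}\cap\mathcal{I}_K$ of Lemma \ref{Lem:ruled}, where $\Delta_1,\Delta_2$ are two lines meeting $\delta$ and $K$ is a length-$2$ scheme; matching the degenerate configuration to this model, by tracking the two residual lines and the residual points through the degeneration, is the subtle step. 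Once this identification is made, Lemma \ref{Lem:ruled} ensures $\psi_0\in\mathfrak{ruled}_{3,3}$, whence $\psi_0\in\mathfrak{ruled}_{3,3}\cap\overline{\mathcal{E}_6}$ and the proposition follows.
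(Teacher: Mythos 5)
Your reduction is exactly the paper's: Lemma \ref{Lem:inclruled} disposes of the component $\mathfrak{ruled}_{3,4}$, and by Theorem \ref{thm:comp34} there is a unique other component, so everything comes down to showing that $\mathfrak{ruled}_{3,3}$ meets the closure of the non-ruled $(3,4)$ maps. But from that point on your proposal stops being a proof: the one-parameter family $(\Lambda_t)$, whose existence is the entire content of the proposition, is never exhibited. You describe the properties it should have, and you yourself flag as open the two essential verifications --- birationality of $\psi_0$ via Lemma \ref{lem:bir}, and the identification of $\mathcal{I}_{\psi_0}$ with the normal form of Lemma \ref{Lem:ruled} --- calling them ``the heart of the matter''. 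A plan that defers its own key steps establishes nothing beyond the trivial half handled by Lemma \ref{Lem:inclruled}; there is no evidence in the write-up that a family with the stated list of properties exists at all.

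Moreover, your plan is calibrated to be harder than necessary, which is probably why it stalls. You insist that $\psi_t$ be \emph{generic} in $\mathcal{E}_6$, with base curve a smooth elliptic quintic; but since Theorem \ref{thm:comp34} guarantees a unique non-ruled component, it suffices that $\psi_t$ be non-ruled, however special. The paper exploits precisely this freedom: it takes $\mathcal{C}_2$ to be a union of lines --- the double line of ideal $(z_0,z_1^2)$, the moving line $(z_0-\varepsilon z_2,z_1)$, and $\ell_1=(z_0,z_3)$, $\ell_2=(z_1,z_2)$ --- computes by hand
\[
\mathcal{J}_\varepsilon=(z_0z_1,\;z_0^2z_2+\varepsilon z_0z_2^2,\;z_1^2z_3),
\]
sets $\mathcal{I}_\varepsilon=z_0z_1(z_0,z_1,z_2)+(z_0^2z_2+\varepsilon z_0z_2^2,z_1^2z_3)$, and adds one general ordinary base point $p_2$. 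For $\varepsilon\neq 0$ this gives $\psi_\varepsilon\in\mathrm{Bir}_{3,4}(\mathbb{P}^3_\mathbb{C})\smallsetminus\mathfrak{ruled}_{3,4}$, hence in the closure of the non-ruled component; at $\varepsilon=0$ every generator visibly lies in $(z_0,z_1)^2$, so all cubics of $\Lambda_{\psi_0}$ are singular along $\delta$ and $\psi_0\in\mathfrak{ruled}_{3,3}$. The delicate flat-limit bookkeeping you anticipate (tracking residual lines and a length-$2$ scheme through the degeneration of a generic elliptic quintic) never arises: moving a single line of $\mathcal{C}_2$ onto $\delta$ does all the work, and the bidegree drop from $(3,4)$ to $(3,3)$ is exactly the phenomenon announced in the footnote to Proposition \ref{Pro:compirr} that you correctly cite. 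To repair your argument, either carry out your harder generic-degeneration program in full, or replace it by such an explicit degenerate family.
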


\begin{proof}
According to Lemma \ref{Lem:inclruled} it is sufficient to prove that $\mathfrak{ruled}_{3,3}$ intersects the closure of $(3,4)$ birational maps that are non-ruled.

Let us consider an element $\psi$ of $\mathrm{Bir}_{3,4}(\mathbb{P}^3_\mathbb{C})$ whose $\mathcal{C}_2$ is the union of the lines of ideals 
\[
\mathcal{I}_\delta=(z_0,z_1^2), \quad(z_0-\varepsilon z_2,z_1), \quad \mathcal{I}_{\ell_1}=(z_0,z_3), \quad \mathcal{I}_{\ell_2}=(z_1,z_2).
\] 
Denote by $\mathcal{J}_\varepsilon=(z_0,z_1^2)\cap(z_0-\varepsilon z_2,z_1)\cap(z_0,z_3)\cap(z_1,z_2)$. One can check that 
\[
\mathcal{J}_\varepsilon=(z_0z_1,z_0^2z_2+\varepsilon z_0z_2^2,z_1^2z_3).
\] 
Set $\mathcal{I}_\varepsilon=z_0z_1(z_0,z_1,z_2)+(z_0^2z_2+\varepsilon z_0z_2^2,z_1^2z_3)$. For a general $p_2$ the map $\psi_\varepsilon$ defined by $\Lambda_{\psi_\varepsilon}=\mathrm{H}^0\big((\mathcal{I}_\varepsilon\cap \mathcal{I}_{p_2})(3)\big)$ is birational; furthermore
\begin{itemize}
\item[$\bullet$] $\psi_\varepsilon\in\mathrm{Bir}_{3,4}(\mathbb{P}^3_\mathbb{C})\smallsetminus\mathfrak{ruled}_{3,4}$ for $\varepsilon\not=0$;

\item[$\bullet$] $\psi_0\in\mathfrak{ruled}_{3,3}$.
\end{itemize}
\end{proof}

As in the case of $(3,3)$ birational maps one has the following statement:

\begin{thm}
If $\mathfrak{p}_2\in\{1,\,2\}$, then $\mathrm{Bir}_{3,4,\mathfrak{p}_2}(\mathbb{P}^3_\mathbb{C})$ is non-empty and irreducible.
\end{thm}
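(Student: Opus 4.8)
The plan is to read off from Proposition \ref{propB} that the two values $\mathfrak{p}_2=1$ and $\mathfrak{p}_2=2$ correspond exactly to the two cases of the general discussion of \S\ref{Subsec:cuboquartic}, and then to exhibit for each an irreducible parameter space surjecting onto the set. A non-ruled $(3,4)$ birational map satisfies $\mathfrak{p}_a(\mathcal{C}_1)\in\{0,1\}$, and by Proposition \ref{propB} the value $\mathfrak{p}_a(\mathcal{C}_1)=0$ forces $\mathcal{C}_2$ to have arithmetic genus $9-2\cdot 4=1$, while $\mathfrak{p}_a(\mathcal{C}_1)=1$ forces genus $10-2\cdot 4=2$. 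Since $\deg\mathcal{C}_2=9-d=5$ for every non-ruled map, this identifies $\mathrm{Bir}_{3,4,1}(\mathbb{P}^3_\mathbb{C})$ with the first case of \S\ref{Subsec:cuboquartic} (smooth $\mathcal{C}_1$, elliptic quintic $\mathcal{C}_2$, i.e. the family $\mathcal{E}_6$) and $\mathrm{Bir}_{3,4,2}(\mathbb{P}^3_\mathbb{C})$ with the second case (the union $\mathcal{E}_7\cup\mathcal{E}_{7.5}\cup\mathcal{E}_8\cup\mathcal{E}_9\cup\mathcal{E}_{10}$). Non-emptiness of each is witnessed by the explicit members of these \textsc{Hudson} families given in \S\ref{Subsec:cuboquartic}.

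For $\mathfrak{p}_2=2$ the proof of Theorem \ref{thm:comp34} already carries out most of the work: every element of the union satisfies $\Lambda_\psi=\mathrm{H}^0\big(((Q\mathcal{I}_p,\mathcal{S}_1,\mathcal{S}_2)\cap\mathcal{I}_{p_1})(3)\big)$ with $Q$, $\mathcal{S}_1$, $\mathcal{S}_2$ in the displayed determinantal form. I would therefore consider the parameter space of tuples $(L_0,L_1,L_2,L_3,Q_1,Q_2,p_1)$, with $L_i\in\mathbb{C}[z_0,z_1,z_2,z_3]_1$, $Q_i\in\mathbb{C}[z_0,z_1,z_2]_2$ and $p_1\in\mathbb{P}^3_\mathbb{C}$, which is visibly irreducible, together with the morphism sending such a tuple to $\Lambda_\psi$. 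By the reciprocal construction of \S\ref{Subsec:cuboquartic} the image of this morphism is exactly $\mathrm{Bir}_{3,4,2}(\mathbb{P}^3_\mathbb{C})$, and the image of an irreducible variety is irreducible.

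For $\mathfrak{p}_2=1$ the curve $\mathcal{C}_2$ is a smooth elliptic quintic whose ideal is generated by its five cubics (Corollary \ref{cor:cubics}, since $\omega_{\mathcal{C}_1}(h)$ is globally generated when $\mathcal{C}_1$ is a smooth rational quartic), and $\psi$ is recovered from the pair $(\mathcal{C}_2,\Lambda_\psi)$ where $\Lambda_\psi$ is a four-dimensional subspace of $\mathrm{H}^0(\mathcal{I}_{\mathcal{C}_2}(3))$. The family of elliptic quintics in $\mathbb{P}^3_\mathbb{C}$ is irreducible and the choice of $\Lambda_\psi$ ranges over a Grassmannian, so the total space of such pairs is irreducible and surjects onto $\mathrm{Bir}_{3,4,1}(\mathbb{P}^3_\mathbb{C})$; alternatively one may use directly the matrix $M_t$ (for $t\neq 0$) of the proof of Theorem \ref{thm:comp34}, whose $2\times 2$ minors define a generic elliptic quintic. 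Either way $\mathrm{Bir}_{3,4,1}(\mathbb{P}^3_\mathbb{C})$ is irreducible.

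The main obstacle is to be sure that each parametrization surjects onto the full set rather than merely dominating it, and that the locus in the parameter space where the construction yields an honest $(3,4)$ birational map is irreducible. For the latter I would invoke the numerical criterion of Lemma \ref{lem:bir}: birationality, together with the prescribed bidegree, is an open condition on the irreducible parameter space, and an open subset of an irreducible variety is irreducible; non-emptiness of this open locus is guaranteed by the explicit examples. Surjectivity in the $\mathfrak{p}_2=2$ case is exactly the structural statement already established in Theorem \ref{thm:comp34}, while in the $\mathfrak{p}_2=1$ case it follows from the fact that $\mathcal{I}_{\mathcal{C}_2}$ is generated by cubics, so that every such $\Lambda_\psi$ indeed sits inside $\mathrm{H}^0(\mathcal{I}_{\mathcal{C}_2}(3))$ and is thus captured by the pair $(\mathcal{C}_2,\Lambda_\psi)$.
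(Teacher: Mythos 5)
Your proposal is correct and follows essentially the same route as the paper, which states this theorem without a separate proof precisely because the two values $\mathfrak{p}_2=1,2$ are identified (via Corollary \ref{cor:formule} and Proposition \ref{propB}, using that $\mathcal{C}_1$ is a rational non-planar quartic) with the two cases of \S\ref{Subsec:cuboquartic}, and irreducibility is read off from the parametrizations in the proof of Theorem \ref{thm:comp34}: the elliptic-quintic family $\mathcal{E}_6$ for $\mathfrak{p}_2=1$ and the determinantal family $\Lambda_\psi=\mathrm{H}^0\big(((Q\mathcal{I}_p,\mathcal{S}_1,\mathcal{S}_2)\cap\mathcal{I}_{p_1})(3)\big)$ for $\mathfrak{p}_2=2$. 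The only point to watch is that \S\ref{Subsec:cuboquartic} shows an arbitrary element of $\mathrm{Bir}_{3,4,2}(\mathbb{P}^3_\mathbb{C})$ merely lies in the \emph{closure} of the explicitly parametrized maps $\phi$ (so your map has dense image rather than being literally surjective), but the squeeze argument you sketch --- a set lying between an irreducible subset and its closure is irreducible --- already disposes of this.
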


\section{$(3,5)$-\textsc{Cremona} transformations}\label{Sec:cuboquintic}

\subsection{General description of $(3,5)$ birational maps}

We already find an irreducible component of the set of $(3,5)$ birational maps: $\mathfrak{ruled}_{3,5}$ (Proposition \ref{Pro:compirr}). Let us now consider a $(3,5)$-\textsc{Cremona} transformation $\psi$ such that $\Lambda_\psi$ contains a cubic surface without double line.

\subsubsection{Case: $\mathcal{C}_1$ smooth}\label{subsubsec:C1smooth} In that situation $\deg \omega_{\mathcal{C}_1}(h)=3$ so according to (\ref{eq:blabla2}) the map $\psi$ has two ordinary base points. The curve $\mathcal{C}_2$ has genus $-1$ and does not lie on a quadric; $\mathcal{C}_2$ is the disjoint union of a twisted cubic and a line, that is~$\psi$ belongs to~$\mathcal{E}_{12}$. Indeed suppose that $\psi\not\in\overline{\mathcal{E}_{12}}$, then~$\mathcal{C}_2$ is the union of two smooth conics $\Gamma_1$ and $\Gamma_2$ that do not intersect. Any $\Gamma_i$ is contained in a plane $\mathcal{P}_i$. Denote by $\ell$ the intersection $\mathcal{P}_1\cap\mathcal{P}_2$. As $\#\big(\ell\cap(\Gamma_1\cup\Gamma_2)\big)=4$, all the cubic surfaces that contain $\Gamma_1\cup\Gamma_2$ contain $\ell$. So $\ell\subset \mathcal{C}_2$: contradiction. 

\subsubsection{Case: $\mathcal{C}_1$ not smooth}\label{subsubsec:C1notsmooth} So $\mathfrak{p}_a(\mathcal{C}_1)\geq 1$, and 
\[
\mathfrak{p}_a(\mathcal{C}_2)=\deg \mathcal{C}_2-\deg \mathcal{C}_1+\mathfrak{p}_a(\mathcal{C}_1)=-1+\mathfrak{p}_a(\mathcal{C}_1)\geq 0.
\]
Since $\mathcal{C}_1$ is not in a plane, $\mathfrak{p}_a(\mathcal{C}_1)\leq 2$. Therefore we only have to distinguish the eventualities~$\mathfrak{p}_a(\mathcal{C}_1)=~1$ and $\mathfrak{p}_a(\mathcal{C}_1)=2$. Before looking at any of these eventualities let us introduce the set
\[
\mathscr{C}=\big\{\text{irreducible curves of $\mathbb{P}^3_\mathbb{C}$ of degree $5$ and geometric genus $0$}\big\}
\]
 
\smallskip

$\bullet$ Assume first that $\mathfrak{p}_a(\mathcal{C}_1)=1$. Then $\mathcal{O}_{\mathcal{C}_1}=\omega_{\mathcal{C}_1}$. We will denote by $\pi\colon\mathbb{P}^1_\mathbb{C}\to\mathcal{C}_1$ the norma\-lization of $\mathcal{C}_1$.

\begin{enumerate}
\item[$a_1)$] Suppose first that all the elements of $\Lambda_\psi$ are singular at $p\in\mathbb{P}^3_\mathbb{C}$. Denote by $L$ the $2$-dimensional vector space $\Lambda_\psi\cap\mathrm{H}^0\big((\mathcal{I}_p^2\cap\mathcal{I}_{\mathcal{C}_1})(3h)\big)$ defining $\mathcal{C}_1$ and $\mathcal{C}_2$. By the liaison sequence (\ref{eq:blabla2}) of Lemma~\ref{Lem:liaison} $\frac{\Lambda_\psi}{L}$ gives a vector subspace $u$ of $\mathrm{H}^0\big(\omega_{\mathcal{C}_1}(h)\big)=\mathrm{H}^0\big(\mathcal{O}_{\mathcal{C}_1}(h)\big)$ of dimension $2$. It induces a projection from $\mathcal{C}_1$ to $\vert u^\vee\vert$ that coincides with the restriction of~$\psi$ to $\mathcal{C}_1$; hence this projection has degree $1$. Moreover, via the identification $\mathrm{H}^0\big(\mathcal{O}_{\mathcal{C}_1}(h)\big)=\mathrm{H}^0\big(\pi^*\mathcal{O}_{\mathcal{C}_1}(h)\big)$, $u$ is included in the set $V_1$ of sections of $\mathcal{O}_{\mathbb{P}^1_\mathbb{C}}(5)$ whose base locus contains $\pi^{-1}(p)$; there are two other ordinary base points. 

We would like to show that $\mathcal{C}_2$ moves in an irreducible family. We will do this by deforming $\psi$ (and $\mathcal{C}_2$) while $\mathcal{C}_1$ is fixed. So, $p\in\mathbb{P}^3_\mathbb{C}$ being fixed, let us consider 
\[
\mathcal{R}_{p,1}=\big\{\mathcal{C}\in\mathscr{C}\,\vert\,\{p\}=\mathrm{Sing}\,\mathcal{C},\,\mathfrak{p}_a(\mathcal{C})=1\big\};
\]
the set $\mathcal{R}_{p,1}$ is an irreducible one. Remark that $\mathrm{h}^0\mathcal{I}_{\mathcal{C}}(3h)=\deg\mathcal{C}+\mathfrak{p}_a(\mathcal{C})-1=5$ and $\mathrm{h}^0\big((\mathcal{I}_p^2\cap\mathcal{I}_{\mathcal{C}})(3h))\big)=5-1=4$ because $\mathcal{C}$ has a double point at $p$ for all $\mathcal{C}$ in $\mathcal{R}_{p,1}$.

Let us denote by $F_1$ the set of $(\mathcal{C},L,u)\in\mathcal{R}_{1,p}\times\mathrm{H}^0\big((\mathcal{I}_p^2\cap\mathcal{I}_{\mathcal{C}})(3h)\big)\times V_1$ defined by
\begin{itemize}
\item $L\subset\mathrm{H}^0\big((\mathcal{I}_p^2\cap\mathcal{I}_{\mathcal{C}})(3h)\big)$ of dimension $2$ such that the residual of $\mathcal{C}$ in the complete intersection defined by $L$ has no common component with $\mathcal{C}$, and $\mathcal{C}$ is geometrically linked to a curve denoted by $\mathcal{C}_{2,L}$,
\smallskip
\item $u\subset V_1$ of dimension $2$ such that $\mathbb{P}^1_\mathbb{C}\dashrightarrow\vert u^\vee\vert$ has degree $1$. 
\end{itemize}
The set $F_1$ is irreducible since the choice of $\mathcal{C}$ is irreducible, and thus the choices of $L$ and~$u$ too.

If $(\mathcal{C},L,u)$ belongs to $F_1$, let us set 
\[
h_L\colon\mathrm{H}^0\big(\mathcal{I}_{\mathcal{C}_{2,L}}(3h)\big)\to\mathrm{H}^0\big(\omega_{\mathcal{C}}(h)\big) 
\]
(recall that $\frac{\mathrm{H}^0\big(\mathcal{I}_{\mathcal{C}_{2,L}}(3h)\big)}{L}\simeq\mathrm{H}^0\big(\omega_{\mathcal{C}}(h)\big)$). Consider the map
\[
\kappa_1\colon F_1\to \mathbb{G}\big(4;\mathrm{H}^0\big(\mathcal{O}_{\mathbb{P}^3_\mathbb{C}}(3)\big)\big),\qquad(\mathcal{C},L,u) \mapsto h_L^{-1}(u).
\]

By construction of $F_1$ if $\psi$ is birational, if all elements of $\Lambda_\psi$ are singular at $p$, and $\mathfrak{p}_a(\mathcal{C}_1)=1$, then $\Lambda_\psi$ is in the image of $\kappa_1$.
 
\begin{lem}
The general element of $\mathrm{im}\,\kappa_1$ coincides with $\Lambda_\psi$ for some birational map $\psi$ of $\mathcal{E}_{14}$.
\end{lem}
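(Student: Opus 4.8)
The plan is to take a general triple $(\mathcal{C},L,u)$ in the irreducible variety $F_1$, to set $\Lambda=\kappa_1(\mathcal{C},L,u)=h_L^{-1}(u)$, and to prove that $\Lambda=\Lambda_\psi$ for a birational map $\psi$ of bidegree $(3,5)$; membership in $\mathcal{E}_{14}$ will then follow from the singularity data of $\psi$ together with the inclusion already observed \emph{by construction}. Since $F_1$ is irreducible and $\kappa_1$ is a morphism, the set $\mathrm{im}\,\kappa_1$ is irreducible, so it suffices to establish these properties at a general point.

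First I would pin down the linear system $\Lambda$. In the liaison sequence \eqref{eq:blabla2} applied with $\mathcal{C}_1=\mathcal{C}$ and $\mathcal{C}_2=\mathcal{C}_{2,L}$, the map $h_L$ is the surjection onto $\mathrm{H}^0\big(\omega_{\mathcal{C}}(h)\big)$ with kernel $L=\mathrm{H}^0\big(\mathcal{I}_{\mathcal{C}\cup\mathcal{C}_{2,L}}(3h)\big)$ of dimension $2$; as $u$ has dimension $2$, the preimage $\Lambda=h_L^{-1}(u)$ has dimension $4$ and each of its members contains $\mathcal{C}_{2,L}$. I would then check that $\Lambda$ has no fixed component and that its one-dimensional base locus is exactly $\mathcal{C}_{2,L}$, of degree $4$ and arithmetic genus $0$ by Corollary \ref{cor:formule}, the rest of the base scheme consisting of the double point at $p$ forced by $u\subset V_1$ and of two ordinary points. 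This presents $\Lambda$ as $\Lambda_\psi$ for a degree-$3$ rational map $\psi$ with $\mathcal{C}_2=\mathcal{C}_{2,L}$ and $\mathcal{C}_1=\mathcal{C}$.

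The heart of the argument is birationality. The key observation is that the two generators of $L$ vanish identically along $\mathcal{C}_1=\mathcal{C}$, so the restriction $\psi|_{\mathcal{C}_1}$ factors through the projection $\vert u^\vee\vert\cong\mathbb{P}^1_\mathbb{C}$; the defining requirement on $F_1$ that $\mathbb{P}^1_\mathbb{C}\dashrightarrow\vert u^\vee\vert$ have degree $1$ then makes $\psi|_{\mathcal{C}_1}$ birational onto a line $\ell=\psi(\mathcal{C}_1)$. To turn this into birationality of $\psi$ I would invoke the numerical criterion of Lemma \ref{lem:bir}: here $3\deg\mathcal{C}_1=15$, the length of $\mathcal{C}_1\cap\mathcal{C}_2$ equals $2\deg\mathcal{C}_1-2\mathfrak{p}_a(\mathcal{C}_1)+2=10$, and the point $p$ lies on $\mathcal{C}_2$ by Lemma \ref{Lem:psurc2}; one must verify that, for a general $\mathcal{S}\in\Lambda$, the local contributions $\mathrm{length}(\mathcal{S}\cap\mathcal{C}_1)_{\{q\}}$ along $\mathcal{C}_1\cap\mathcal{C}_2$, at $p$, and at the two ordinary base points add up to $14$, leaving the single free preimage required by Lemma \ref{lem:bir}. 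This local bookkeeping---especially controlling the length at the non-reduced point $p$ and checking that genericity of $(\mathcal{C},L,u)$ makes the remaining intersections transverse---is where I expect the real difficulty to lie.

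Finally, any birational $\psi$ obtained this way has bidegree $(3,5)$, satisfies $\mathfrak{p}_a(\mathcal{C}_1)=1$, and has every member of $\Lambda_\psi$ singular at $p$; comparing these invariants with Hudson's Table VI places it in $\mathcal{E}_{14}$. Conversely the remark made \emph{by construction} shows that every such map has $\Lambda_\psi\in\mathrm{im}\,\kappa_1$, so $\overline{\mathcal{E}_{14}}=\overline{\mathrm{im}\,\kappa_1}$ and the general element of $\mathrm{im}\,\kappa_1$ is indeed $\Lambda_\psi$ for a birational $\psi\in\mathcal{E}_{14}$. If the direct length computation proves too delicate, an alternative I would keep in reserve is to exhibit one explicit representative of $\mathcal{E}_{14}$ inside $\mathrm{im}\,\kappa_1$ and then invoke the openness of the degree-one (birationality) locus over the irreducible base $\mathrm{im}\,\kappa_1$, which also secures non-emptiness.
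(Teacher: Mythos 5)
Your skeleton matches the paper's proof (irreducibility of $F_1$ reduces everything to a general triple; $\mathcal{C}_{2,L}$ has degree $4$ and arithmetic genus $0$ by liaison; two ordinary base points come from $u$; birationality is a count of base-point contributions), but you stop exactly where the proof actually happens, and you say so yourself. The paper discharges the count not by abstract local bookkeeping but by first determining the generic structure of the linked curve: for $(\mathcal{C},L,u)$ general in $F_1$, the curve $\mathcal{C}_{2,L}$ is singular at $p$ and lies on a \emph{smooth} quadric, hence is reducible, namely the union of a twisted cubic and a line of that quadric. From this, every member of $h_L^{-1}(u)$ is singular at $p$ (the double point of $\mathcal{C}_{2,L}$ at $p$ together with the vanishing of the residual pencil $u$ on $\pi^{-1}(p)$), the general residual curve $\mathcal{C}_1$ is a quintic singular at $p$, and the system restricted to it has degree $5$ with base divisor $\pi^{-1}(p)+p_1+p_2$ of degree $4$, leaving a degree-$1$ map onto a line — this is your ``$14$'', made concrete. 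Note that this structural step is not optional even for your last paragraph: the invariants you propose to read off (bidegree $(3,5)$, $\mathfrak{p}_a(\mathcal{C}_1)=1$, every cubic of $\Lambda_\psi$ singular at $p$) do \emph{not} single out $\mathcal{E}_{14}$ in Hudson's table — the explicit example given right after this lemma in the paper has precisely these invariants and belongs to $\mathcal{E}_{18}$. What places the general element in $\mathcal{E}_{14}$ is exactly the fact that $\mathcal{C}_2$ is a twisted cubic plus a line through an ordinary double point, which you never establish.

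Your reserve argument is moreover unsound: birationality is \emph{not} an open condition in an irreducible family of rational maps. The topological degree can only drop under specialization (preimage points may collide or fall into the base locus), so a family of non-birational maps can perfectly well contain a birational member; on $\mathbb{P}^1$ already, $(x:y)\dashrightarrow(x^2:xy+ty^2)$ has degree $2$ for $t\neq 0$ but degenerates to the identity at $t=0$. Hence exhibiting one birational representative inside $\mathrm{im}\,\kappa_1$ gives no control on the general element; to promote it you would need, say, flatness of the base schemes in the family so that the number of free intersection points in Lemma \ref{lem:bir} is constant — which is tantamount to the computation you postponed. (A smaller point in the same direction: before invoking Lemma \ref{lem:bir} you must know $\psi$ is dominant; absence of a fixed component of $\Lambda$ alone does not give this.) So the main route is the paper's route with its decisive step left undone, and the fallback would fail.
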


\begin{proof}
As $F_1$ is irreducible it is enough to show that $h_L^{-1}(u)$ is a birational system when $(\mathcal{C},L,u)$ is general in $F_1$. In that situation $\mathcal{C}_{2,L}$ is a curve of degree $4$, arithmetic genus $0$, singular at $p$, lying on a smooth quadric. Therefore $\mathcal{C}_{2,L}$ is reducible; more precisely it is the union of a twisted cubic and a line of this smooth quadric. All the elements of $h_L^{-1}(u)$ are cubic surfaces singular at $p$ because $\mathcal{C}_{2,L}$ has a double point at $p$, and the residual pencil $u\subset\mathrm{H}^0\big(\omega_{\mathcal{C}}(h)\big)$ vanishes at $p$ by definition of $F_1$. From definition of $u$, $h_L^{-1}(u)$ has two ordinary base points $p_1$ and $p_2$. Hence let $\mathcal{C}_1$ be the residual of $\mathcal{C}_{2,L}$ in the intersection of two general cubics of $h_L^{-1}(u)$. Then $\mathcal{C}_1$ is singular at $p$, $\psi_{\mathcal{C},L,u}$ has degree $5$ on $\mathcal{C}_1$, sends~$\mathcal{C}_1$ onto a line, is birational, and its base locus contains $p$, $p_1$, $p_2$.
\end{proof}

Let us remark that the previous irreducibility result asserts that the following example (belonging to family $\mathcal{E}_{18}$) that is not on a smooth quadric is nevertheless a deformation of elements of $\mathcal{E}_{14}$.

\begin{eg}
Let $\mathcal{C}_2$ be the union of a line doubled on a smooth quadric with two other lines, such that all these lines contain a same point $p$. Set 
\[
Q=z_0z_3-z_1z_2,\qquad \mathcal{I}_p=(z_0,z_1,z_2); 
\]
then $\mathcal{I}_{\mathcal{C}_2}=((z_2,z_0)^2 +(Q)) \cap (z_1,z_2) \cap (z_0-z_2,z_1-z_2)$. Now chose a double point of contact (note that the tangent cone must contain the tangent cone of $\mathcal{C}_2$): 
\[
\mathcal{I}_{\mathrm{dpc}}= (z_2^2z_3-z_0z_1z_3)+(z_0,z_1,z_2)^3, 
\]
and let $p_1$ and $p_2$ be two general points. Define $\mathcal{I}_{\psi}$ by $\mathcal{I}_{\mathcal{C}_2} \cap \mathcal{I}_{\mathrm{dpc}} \cap\mathcal{I}_{p_1} \cap \mathcal{I}_{p_2}$. So $I_{\psi}$ is the intersections of $I_{p_1}\cap I_{p_2}$ with
\begin{small}
\[
\hspace*{1cm}\mathcal{I}_{\mathcal{C}_2} \cap
\mathcal{I}_{\mathrm{dpc}} = \big(z_1z_2^2-z_2^3,\,z_0z_2^2-z_2^3,\,z_1^2z_2-z_2^3-z_0z_1z_3+z_2^2z_3,\,z_0z_1z_2-z_2^3,\,z_0^2z_2-z_2^3,\,z_0^2z_1-z_2^3\big).\]  
\end{small}
The tangent cone of $\mathcal{C}_2$ at $p$ has degree $4$ but the tangent cone of $\mathcal{C}_1\cup\mathcal{C}_2$ at $p$ has degree~$6$, so $\mathcal{C}_1$ belongs to $\mathcal{R}_{p,1}$.
\end{eg}

\item[$b_1)$] Suppose now that $\Lambda_\psi$ contains a smooth element at $p$. Then $p$ is a point of contact, all the cubic surfaces are tangent at $p$; $\mathcal{C}_2\subset Q$ is linked to a curve of degree $2$ and genus $-1$. In that case we have no restriction on the curves of genus $0$ and degree $4$ contrary to the previous case. Hence in general $Q$ is smooth, and $\mathcal{C}_2$ is a smooth rational curve on $Q$. Set $Q=z_0z_3-z_1z_2$, $\mathcal{I}_{\ell_1}=(z_0,z_1)$, and $\mathcal{I}_{\ell_2}=(z_2,z_3)$; one has 
\[
\mathcal{J}=\mathcal{I}_{\ell_1\cup\ell_2}=(z_0z_2,z_0z_3,z_1z_2,z_1z_3).
\]
Let $\mathcal{S}_0$ be the element of $\mathcal{J}$ given by 
\[
az_0z_2+bz_0z_3+cz_1z_3\qquad a,\, b,\, c\in\mathbb{C}[z_0,z_1,z_2,z_3]_1;
\]
one has $\mathcal{I}_{\mathcal{C}_2}=((\mathcal{S}_0,Q):\mathcal{J})=(Q,\mathcal{S}_0,\mathcal{S}_1,\mathcal{S}_2)$ with
\[
\mathcal{S}_1=z_0^2a+z_0z_1b+z_1^2c, \quad \mathcal{S}_2=z_2^2a+z_2z_3b+z_3^2c.
\]
The dimension of $\mathrm{H}^0\big(\mathcal{I}_{\mathcal{C}_2}(3h)\big)$ is $7$; indeed one has the following seven cubics: 
\[
\mathcal{I}_{\mathcal{C}_2}=\langle Qz_0,\, Qz_1,\, Qz_2,\, Qz_3, \,\mathcal{S}_0,\, \mathcal{S}_1,\, \mathcal{S}_2\rangle. 
\]
The map $\psi$ has no base point. Indeed $u=\frac{\Lambda_\psi}{\mathrm{H}^0\big(\mathcal{I}_{\mathcal{C}_1\cup\mathcal{C}_2}(3)\big)}$ is contained in the sections of~$\mathcal{O}_{\mathbb{P}^1_\mathbb{C}}(5)$ whose base locus contains $2\pi^{-1}(p)$; we thus already have an isomorphism between $\mathbb{P}^1_\mathbb{C}$ and $\vert u^\vee\vert$. The map~$\psi$ belongs to $\overline{\mathcal{E}_{23}}$. 
\end{enumerate}

$\bullet$ Suppose that $\mathfrak{p}_a(\mathcal{C}_1)=2$. Then $\mathfrak{p}_a(\mathcal{C}_2)=1$, $\mathcal{C}_1$ lies on a quadric and $\mathrm{h}^0\mathcal{I}_{\mathcal{C}_1}(3)=6$. We will still denote by $\pi\colon\mathbb{P}^1_\mathbb{C}\to\mathcal{C}_1$ the normalization of $\mathcal{C}_1$.

\begin{enumerate}
\item[$a_2)$] Assume first that $\mathcal{C}_1$ has a triple point $p$. The curve $\mathcal{C}_1$ is linked to a line by a complete intersection $(Q,\mathcal{S}_0)$ where $Q$ (resp. $\mathcal{S}_0$) is a cone (resp. a cubic) singular at $p$. We can write the normalization $\pi$ as follows $(\alpha^2A,\alpha\beta A,\beta^2A,B)$ with $A\in\mathbb{C}[\alpha,\beta]_3$, $B\in\mathbb{C}[\alpha,\beta]_5$, and $A$, $B$ without common factors. Then $Q=z_1^2-z_0z_2$, and $\mathrm{H}^0\big(\omega_{\mathcal{C}_1}(h)\big)$ can be identified with $\mathrm{H}^0\big(\mathcal{I}_\ell(2)\big)$, where $\mathcal{I}_\ell=(z_0,z_1)$. So $\mathrm{H}^0\big(\omega_{\mathcal{C}_1}(h)\big)$ is the $6$-dimensional subspace~$W$ of $\mathrm{H}^0\big(\mathcal{O}_{\mathbb{P}^1_\mathbb{C}}(6)\big)$ spanned by $(\alpha,\beta)\cdot(\alpha^2A,\alpha\beta A,\beta^2A,B)$. Let us consider the subspace $V_A=W\cap(A)$ of $W$. Let $L$ be the $2$-dimensional vector space $\Lambda_\psi\cap\mathrm{H}^0\big(\mathcal{I}_{\mathcal{C}_1}(3h)\big)$. Then $\frac{\Lambda_\psi}{L}$ gives a $2$-dimensional vector subspace $u$ of $V_A$. The restriction of~$\psi$ to $\mathcal{C}_1$ gives a birational map $\mathbb{P}^1_\mathbb{C}\dashrightarrow\vert u^\vee\vert$ induced by $u\subset V_A\subset\mathrm{H}^0\big(\mathcal{O}_{\mathbb{P}^1_\mathbb{C}}(6)\big)$. Furthermore~$\psi$ has two ordinary base points. We would like to show that in that case $\mathcal{C}_2$ moves in an irreducible family whose general element is the complete intersection of two quadrics. We thus fix a point $p\in\mathbb{P}^3_\mathbb{C}$ and introduce the irreducible set 
\[
\mathcal{R}_{p,2}=\big\{\mathcal{C}\in\mathscr{C}\,\vert\, \{p\}=\mathrm{Sing}\,\mathcal{C},\,\mathfrak{p}_a(\mathcal{C})=2\big\}.
\]
We define the set $F_2$ as the $(\mathcal{C},L,u)\in\mathcal{R}_{p,2}\times\mathrm{H}^0\big(\mathcal{I}_{\mathcal{C}}(3h)\big)\times V_A$ given by
\begin{itemize}
\item $L\subset\mathrm{H}^0\big(\mathcal{I}_{\mathcal{C}}(3h)\big)$ of dimension $2$ such that the residual of $\mathcal{C}$ in the complete intersection defined by $L$ has no common component with $\mathcal{C}$, and $\mathcal{C}$ is geometrically linked to a curve denoted by $\mathcal{C}_{2,L}$,
\smallskip
\item $u\subset V_A$ of dimension $2$ such that $\mathbb{P}^1_\mathbb{C}\dashrightarrow \vert u^{\vee}\vert$ is birational and whose base locus contains $\pi^{-1}(p)$.
\end{itemize}

Let us consider the map
\[
\kappa_2\colon F_2\to \mathbb{G}\big(4;\mathrm{H}^0\big(\mathcal{O}_{\mathbb{P}^3_\mathbb{C}}(3)\big)\big),\qquad(\mathcal{C},L,u) \mapsto h_L^{-1}(u).
\]

If $\psi$ is birational, if $\mathfrak{p}_a(\mathcal{C}_1)=2$, and $\mathcal{C}_1$ has a triple point then $\psi$ belongs to $\mathrm{im}\,\kappa_2$.

\begin{lem}
The general element of $\mathrm{im}\,\kappa_2$ coincides with $\Lambda_\psi$ for some birational map $\psi$ of $\mathcal{E}_{13}$.
\end{lem}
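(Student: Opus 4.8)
The plan is to mimic the argument just given for $\kappa_1$. Since $F_2$ is irreducible---the choice of $\mathcal{C}\in\mathcal{R}_{p,2}$ is irreducible, and for fixed $\mathcal{C}$ those of $L$ and of $u$ are irreducible as well---the set $\mathrm{im}\,\kappa_2$ is irreducible, so it is enough to show that $h_L^{-1}(u)$ is the linear system $\Lambda_\psi$ of a birational map $\psi$ of $\mathcal{E}_{13}$ for a \emph{general} triple $(\mathcal{C},L,u)\in F_2$. I fix such a triple and write $\mathcal{C}=\mathcal{C}_1$, $\mathcal{C}_2=\mathcal{C}_{2,L}$.

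First I would identify $\mathcal{C}_2$. By construction it is linked to $\mathcal{C}_1$ in the complete intersection of the pencil $L$ of cubics, whence $\deg\mathcal{C}_2=9-5=4$, and Corollary~\ref{cor:formule} gives $\mathfrak{p}_a(\mathcal{C}_2)=\mathfrak{p}_a(\mathcal{C}_1)-1=1$. From the liaison sequence~(\ref{eq:blabla}) of Lemma~\ref{Lem:liaison} one has $\mathrm{H}^0(\mathcal{I}_{\mathcal{C}_2}(2h))=\mathrm{H}^0(\omega_{\mathcal{C}_1})$, hence $\mathrm{h}^0\mathcal{I}_{\mathcal{C}_2}(2h)=\mathfrak{p}_a(\mathcal{C}_1)=2$, so $\mathcal{C}_2$ lies on a pencil of quadrics. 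For general data this pencil has no fixed component and $\mathcal{C}_2$ is a smooth elliptic quartic, the complete intersection of two quadrics; this is the feature that singles out $\mathcal{E}_{13}$ among the families with $\mathfrak{p}_2=1$.

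Next I would check that $h_L^{-1}(u)=\Lambda_\psi$ for a genuine birational map. As $h_L\colon\mathrm{H}^0(\mathcal{I}_{\mathcal{C}_2}(3h))\to\mathrm{H}^0(\omega_{\mathcal{C}_1}(h))$ is onto with kernel $L$ of dimension $2$, and $u\subset V_A$ has dimension $2$, the system $h_L^{-1}(u)$ is $4$-dimensional. Two general members cut out $\mathcal{C}_1\cup\mathcal{C}_2$, a curve of multiplicity $3+1=4$ at $p$: indeed $\mathcal{C}_1$ has a triple point at $p$ (the vertex of the cone $Q$, whose three branch tangents are non-coplanar since no three points of a plane conic are collinear) while $\mathcal{C}_2$ passes simply through $p$. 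As two general members of $\Lambda_\psi$ have equal multiplicity at $p$, this forces each of them to acquire a double point at $p$, which is the local structure recorded by $\mathcal{E}_{13}$. The requirement $u\subset V_A$ means, after dividing out $A$, that the residual pencil on $\mathbb{P}^1_\mathbb{C}$ has two further base points, which produce two ordinary base points $p_1$, $p_2$ of $\psi$; and the requirement that $\mathbb{P}^1_\mathbb{C}\dashrightarrow\vert u^\vee\vert$ be birational says precisely that $\psi$ restricts to a degree-one map on $\mathcal{C}_1$. Consequently $\mathcal{C}_1$ is the strict transform of a general line, $\psi$ has bidegree $(3,5)$, and its base locus is the elliptic quartic $\mathcal{C}_2$ together with the double point $p$ and the two ordinary points $p_1$, $p_2$; comparing these invariants with Table~VI identifies $\psi$ with a general element of $\mathcal{E}_{13}$.

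The birationality is the step to handle with care: I would confirm it through the count of Lemma~\ref{lem:bir}, verifying that the contributions of $\mathcal{C}_1\cap\mathcal{C}_2$ and of the isolated base points leave exactly one residual preimage of a general point. The main obstacle, however, is the genericity claim of the second paragraph --- that for general $(\mathcal{C},L,u)$ the linked curve $\mathcal{C}_2$ really is a smooth complete intersection of two quadrics, rather than a degenerate elliptic quartic sitting in a larger system of quadrics --- together with the verification that the double point at $p$ produced this way is exactly the one demanded by $\mathcal{E}_{13}$ and not by a neighbouring family with $\mathfrak{p}_2=1$.
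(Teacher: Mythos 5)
Your proposal follows the paper's proof in all essentials: the authors likewise use the irreducibility of $F_2$ to reduce to a general triple $(\mathcal{C},L,u)$, state that $\mathcal{C}_{2,L}$ is then a curve of degree $4$ and genus $1$ which is the complete intersection of two smooth quadrics, obtain the two ordinary base points $p_1$, $p_2$ from $u$, and conclude membership in $\mathcal{E}_{13}$, recording the system explicitly as $\Lambda_\psi=\mathrm{H}^0\big((\mathcal{I}_{\mathcal{C}_{2,L}}\cdot\mathcal{I}_p\cap\mathcal{I}_{p_1}\cap\mathcal{I}_{p_2})(3)\big)$. Your liaison computations ($\deg\mathcal{C}_{2,L}=4$, $\mathfrak{p}_a(\mathcal{C}_{2,L})=1$ via Corollary \ref{cor:formule}, $\mathrm{h}^0\mathcal{I}_{\mathcal{C}_{2,L}}(2h)=\mathrm{h}^0\omega_{\mathcal{C}}=2$) are correct and merely make explicit what the paper leaves implicit.

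One intermediate step of yours is, however, wrong as stated: two \emph{general} members of $\Lambda_\psi$ do not cut out $\mathcal{C}_1\cup\mathcal{C}_2$ --- only the pencil $L$ contains $\mathcal{C}_1$, and a general pencil of $\Lambda_\psi$ links $\mathcal{C}_{2,L}$ to a \emph{different} quintic, so your multiplicity count $3+1=4$ at $p$ does not apply to general members. The singularity of every member at $p$ should instead be derived as in the $\kappa_1$ lemma: since $u\subset V_A$, the residual section of any member of $h_L^{-1}(u)$ vanishes on $\pi^{-1}(p)$, so a member smooth at $p$ would have its tangent plane containing the three non-coplanar branch tangents of $\mathcal{C}$ at the vertex $p$ of the cone, which is impossible; this is exactly what the product ideal $\mathcal{I}_{\mathcal{C}_{2,L}}\cdot\mathcal{I}_p$ in the paper's formula encodes (your non-coplanarity remark is correct and is what makes this work). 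As for the two points you flag as obstacles, neither is a genuine blocker, and the paper asserts both at the same level of detail as you do: birationality is built into $F_2$ through the requirement that $\mathbb{P}^1_\mathbb{C}\dashrightarrow\vert u^\vee\vert$ be birational, since the preimage of a general point of the line $\psi(\mathcal{C})$ lies on $\mathcal{C}\cup\mathcal{C}_{2,L}$ and hence meets the complement of the base locus only along $\mathcal{C}$, which is the count of Lemma \ref{lem:bir}; and the genericity of $\mathcal{C}_{2,L}$ (a smooth complete intersection of two quadrics) follows, by openness of that condition over the irreducible $F_2$, from exhibiting a single triple with this property --- for instance by reversing the construction, starting from a smooth elliptic quartic and a point $p$ on it and linking by two general cubics of $\mathcal{I}_{\mathcal{C}_2}\cdot\mathcal{I}_p$.
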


\begin{proof}
As $F_2$ is irreducible one can consider a general element of $F_2$, and then $\mathcal{C}_{2,L}$ is a curve of degree $4$, genus $1$ and is the complete intersection of two smooth quadrics. The map $\psi$ has two ordinary base points $p_1$, $p_2$, and belongs to $\mathcal{E}_{13}$. More precisely $\Lambda_\psi=\mathrm{H}^0\big((\mathcal{I}_{\mathcal{C}_{2,L}}\cdot\mathcal{I}_p\cap \mathcal{I}_{p_1}\cap\mathcal{I}_{p_2})(3)\big)$.
\end{proof}

Note that this irreducibility result asserts that the following example, where $\mathcal{C}_2$ is not a complete intersection of two quadrics is nevertheless a deformation of elements of $\mathcal{E}_{13}$.

\begin{eg}
Let $\mathcal{C}_2$ be the union of a plane cubic $\mathcal{C}_3$ singular at $p$ and a line $\ell$ containing~$p$ but not in the plane spanned by $\mathcal{C}_3$. For instance take $\mathcal{I}_p=(z_0,z_1,z_2)$, $\mathcal{I}_\ell=(z_1,z_2)$, $\mathcal{I}_{\mathcal{C}_3}=(z_1-z_0,(z_1-z_2)z_1z_3+z_0^3+z_1^3+z_2^3)$. Let $\mathcal{I}_{\textrm{dpc}}$ be a double point of contact at $p$. (As we have already chose $\mathcal{C}_2$, we must take a quadric cone containing the tangent cone to $\mathcal{C}_2$). For instance one can take: $I_{\textrm{dpc}}=(z_1^2-z_0z_2)+\mathcal{I}_p^3$, and let
\begin{eqnarray*}
\mathcal{J}=\mathcal{I}_{\mathcal{C}_2}\cap \mathcal{I}_{\textrm{dpc}}&=&\big(z_0z_2^2-z_1z_2^2,z_0z_1z_2-z_1^2z_2,\,z_0^2 z_2-z_1^2z_2,\,2 z_1^3+z_2^3+z_1^2z_3-z_0z_2z_3,\\
& &\hspace{0.5cm}2 z_0z_1^2+z_2^3+z_1^2z_3-z_0z_2z_3,\,2z_0^2z_1+z_2^3+z_1^2z_3-z_0z_2z_3\big)  
\end{eqnarray*}
chose two general points $p_1$ and $p_2$ and define by $\mathcal{I}_\psi$ the ideal generated by the $4$ cubics of $\mathcal{J}\cap \mathcal{I}_{p_1} \cap \mathcal{I}_{p_2}$. The tangent cone of $\mathcal{C}_2$ at $p$ has degree $3$, the tangent cone of $\mathcal{C}_1\cup\mathcal{C}_2$ at $p$ has degree $6$ (because $p$ is a double point of contact); hence $\mathcal{C}_1$ has also a triple point at $p$, and belongs to $\mathcal{R}_{p,2}$.
\end{eg}

\smallskip

\item[$b_2)$] Suppose now that $\mathcal{C}_1$ hasn't a triple point; $\mathcal{C}_1$ has thus two distinct double points. Fix two distinct points $p$ and $q$ in $\mathbb{P}^3_\mathbb{C}$, and set 
\[
\mathcal{R}_{p,q,2}=\big\{\mathcal{C}\in\mathscr{C}\,\vert\, \{p,\, q\}= \mathrm{Sing}\,\mathcal{C},\, \mathfrak{p}_a(\mathcal{C})=2\big\}.
\]
Let $V_3$ (resp. $V_4$) be the sections of $\mathcal{O}_{\mathbb{P}^1_\mathbb{C}}(7)$ whose base locus contains $\pi^{-1}(p)$ and $\pi^{-1}(q)$ (resp. $\pi^{-1}(p)$ and $2\pi^{-1}(q)$). The set $\mathcal{R}_{p,q,2}$ is irreducible. Remark that for all $\mathcal{C}$ in $\mathcal{R}_{p,q,2}$ one has 
\[
\mathrm{h}^0\mathcal{I}_{\mathcal{C}}(3h)=6,\qquad\mathrm{h}^0\big((\mathcal{I}_{\mathcal{C}}\cap\mathcal{I}_p^2)(3h)\big)=5,\qquad\mathrm{h}^0\big((\mathcal{I}_{\mathcal{C}}\cap\mathcal{I}_p^2\cap\mathcal{I}_q^2)(3h)\big)=4. 
\]

\begin{rem}
One cannot have two distinct points of contact. Assume by contradiction that there are two distinct points of contact $p$ and $q$. Denote by $\pi\colon\widetilde{\mathcal{C}_1}\to\mathcal{C}_1$ the normalization of $\mathcal{C}_1$. One would have $\pi^*\omega_{\mathcal{C}_1}(h)=\mathcal{O}_{\mathbb{P}^1_\mathbb{C}}(7)$ but the linear system induced by $\psi$ would contain in the base locus $2\pi^{-1}(p)+2\pi^{-1}(q)$ which is of length $8$: contradiction with the fact that $\psi(\mathcal{C}_1)$ is a line. 
\end{rem}

So one has the following alternative:
\begin{enumerate}
\item[$b_2)$\,i)] Either all the cubics of $\Lambda_\psi$ are singular at $p$ and $q$. One can then define the set $F_3$ of $(\mathcal{C},L,u)\in\mathcal{R}_{p,q,2}\times\mathrm{H}^0\big((\mathcal{I}_{\mathcal{C}}\cap\mathcal{I}_p^2\cap\mathcal{I}_q^2)(3h)\big)\times V_3$ given by 
\begin{itemize}
\item $L\subset\mathrm{H}^0\big((\mathcal{I}_{\mathcal{C}}\cap\mathcal{I}_p^2\cap\mathcal{I}_q^2)(3h)\big)$ of dimension $2$ such that the residual of $\mathcal{C}$ in the complete intersection defined by $L$ has no common component with $\mathcal{C}$;
\smallskip
\item $u\subset V_3$ of dimension $2$ such that $\mathcal{C}\dashrightarrow\vert u^{\vee}\vert$ has degree $1$.
\end{itemize}

Let us consider the map
\[
\kappa_3\colon F_3\to \mathbb{G}\big(4;\mathrm{H}^0\big(\mathcal{O}_{\mathbb{P}^3_\mathbb{C}}(3)\big)\big),\qquad(\mathcal{C},L,u) \mapsto h_L^{-1}(u).
\]

If $\psi$ is birational, if $\mathfrak{p}_a(\mathcal{C}_1)=2$, if $\mathcal{C}_1$ has two distinct double points at $p$ and $q$ and if all the cubics of $\Lambda_\psi$ are singular at $p$ and $q$, then $\Lambda_\psi$ belongs to $\mathrm{im}\,\kappa_3$.

\begin{lem}
The general element of $\mathrm{im}\,\kappa_3$ coincides with $\Lambda_\psi$ for some birational map~$\psi$ of $\mathcal{E}_{19}$.
\end{lem}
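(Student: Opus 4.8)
The plan is to follow verbatim the strategy used for the lemmas on $\kappa_1$ and $\kappa_2$. Since $F_3$ is irreducible, it suffices to show that $h_L^{-1}(u)$ is a birational linear system for a general triple $(\mathcal{C},L,u)\in F_3$; the assertion for the general element of $\mathrm{im}\,\kappa_3$ then follows, and persists on the closure. So I would fix a general $(\mathcal{C},L,u)$, for which $\mathcal{C}$ is an irreducible rational quintic with ordinary nodes at $p$ and $q$ and $\mathfrak{p}_a(\mathcal{C})=2$, and analyse the curve $\mathcal{C}_{2,L}$ linked to $\mathcal{C}$ in the complete intersection of the two cubics spanning $L$.

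First I would compute the invariants of $\mathcal{C}_{2,L}$. Liaison gives $\deg\mathcal{C}_{2,L}=9-5=4$, and by Corollary \ref{cor:formule} one has $\deg\mathcal{C}_{2,L}-\deg\mathcal{C}=\mathfrak{p}_a(\mathcal{C}_{2,L})-\mathfrak{p}_a(\mathcal{C})$, whence $\mathfrak{p}_a(\mathcal{C}_{2,L})=1$. Since both cubics of $L$ are singular at $p$ and at $q$ while $\mathcal{C}$ accounts for only two of the branches of the complete intersection at each of these points, the residual curve $\mathcal{C}_{2,L}$ is again singular at $p$ and $q$. An irreducible curve of degree $4$ and arithmetic genus $1$ cannot carry two double points, so $\mathcal{C}_{2,L}$ is reducible; the generic configuration compatible with these constraints is the union of two conics meeting exactly at $p$ and $q$ (I would confirm this is the generic type and rule out the degenerate alternatives).

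Next I would read off the linear system $\Lambda_\psi=h_L^{-1}(u)$. All its members are cubics singular at $p$ and $q$: indeed $\mathcal{C}_{2,L}$ has a double point at each of $p,q$, and the residual pencil $u\subset\mathrm{H}^0\big(\omega_{\mathcal{C}}(h)\big)$ vanishes there by the defining inclusion $u\subset V_3$. The dimension count $\dim h_L^{-1}(u)=\dim L+\dim u=2+2=4$ matches $\mathrm{h}^0\big((\mathcal{I}_{\mathcal{C}}\cap\mathcal{I}_p^2\cap\mathcal{I}_q^2)(3h)\big)=4$, and the degree-one requirement on $\mathbb{P}^1_\mathbb{C}\dashrightarrow\vert u^\vee\vert$ forces, beyond the base locus $\pi^{-1}(p)+\pi^{-1}(q)$ of length $4$, exactly two further simple base points, the two ordinary base points $p_1,p_2$ of $\psi$. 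Taking $\mathcal{C}_1$ to be the residual of $\mathcal{C}_{2,L}$ in two general cubics of $\Lambda_\psi$, one recovers a degree $5$, arithmetic genus $2$ curve singular at $p$ and $q$ on which $\psi$ restricts to a degree-one map onto a line, so that $\Lambda_\psi=\mathrm{H}^0\big((\mathcal{I}_{\mathcal{C}_{2,L}}\cap\mathcal{I}_p^2\cap\mathcal{I}_q^2\cap\mathcal{I}_{p_1}\cap\mathcal{I}_{p_2})(3)\big)$ defines an element of $\mathcal{E}_{19}$.

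The main obstacle is the birationality check itself, i.e. verifying the numerical criterion of Lemma \ref{lem:bir}: one must show that $3\deg\mathcal{C}_1=15$ minus the intersection lengths $\mathrm{length}(\mathcal{S}\cap\mathcal{C}_1)$ accumulated at the points of $\mathcal{C}_1\cap\mathcal{C}_2$ and at the isolated base points $p_1,p_2$ leaves precisely a single free preimage. This reduces to a local computation of $\mathrm{length}(\mathcal{S}\cap\mathcal{C}_1)$ at the two nodes $p,q$ for a general $\mathcal{S}\in\Lambda_\psi$, which depends on the singularity type of the general cubic there (to be matched against the configurations of \texttt{Table} $1$ of \S\ref{Sec:singcubsurf}); the reducibility of $\mathcal{C}_{2,L}$ into two conics through $p$ and $q$ is exactly what makes these local lengths computable, while the irreducibility of $F_3$ guarantees that the generic member is genuinely birational and not merely dominant.
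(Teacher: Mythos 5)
Your overall strategy matches the paper's: exploit the irreducibility of $F_3$ to reduce to a general triple $(\mathcal{C},L,u)$, compute the invariants of the linked curve ($\deg\mathcal{C}_{2,L}=4$, $\mathfrak{p}_a(\mathcal{C}_{2,L})=1$, singular at $p$ and $q$, hence reducible), and then identify the linear system. But your identification of the generic configuration is wrong, and provably so. You claim $\mathcal{C}_{2,L}$ is generically the union of two conics meeting exactly at $p$ and $q$; the paper (in agreement with \textsc{Hudson}'s row $19$, whose $F$-curves are $\omega_3\equiv O_1O_2$ rational and $l\equiv O_1O_2$) shows it is the union of a twisted cubic $\Gamma$ through $p,q$ and the line $\ell=(pq)$, lying on a smooth quadric. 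The two-conic configuration cannot occur in $F_3$: if $\mathcal{C}_{2,L}=C'_1\cup C'_2$ with $C'_i\subset H_i$ and $C'_1\cap C'_2=\{p,q\}$, then $(pq)=H_1\cap H_2$, the quadrics through $\mathcal{C}_{2,L}$ are spanned by $H_1H_2$ and one further quadric $Q_2$ smooth at $p,q$, and any cubic $H_1H_2A+Q_2B$ that is singular at both $p$ and $q$ forces $B(p)=B(q)=0$, so $B$ is a plane containing the line $(pq)$; hence every such cubic vanishes on $(pq)$. The line $(pq)$ would then lie in the complete intersection defined by $L$, hence in $\mathcal{C}\cup\mathcal{C}_{2,L}$, hence in the quintic $\mathcal{C}$ — contradicting the irreducibility of $\mathcal{C}$ built into $\mathscr{C}$. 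So the configuration you propose to "confirm as the generic type" is in fact impossible, and your closing formula $\Lambda_\psi=\mathrm{H}^0\big((\mathcal{I}_{\mathcal{C}_{2,L}}\cap\mathcal{I}_p^2\cap\mathcal{I}_q^2\cap\mathcal{I}_{p_1}\cap\mathcal{I}_{p_2})(3)\big)$ is built on it.

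This is not a cosmetic slip: the correct structure $\mathcal{C}_{2,L}=\Gamma\cup\ell$ yields the identification $h_L^{-1}(u)=\mathrm{H}^0\big((\mathcal{I}_\Gamma\cdot\mathcal{I}_\ell\cap\mathcal{I}_{p_1}\cap\mathcal{I}_{p_2})(3)\big)$, which is exactly what the paper uses afterwards to prove $\mathcal{E}_{19}\subset\overline{\mathcal{E}_{12}}$ in Theorem \ref{thm:comp35}; with two conics that argument would collapse. Separately, you defer the actual birationality of the general member to a local-length computation via Lemma \ref{lem:bir} and \texttt{Table} $1$ that you do not carry out, whereas in the paper's setup no such computation is needed: the degree-one condition on $\mathbb{P}^1_\mathbb{C}\dashrightarrow\vert u^\vee\vert$ is part of the definition of $F_3$, and once one general birational member is exhibited, irreducibility of $F_3$ propagates the conclusion. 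As written, your argument both asserts an incorrect generic linked curve and leaves the decisive verification as an acknowledged gap.
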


\begin{proof} 
As $F_3$ is irreducible one can consider a general element $(\mathcal{C},L,u)$ of $F_3$ and then $\mathcal{C}_{2,L}$ is a curve of degree $4$ and genus $1$, is singular at $p$ and $q$, lies on a smooth quadric, and is reducible: $\mathcal{C}_{2,L}$ is the union of a twisted cubic $\Gamma$ and the line $\ell=(pq)$. Moreover all the elements of $h_L^{-1}(u)$ are singular at $p$ and $q$ (by definition of $V_3$ and by the fact that $\mathcal{C}_{2,L}$ is singular at $p$ and $q$).
\end{proof}

In this situation as all the cubic surfaces are singular at $p$ and $q$, 
\[
h_L^{-1}(u)=\mathrm{H}^0\big((\mathcal{I}_\Gamma\cdot\mathcal{I}_\ell\cap\mathcal{I}_{p_1}\cap\mathcal{I}_{p_2})(3)\big)
\]
where $p_1$, $p_2$ are two ordinary base points; $\psi$ belongs to $\mathcal{E}_{19}$. 

\item[$b_2)$\,ii)] Or one of the cubic of $\Lambda_\psi$ is smooth at (for instance) $q$.
Let us introduce the set $F_4$ of pairs $(\mathcal{C},L)\in\mathcal{R}_{p,q,2}\times\mathrm{H}^0\big((\mathcal{I}_{\mathcal{C}}\cap\mathcal{I}_p^2)(3h)\big)$ satisfying: $L\subset\mathrm{H}^0\big((\mathcal{I}_{\mathcal{C}}\cap\mathcal{I}_p^2)(3h)\big)$ of dimension $2$ such that the residual of $\mathcal{C}$ in the complete intersection defined by $L$ has no common component with $\mathcal{C}$.

Let us consider the map
\[
\kappa_4\colon F_4\to \mathbb{G}\big(4;\mathrm{H}^0\big(\mathcal{O}_{\mathbb{P}^3_\mathbb{C}}(3)\big)\big),\qquad(\mathcal{C},L) \mapsto h_L^{-1}(V_4);
\]
note that $\dim V_4=2$.

If $\psi$ is birational, if $\mathfrak{p}_a(\mathcal{C}_1)=2$, $\mathcal{C}_1$ hasn't a triple point and one of the cubic of $\Lambda_\psi$ is smooth at (for instance) $q$, then $\Lambda_\psi$ belongs to $\mathrm{im}\,\kappa_4$.

\begin{lem}
The general element of $\mathrm{im}\,\kappa_4$ coincides with $\Lambda_\psi$ for some birational map~$\psi$ of $\mathcal{E}_{24}$.
\end{lem}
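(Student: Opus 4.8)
The plan is to mirror the proofs of the three preceding lemmas (for $\kappa_1$, $\kappa_2$, $\kappa_3$): since $F_4$ is irreducible — being fibered over the irreducible set $\mathcal{R}_{p,q,2}$ with an irreducible choice of $L$ — it suffices to analyze a general pair $(\mathcal{C},L)\in F_4$ and show that $h_L^{-1}(V_4)$ is then the linear system of a birational map with the singularity configuration of $\mathcal{E}_{24}$. First I would identify $\mathcal{C}_{2,L}$. As $\mathcal{C}=\mathcal{C}_1$ has degree $5$ and $\mathfrak{p}_a(\mathcal{C}_1)=2$, liaison in the $(3,3)$ complete intersection cut out by $L$ gives, by Corollary \ref{cor:formule}, a residual curve $\mathcal{C}_{2,L}$ of degree $4$ and arithmetic genus $1$. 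For $(\mathcal{C},L)$ general I expect $\mathcal{C}_{2,L}$ to carry a node at $p$ (inherited from the condition $L\subset\mathrm{H}^0((\mathcal{I}_{\mathcal{C}}\cap\mathcal{I}_p^2)(3h))$) and to pass smoothly through $q$, lying on a pencil of quadrics. Its precise type — an irreducible rational quartic, or a reducible configuration meeting $\mathcal{C}_1$ appropriately — must be pinned down, in analogy with the $\kappa_3$ case where $\mathcal{C}_{2,L}$ split as a twisted cubic together with the line $(pq)$; the key difference here is that $\mathcal{C}_{2,L}$ is singular only at $p$.

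Next I would verify the two singularity conditions defining $\mathcal{E}_{24}$. As in the lemma for $\kappa_1$, every cubic of $h_L^{-1}(V_4)$ is singular at $p$: it contains $\mathcal{C}_{2,L}$, which has a double point at $p$, and its image in $\mathrm{H}^0(\omega_{\mathcal{C}}(h))$ lies in $V_4$, hence vanishes along $\pi^{-1}(p)$; combining these through the liaison sequence \eqref{eq:blabla2} forces a double point at $p$. The genuine novelty compared with $\kappa_3$ concerns $q$: here $V_4$ consists of the sections of $\mathcal{O}_{\mathbb{P}^1_\mathbb{C}}(7)=\pi^*\omega_{\mathcal{C}}(h)$ whose base locus contains $\pi^{-1}(p)$ and $2\pi^{-1}(q)$, so the order-two vanishing along $\pi^{-1}(q)$ should translate into all cubics of the system being tangent at $q$ — that is, $q$ is a point of contact — while the general member stays smooth there. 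This is exactly the situation permitted (for a single point) by the Remark ruling out two distinct points of contact, and it is what separates $\mathcal{E}_{24}$ from $\mathcal{E}_{19}$, where $q$ was instead a second double point.

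Finally I would establish birationality and extract $\psi$. The quotient $u=\Lambda/L\cong V_4\subset\mathrm{H}^0(\omega_{\mathcal{C}}(h))$ governs the restriction of the putative map to $\mathcal{C}_1$. Identifying $\mathrm{H}^0(\omega_{\mathcal{C}}(h))$ with $\mathrm{H}^0(\mathcal{O}_{\mathbb{P}^1_\mathbb{C}}(7))$, the base divisor $\pi^{-1}(p)+2\pi^{-1}(q)$ has degree $6$ (the two nodes being generic), so the moving part has degree $7-6=1$ and $V_4$ induces an isomorphism $\mathbb{P}^1_\mathbb{C}\to\vert u^\vee\vert$; hence $\psi|_{\mathcal{C}_1}$ has degree $1$ onto a line. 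Taking $\mathcal{C}_1$ to be the residual of $\mathcal{C}_{2,L}$ in two general cubics of $h_L^{-1}(V_4)$ and locating the two remaining ordinary base points $p_1,p_2$, I would then apply Lemma \ref{lem:bir} to confirm that $3\deg\mathcal{C}_1$ minus the length contributions equals $1$, i.e. that $\psi$ is birational of bidegree $(3,5)$, with $\Lambda_\psi=h_L^{-1}(V_4)$ described ideal-theoretically by a double point at $p$, a point of contact at $q$, and the ordinary base points $p_1,p_2$. Matching this configuration to \textsc{Hudson}'s Table~VI places $\psi$ in $\mathcal{E}_{24}$.

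The hard part will be controlling the local contribution at the point of contact $q$: one must check that the order-two vanishing along $2\pi^{-1}(q)$ produces exactly a tangency and not an unexpected double point or a higher base-point multiplicity, so that the length bookkeeping in Lemma \ref{lem:bir} yields $1$ and the generic member genuinely lands in $\mathcal{E}_{24}$ rather than in a more degenerate stratum. A careful local computation in suitable coordinates, together with the genericity of $(\mathcal{C},L)$ furnished by the irreducibility of $F_4$, should settle this.
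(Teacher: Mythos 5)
Your overall strategy is the paper's: exploit the irreducibility of $F_4$, take a general pair $(\mathcal{C},L)$, identify the linked curve $\mathcal{C}_{2,L}$ (degree $4$, arithmetic genus $1$, singular at $p$ — the paper pins it down further as a complete intersection of two quadrics, which you leave open), and read off the singularity configuration at $p$ and the contact at $q$ from $V_4$. Up to that point the proposal is sound.

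But the final step contains a genuine error: you introduce ``two remaining ordinary base points $p_1,p_2$'' and claim the resulting configuration matches $\mathcal{E}_{24}$. It does not: in Hudson's Table~VI the row for $\mathcal{E}_{24}$ records one double point, one point of contact, and \emph{no} ordinary base points, and the paper's proof says exactly this (``The map $\psi$ has no base point and belongs to $\mathcal{E}_{24}$''). Your own bookkeeping already shows why no ordinary base points can exist here. In the cases $\kappa_1$, $\kappa_2$, $\kappa_3$ the datum included a $2$-dimensional subspace $u$ chosen inside a larger space $V_i$, and it was precisely that choice which forced the extra common zeros $p_1,p_2$ of the residual pencil. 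For $\kappa_4$ there is no such choice: $F_4$ consists of pairs $(\mathcal{C},L)$ only, because $\dim V_4=2$, so $u=\Lambda/L$ is forced to be all of $V_4$. You computed correctly that the base divisor $\pi^{-1}(p)+2\pi^{-1}(q)$ of $V_4$ has degree $6$ inside $\mathcal{O}_{\mathbb{P}^1_\mathbb{C}}(7)$, leaving a moving part of degree exactly $1$; the pencil is therefore already basepoint-free away from $p$ and $q$ and induces an isomorphism onto the line, with nothing left over. Adjoining two further ordinary base points would add $2$ to the fixed part of a degree-$7$ system whose fixed part already has degree $6$ — the count of Lemma~\ref{lem:bir} could then never return $1$, i.e.\ such a system would not be birational (indeed could not exist). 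So the ``hard part'' you flag at the end is resolved the opposite way from what you expect: the order-two vanishing along $2\pi^{-1}(q)$ exhausts the entire remaining base locus, and the correct conclusion is that $\psi$ has no ordinary base point, which is precisely what places it in $\mathcal{E}_{24}$ rather than in $\mathcal{E}_{19}$ or $\mathcal{E}_{13}$ (both of which do carry the two ordinary points $p_1,p_2$, and this distinction is used later in the proof of Theorem~\ref{thmC}).
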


\begin{proof}
As $F_4$ is irreducible one can consider a general element of $F_4$, and then $\mathcal{C}_{2,L}$ is a curve of degree $4$, genus $1$, singular at $p$, and is the complete intersection of two quadrics. The map $\psi$ has no base point and belongs to $\mathcal{E}_{24}$.
\end{proof}
\end{enumerate}
\end{enumerate}

\subsection{Irreducible components}

The following statement, and Theorems \ref{thm:comp33} and \ref{thm:comp34} imply Theorem \ref{thmA}.

\begin{thm}\label{thm:comp35}
One has the inclusions: $\mathcal{E}_{14}\subset\overline{\mathcal{E}_{12}}$, $\mathcal{E}_{24}\subset\overline{\mathcal{E}_{23}}$, and $\mathcal{E}_{19}\subset\overline{\mathcal{E}_{12}}$.

The set $\mathrm{Bir}_{3,5}(\mathbb{P}^3_\mathbb{C})$ has four irreducible components: $\mathcal{E}_{12}$, $\mathcal{E}_{13}$, $\mathcal{E}_{23}$, and $\mathcal{E}_{27}=\mathfrak{ruled}_{3,5}$.
\end{thm}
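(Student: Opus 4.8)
The plan is to read off the components of $\mathrm{Bir}_{3,5}(\mathbb{P}^3_\mathbb{C})$ from the stratification just obtained, the backbone being the behaviour of $\mathfrak{p}_a(\mathcal{C}_1)$ under specialization. The case analysis of this section shows that a non-ruled $(3,5)$ map whose system $\Lambda_\psi$ contains a cubic without double line lies in exactly one of $\mathcal{E}_{12}$, $\mathcal{E}_{13}$, $\mathcal{E}_{14}$, $\mathcal{E}_{19}$, $\mathcal{E}_{23}$, $\mathcal{E}_{24}$, each irreducible by the lemmas computing $\mathrm{im}\,\kappa_1,\ldots,\kappa_4$ and by Theorem~\ref{thmC}, while $\mathcal{E}_{27}=\mathfrak{ruled}_{3,5}$ is a component by Proposition~\ref{Pro:compirr}. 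Since $\deg\mathcal{C}_1=\deg\psi^{-1}=5$ and $\deg\mathcal{C}_2=9-5=4$ are constant, Corollary~\ref{cor:formule} gives $\mathfrak{p}_a(\mathcal{C}_2)=\mathfrak{p}_a(\mathcal{C}_1)-1$, so the strata $\mathfrak{p}_2\in\{-1,0,1\}$ correspond to $\mathfrak{p}_a(\mathcal{C}_1)\in\{0,1,2\}$. As $\mathcal{C}_1$ is the image of a line under $\psi^{-1}$, a rational quintic, its arithmetic genus can only increase under specialization; hence $\mathfrak{p}_2$ is non-decreasing along specializations and $\overline{\mathcal{E}_i}\subset\{\mathfrak{p}_2\geq\mathfrak{p}_2(\mathcal{E}_i)\}$. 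This monotonicity will do most of the separating work.

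First I would establish the three inclusions, which absorb $\mathcal{E}_{14}$, $\mathcal{E}_{19}$, $\mathcal{E}_{24}$. The families $\mathcal{E}_{14}$ and $\mathcal{E}_{19}$ differ from $\mathcal{E}_{12}$ only through the incidence of the line component of $\mathcal{C}_2$ with its twisted cubic component: disjoint gives $\mathfrak{p}_a(\mathcal{C}_2)=-1$ (family $\mathcal{E}_{12}$), a simple contact gives $\mathfrak{p}_a(\mathcal{C}_2)=0$ ($\mathcal{E}_{14}$), a bisecant gives $\mathfrak{p}_a(\mathcal{C}_2)=1$ ($\mathcal{E}_{19}$). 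Starting from a general $\psi$ of $\mathcal{E}_{14}$ (resp. $\mathcal{E}_{19}$) with base curve $\Gamma\cup m$, I would move the line $m$ in a pencil $m_t$ into general position disjoint from $\Gamma$, follow the system $\mathrm{H}^0\big((\mathcal{I}_{\Gamma\cup m_t}\cap\mathcal{I}_{p_1}\cap\mathcal{I}_{p_2})(3h)\big)$ together with the two ordinary base points, and verify via the numerical criterion of Lemma~\ref{lem:bir} that the general member is a $(3,5)$ birational map, i.e. lies in $\mathcal{E}_{12}$; thus $\mathcal{E}_{14},\mathcal{E}_{19}\subset\overline{\mathcal{E}_{12}}$. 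The inclusion $\mathcal{E}_{24}\subset\overline{\mathcal{E}_{23}}$ is the delicate one, and I expect it to be the main obstacle: both families are base-point-free, but $\mathfrak{p}_a(\mathcal{C}_2)$ jumps from $0$ (a smooth rational quartic on a smooth quadric) to $1$ (a nodal complete intersection of two quadrics), so $\mathcal{C}_2=F_\psi^1$ does not deform flatly and one cannot simply move the curve. Instead I would exhibit an explicit one-parameter family of ideals $\mathcal{I}_{\psi_t}$, in the spirit of the matrix degenerations used to pass from $\mathcal{E}_2$ into $\overline{\mathcal{E}_3}$ and from $\mathscr{E}$ to $\overline{\mathcal{E}_6}$, turning the point of contact of $\mathcal{E}_{23}$ into the double point of $\mathcal{E}_{24}$, and check that the flat limit of $\Lambda_{\psi_t}$ is the system of a general element of $\mathcal{E}_{24}$ with no spurious base component.

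It then remains to see that $\mathcal{E}_{12}$, $\mathcal{E}_{13}$, $\mathcal{E}_{23}$ are pairwise non-nested and distinct from the ruled locus. Distinctness from $\mathfrak{ruled}_{3,5}$ is Proposition~\ref{Pro:compirr}. The monotonicity of $\mathfrak{p}_2$ immediately gives $\mathcal{E}_{12}\not\subset\overline{\mathcal{E}_{13}}\cup\overline{\mathcal{E}_{23}}$ and $\mathcal{E}_{23}\not\subset\overline{\mathcal{E}_{13}}$, since $\overline{\mathcal{E}_{13}}$ and $\overline{\mathcal{E}_{23}}$ are contained in $\{\mathfrak{p}_2\geq1\}$ and $\{\mathfrak{p}_2\geq0\}$ respectively, while $\mathcal{E}_{12}$ has $\mathfrak{p}_2=-1$ and $\mathcal{E}_{23}$ has $\mathfrak{p}_2=0$. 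For the opposite direction, the general $\mathcal{C}_2$ of $\mathcal{E}_{13}$ (a smooth elliptic quartic) and of $\mathcal{E}_{23}$ (a smooth rational quartic) are irreducible and contain no line, whereas along $\mathcal{E}_{12}$ the base curve always has a line component $m_t$ whose limit persists in the support of any boundary curve; hence $\mathcal{E}_{13},\mathcal{E}_{23}\not\subset\overline{\mathcal{E}_{12}}$. Finally $\mathcal{E}_{13}\not\subset\overline{\mathcal{E}_{23}}$: the arithmetic-genus-$1$ members of $\overline{\mathcal{E}_{23}}$ are the nodal curves of $\mathcal{E}_{24}$, of geometric genus $0$, which cannot specialize onto the smooth elliptic quartic of a general $\mathcal{E}_{13}$ (geometric genus $1$), since geometric genus does not rise under specialization; alternatively one compares dimensions, the two extra ordinary base points forcing $\dim\mathcal{E}_{13}>\dim\mathcal{E}_{23}$. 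Assembling these facts, $\mathcal{E}_{12}$, $\mathcal{E}_{13}$, $\mathcal{E}_{23}$ and $\mathcal{E}_{27}=\mathfrak{ruled}_{3,5}$ are exactly the four irreducible components, and together with Theorems~\ref{thm:comp33} and \ref{thm:comp34} this yields Theorem~\ref{thmA}.
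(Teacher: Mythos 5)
Your overall architecture coincides with the paper's: $\mathfrak{ruled}_{3,5}$ is a component by Proposition~\ref{Pro:compirr}, the six families exhaust the non-ruled locus by the section's case analysis, $\mathcal{E}_{14}$, $\mathcal{E}_{19}$, $\mathcal{E}_{24}$ are absorbed by inclusions, and the remaining three families are separated by semicontinuous invariants. Two of your tools, however, are weaker than what the paper uses. First, your $\mathfrak{p}_2$-monotonicity principle (``the arithmetic genus of $\mathcal{C}_1$ can only increase under specialization'') is asserted, not proved: to make it rigorous you must identify the $\mathcal{C}_2$ of a limit map with the locally Cohen--Macaulay curve attached to the flat limit of the $\mathcal{C}_2(t)$, using that the degree is constant equal to $4$ on the non-ruled locus in fixed bidegree; the paper sidesteps this entirely by direct semicontinuity of $\mathrm{h}^0\mathcal{I}_{\mathcal{C}_2}(2h)$ (number of quadrics through $\mathcal{C}_2$: $0$, $2$, $1$ for $\mathcal{E}_{12}$, $\mathcal{E}_{13}$, $\mathcal{E}_{23}$) and of $\mathrm{h}^0\mathcal{I}_{\mathcal{C}_2}(3h)$ ($6$ for $\mathcal{E}_{13}$ versus $7$ for $\mathcal{E}_{23}$), the latter replacing both your geometric-genus argument and your unverified dimension count $\dim\mathcal{E}_{13}>\dim\mathcal{E}_{23}$. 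Second, for $\mathcal{E}_{14},\mathcal{E}_{19}\subset\overline{\mathcal{E}_{12}}$ your plan of moving the line and re-proving birationality of the general member via Lemma~\ref{lem:bir} is workable but heavier than needed: the paper observes that in all three families $\mathcal{I}_\psi\subset\mathcal{I}_\ell\cdot\mathcal{I}_\Gamma$ (when $\ell\cap\Gamma=\{p\}\neq\emptyset$ this holds because all cubics of $\Lambda_\psi$ are singular at $p$), so that $\Lambda_\psi=\mathrm{H}^0\big((\mathcal{I}_\ell\cdot\mathcal{I}_\Gamma\cap\mathcal{I}_{p_1}\cap\mathcal{I}_{p_2})(3)\big)$ places $\mathcal{E}_{14}$ and $\mathcal{E}_{19}$ inside one irreducible family of linear systems whose general member lies in $\mathcal{E}_{12}$, with no birationality check at the degenerate position required.

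The genuine gap is the inclusion $\mathcal{E}_{24}\subset\overline{\mathcal{E}_{23}}$, which you correctly flag as the main obstacle but then only promise to settle by ``an explicit one-parameter family'' that you never exhibit; moreover your guiding picture --- ``turning the point of contact of $\mathcal{E}_{23}$ into the double point of $\mathcal{E}_{24}$'' --- misidentifies the geometry and would send you looking for the wrong degeneration. In the paper's construction the point of contact $q$, with ideal $\mathfrak{ct}_q=\mathcal{I}_q^2+(H_q)$, is held \emph{fixed} throughout the family and survives in the limit; what degenerates is the quadric through $\mathcal{C}_2$, and the double point $p$ of $\mathcal{E}_{24}$ appears as the vertex of the limiting cone. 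Concretely, one sets $Z_0=z_0+tz_3$, $Z_1=z_1$, $Z_2=z_2$, $Z_3=z_0-tz_3$ and $Q_t=Z_1Z_2-Z_0Z_3$, which is smooth for $t\neq 0$ and degenerates at $t=0$ to the cone $Q_0=z_1z_2-z_0^2$ with vertex $p$; the ideal $\mathcal{J}_t=(Q_t,\mathcal{S}_0,\mathcal{S}_1,\mathcal{S}_2)$, with the $\mathcal{S}_i$ in the shape of the equations of case $b_1)$ of \S\ref{subsubsec:C1smooth}, cuts out a rational quartic on a smooth quadric for $t\neq0$, so $\mathcal{I}_t=\mathcal{J}_t\cap\mathfrak{ct}_q$ is the ideal of an element of $\mathcal{E}_{23}$. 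The actual computational content, absent from your proposal, is the verification that $\mathcal{J}_0$ equals $\mathcal{J}'_0=(Q_0,z_0Q',z_1Q',z_2Q')$ --- using $az_0^2+bz_0z_1+cz_1^2\equiv z_1Q'$ and $az_2^2+bz_0z_2+cz_0^2\equiv z_2Q'$ modulo $Q_0$ --- so that $\mathcal{I}_t$ tends to the ideal of a \emph{general} element of $\mathcal{E}_{24}$ with no spurious base component. Without this (or an equivalent explicit check) the first assertion of the theorem, and with it the count of four components, is not established.
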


\begin{proof}
Let us first prove that $\mathcal{E}_{14}\subset\overline{\mathcal{E}_{12}}$. If $\psi$ belongs to $\mathcal{E}_{12}$, or to $\mathcal{E}_{14}$ the curve $\mathcal{C}_2$ is the union of a line $\ell$ and a twisted cubic $\Gamma$ such that $\mathrm{length}\,(\ell\cap\Gamma)\leq 1$. Let $\mathcal{I}_\ell$ (resp. $\mathcal{I}_\Gamma$) be the ideal of $\ell$ (resp.~$\Gamma$). We have $\mathcal{I}_\psi\subset\mathcal{I}_\ell\cap\mathcal{I}_\Gamma$. If $\psi$ belongs to $\mathcal{E}_{12}$, then $\ell\cap\Gamma=\emptyset$, and $\mathcal{I}_\ell\cap\mathcal{I}_\Gamma=\mathcal{I}_\ell\cdot\mathcal{I}_\Gamma$. And if~$\psi$ is in~$\mathcal{E}_{14}$, then all the cubics are singular at $p=\ell\cap\Gamma$ so $\mathcal{I}_\psi$ is again in $\mathcal{I}_\ell\cdot\mathcal{I}_\Gamma$.

\smallskip

Prove now that $\mathcal{E}_{24}\subset\overline{\mathcal{E}_{23}}$. Consider a general element $\psi$ of $\mathcal{E}_{24}$; the curve $\mathcal{C}_2$ is the complete intersection of a quadric $Q'=az_2+bz_0+cz_1$ passing through the double point $p$ and a cone $Q_0=z_1z_2-z_0^2$. Furthermore all the cubics of $\mathcal{I}_\psi$ are singular at $p$, and $\mathcal{I}_\psi\subset\mathcal{J}'_0=(Q_0,z_0Q',z_1Q',z_2Q')$. Let $\mathfrak{ct}_q$ be the ideal of the point of contact $q$; one has $\mathfrak{ct}_q=\mathcal{I}_q^2+(H_q)$ where $H_q$ is a plane passing through $q$. Denote by $\mathcal{I}_0$ the intersection of $\mathcal{J}'_0$ and $\mathfrak{ct}_q$. Set
\begin{align*}
& Z_0=z_0+tz_3,&& Z_1=z_1, && Z_2=z_2, && Z_3=z_0-tz_3, 
\end{align*}
\begin{align*}
& Q_t=Z_1Z_2-Z_0Z_3, && \mathcal{S}_0=aZ_0Z_2+bZ_0Z_3+cZ_1Z_3,\\
& \mathcal{S}_1=aZ_0^2+bZ_0Z_1+cZ_1^2, && \mathcal{S}_2=aZ_2^2+bZ_2Z_3+cZ_3^2.
\end{align*}
Hence $\mathcal{J}_t=(Q_t,\mathcal{S}_0,\mathcal{S}_1,\mathcal{S}_2)$ is the ideal of a rational quartic if $t\not=0$ (\emph{cf.} the equations in \S \ref{subsubsec:C1smooth} $b_1)$). The ideal $\mathcal{I}_t=\mathcal{J}_t\cap\mathfrak{ct}_q$ is the ideal $\mathcal{I}_\psi$ of $\psi\in\mathcal{E}_{23}$. Remark that if $t=0$, then 
\[
\mathcal{J}_0=(Q_0,z_0Q',az_0^2+bz_0z_1+cz_1^2,az_2^2+bz_0z_2+cz_0^2)
\]
but $az_0^2+bz_0z_1+cz_1^2=z_1Q'$ modulo $Q$, and $az_2^2+bz_0z_2+cz_0^2=z_2Q'$ modulo $Q$, that is $\mathcal{J}'_0=\mathcal{J}_0$. Therefore $\mathcal{I}_t$ tends to $\mathcal{I}_0$ as $t$ tends to $0$.

\smallskip
   
The inclusion $\mathcal{E}_{19}\subset\overline{\mathcal{E}_{12}}$ follows from $\Lambda_\psi=\mathrm{H}^0\big((\mathcal{I}_\ell\cdot\mathcal{I}_\Gamma\cap\mathcal{I}_{p_1}\cap\mathcal{I}_{p_2})(3)\big)$ found in $b_2)$\,i).

\smallskip

Note that $\mathcal{E}_{12}\not\subset\overline{\mathcal{E}_{13}}$ (resp. $\mathcal{E}_{12}\not\subset\overline{\mathcal{E}_{23}}$): if $\psi$ is in $\mathcal{E}_{12}$ then the associated $\mathcal{C}_2$ does not lie on a quadric whereas if $\psi$ belongs to $\mathcal{E}_{13}$ (resp. $\mathcal{E}_{23}$) then $\mathcal{C}_2$ lies on two quadrics (resp. one quadric). Conversely $\mathcal{E}_{13} \not\subset\overline{\mathcal{E}_{12}}$ (resp. $\mathcal{E}_{23}\not\subset\overline{\mathcal{E}_{12}}$): if $\psi$ is an element of $\mathcal{E}_{13}$ (resp. $\mathcal{E}_{23}$), then $\mathcal{C}_2$ is smooth and irreducible whereas the associated $\mathcal{C}_2$ of a general element of $\mathcal{E}_{12}$ is the disjoint union of a twisted cubic and a line.

\smallskip

Let us now justify that $\mathcal{E}_{23}\not\subset\overline{\mathcal{E}_{13}}$: the linear system of an element of $\mathcal{E}_{23}$ has a smooth surface whereas the linear system of an element of $\mathcal{E}_{13}$ does not. Conversely $\mathcal{E}_{13}\not\subset\overline{\mathcal{E}_{23}}$; indeed $\mathrm{h}^0\mathcal{I}_{\mathcal{C}_2}(3h)=6$ for a birational map of $\mathcal{E}_{13}$ and $\mathrm{h}^0\mathcal{I}_{\mathcal{C}_2}(3h)=7$ for a birational map of $\mathcal{E}_{23}$. 
\end{proof}

In bidegree $(3,5)$ the description of $\mathrm{Bir}_{3,5,\mathfrak{p}_2}(\mathbb{P}^3_\mathbb{C})$ is very different from those of smaller bidegrees. Let us now prove Theorem \ref{thmC}.

\begin{thm}
The set $\mathrm{Bir}_{3,5,\mathfrak{p}_2}(\mathbb{P}^3_\mathbb{C})$ is empty as soon as $\mathfrak{p}_2\not\in\{-1,\,0,\,1\}$ and 
\begin{itemize}
\item[$\bullet$] if $\mathfrak{p}_2=-1$, then $\mathrm{Bir}_{3,5,\mathfrak{p}_2}(\mathbb{P}^3_\mathbb{C})$ is non-empty, and irreducible;

\item[$\bullet$] if $\mathfrak{p}_2=0$, then $\mathrm{Bir}_{3,5,\mathfrak{p}_2}(\mathbb{P}^3_\mathbb{C})$ is non-empty, and has two irreducible components: one formed by the birational maps of $\mathcal{E}_{14}$, and the other one by the elements of $\mathcal{E}_{23}$;

\item[$\bullet$] if $\mathfrak{p}_2=1$, then $\mathrm{Bir}_{3,5,\mathfrak{p}_2}(\mathbb{P}^3_\mathbb{C})$ is non-empty, and has three irreducible components: one formed by the birational maps of $\mathcal{E}_{13}$, a second one formed by the birational maps of $\mathcal{E}_{19}$, and a third one by the elements of $\mathcal{E}_{24}$.
\end{itemize}
\end{thm}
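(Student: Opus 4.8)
The plan is to control everything through the arithmetic genus of $\mathcal{C}_1$. A $(3,5)$ map has $\deg\mathcal{C}_1=5$ and, by the identity $(\deg\psi)^2=\deg\psi^{-1}+\deg\mathcal{C}_2$, $\deg\mathcal{C}_2=4$; feeding these into Corollary \ref{cor:formule} gives $\mathfrak{p}_2=\mathfrak{p}_a(\mathcal{C}_2)=\mathfrak{p}_a(\mathcal{C}_1)-1$. Since $\psi$ is non-ruled, its general surface in $\Lambda_\psi$ has isolated singularities, hence no double line, so the case analysis of the preceding subsection applies; there $\mathcal{C}_1$ is a non-degenerate curve of degree $5$ (Lemma \ref{Lem:notonaplane}), whence $0\le\mathfrak{p}_a(\mathcal{C}_1)\le2$ and $\mathfrak{p}_2\in\{-1,0,1\}$. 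This already yields the emptiness of $\mathrm{Bir}_{3,5,\mathfrak{p}_2}(\mathbb{P}^3_\mathbb{C})$ outside that range, while the explicit members of $\mathcal{E}_{12}$, of $\mathcal{E}_{14}$ and $\mathcal{E}_{23}$, and of $\mathcal{E}_{13}$, $\mathcal{E}_{19}$ and $\mathcal{E}_{24}$ built in the general description give non-emptiness in the three remaining cases.

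Next I would read the stratification off that same analysis: $\mathfrak{p}_a(\mathcal{C}_1)=0$ forces $\psi\in\mathcal{E}_{12}$ (\S\ref{subsubsec:C1smooth}); $\mathfrak{p}_a(\mathcal{C}_1)=1$ splits, according to whether every element of $\Lambda_\psi$ is singular at the distinguished point $p$ or not, into $\mathcal{E}_{14}$ and $\mathcal{E}_{23}$; and $\mathfrak{p}_a(\mathcal{C}_1)=2$ splits, according to whether $\mathcal{C}_1$ has a triple point or two double points (and, in the latter case, whether all surfaces are singular at both), into $\mathcal{E}_{13}$, $\mathcal{E}_{19}$ and $\mathcal{E}_{24}$. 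Thus set-theoretically $\mathrm{Bir}_{3,5,-1}=\mathcal{E}_{12}$, $\mathrm{Bir}_{3,5,0}=\mathcal{E}_{14}\cup\mathcal{E}_{23}$ and $\mathrm{Bir}_{3,5,1}=\mathcal{E}_{13}\cup\mathcal{E}_{19}\cup\mathcal{E}_{24}$. Each piece is irreducible: for $\mathcal{E}_{13},\mathcal{E}_{14},\mathcal{E}_{19},\mathcal{E}_{24}$ by the lemmas presenting them as images of the irreducible incidence varieties $F_1,\dots,F_4$ under the maps $\kappa_1,\dots,\kappa_4$, for $\mathcal{E}_{23}$ by the explicit parametrisation of case $b_1$, and for $\mathcal{E}_{12}$ because its residual curve $\mathcal{C}_2=\Gamma\sqcup\ell$ ranges over the irreducible family of disjoint (twisted cubic, line) pairs over which $\Lambda_\psi$ is then cut out by open conditions. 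This settles the case $\mathfrak{p}_2=-1$.

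To count components in the two remaining strata I would exploit Theorem \ref{thm:comp35}, which places each family inside a single ambient irreducible component of $\mathrm{Bir}_{3,5}(\mathbb{P}^3_\mathbb{C})$: $\mathcal{E}_{12},\mathcal{E}_{14},\mathcal{E}_{19}\subset\overline{\mathcal{E}_{12}}$, $\mathcal{E}_{23},\mathcal{E}_{24}\subset\overline{\mathcal{E}_{23}}$ and $\mathcal{E}_{13}\subset\overline{\mathcal{E}_{13}}$, the three ambient components being pairwise distinct. A family dense in its ambient component, here $\mathcal{E}_{23}$ and $\mathcal{E}_{13}$, cannot lie in the closure of a family coming from a different component; this gives at once $\mathcal{E}_{23}\not\subset\overline{\mathcal{E}_{14}}$ (stratum $\mathfrak{p}_2=0$) and $\mathcal{E}_{13}\not\subset\overline{\mathcal{E}_{19}},\overline{\mathcal{E}_{24}}$ (stratum $\mathfrak{p}_2=1$). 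The inclusions $\mathcal{E}_{19},\mathcal{E}_{24}\not\subset\overline{\mathcal{E}_{13}}$ are ruled out by upper semicontinuity of the largest multiplicity of a singular point of $\mathcal{C}_1$, which equals $3$ on $\mathcal{E}_{13}$ but only $2$ on $\mathcal{E}_{19}$ and $\mathcal{E}_{24}$; and $\mathcal{E}_{24}\not\subset\overline{\mathcal{E}_{19}}$ because the condition ``every surface of $\Lambda_\psi$ is singular at both $p$ and $q$'' is closed, holds on $\overline{\mathcal{E}_{19}}$, and fails on $\mathcal{E}_{24}$ (where some surface is smooth at $q$).

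The genuinely hard non-inclusions, where I expect the main work to lie, are the last two, $\mathcal{E}_{14}\not\subset\overline{\mathcal{E}_{23}}$ and $\mathcal{E}_{19}\not\subset\overline{\mathcal{E}_{24}}$. Here the cheap invariants cannot help: within a fixed stratum the cohomological data $\mathrm{h}^0\mathcal{I}_{\mathcal{C}_2}(2h)$ and $\mathrm{h}^0\mathcal{I}_{\mathcal{C}_2}(3h)$ are determined by $\mathfrak{p}_2$, hence coincide on the competing families, and since a general irreducible $\mathcal{C}_2$ of $\mathcal{E}_{23}$ (resp. $\mathcal{E}_{24}$) degenerates to the reducible $\mathcal{C}_2$ of $\mathcal{E}_{14}$ (resp. $\mathcal{E}_{19}$), reducibility of $\mathcal{C}_2$ alone does not obstruct the inclusion. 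The clean way round is to observe that the locus of $\overline{\mathcal{E}_{23}}$ (resp. $\overline{\mathcal{E}_{24}}$) on which $\mathcal{C}_2$ acquires a line component is a proper closed subset, so an inclusion $\mathcal{E}_{14}\subset\overline{\mathcal{E}_{23}}$ (resp. $\mathcal{E}_{19}\subset\overline{\mathcal{E}_{24}}$) would force the strict inequality $\dim\mathcal{E}_{14}<\dim\mathcal{E}_{23}$ (resp. $\dim\mathcal{E}_{19}<\dim\mathcal{E}_{24}$). It therefore suffices to compute these dimensions from the incidence varieties $F_1,\dots,F_4$ and to verify the opposite inequalities $\dim\mathcal{E}_{14}\ge\dim\mathcal{E}_{23}$ and $\dim\mathcal{E}_{19}\ge\dim\mathcal{E}_{24}$, the two extra ordinary base points carried by $\mathcal{E}_{14}$ and $\mathcal{E}_{19}$ supplying the missing dimensions. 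Assembling the three strata then gives the component counts $1$, $2$ and $3$; combined with Theorems \ref{thm:comp33}, \ref{thm:comp34} and \ref{thm:comp35}, this also yields Theorem \ref{thmA}.
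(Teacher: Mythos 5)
Your overall architecture coincides with the paper's: stratify by $\mathfrak{p}_a(\mathcal{C}_1)\in\{0,1,2\}$ using Corollary \ref{cor:formule} and Lemma \ref{Lem:notonaplane} (giving emptiness outside $\mathfrak{p}_2\in\{-1,0,1\}$), identify the strata with $\mathcal{E}_{12}$, with $\mathcal{E}_{14}\cup\mathcal{E}_{23}$, and with $\mathcal{E}_{13}\cup\mathcal{E}_{19}\cup\mathcal{E}_{24}$ exactly as in \S\ref{subsubsec:C1smooth}--\ref{subsubsec:C1notsmooth}, get irreducibility of each family from the incidence varieties $F_1,\dots,F_4$ and the maps $\kappa_1,\dots,\kappa_4$, and settle $\mathfrak{p}_2=-1$ by Theorem \ref{thm:comp35}. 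Several of your pairwise separations are valid and even slicker than the paper's: deducing $\mathcal{E}_{23}\not\subset\overline{\mathcal{E}_{14}}$ and $\mathcal{E}_{13}\not\subset\overline{\mathcal{E}_{19}},\overline{\mathcal{E}_{24}}$ from the ambient-component decomposition of Theorem \ref{thm:comp35} (the paper instead argues directly: smooth members of $\Lambda_\psi$ versus all members singular, smooth versus singular base curve), and using semicontinuity of the multiplicity of $\mathcal{C}_1$ for $\mathcal{E}_{19},\mathcal{E}_{24}\not\subset\overline{\mathcal{E}_{13}}$ (the paper counts double points for the first and, for $\mathcal{E}_{24}\not\subset\overline{\mathcal{E}_{13}}$, uses the closed condition that some plane $h$ through the double point satisfies $(hQ_1,hQ_2)\subset\Lambda_\psi$, which holds on $\mathcal{E}_{13}$ but fails on general $\mathcal{E}_{24}$).

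The genuine gap is precisely at the two separations you flag as hard, $\mathcal{E}_{14}\not\subset\overline{\mathcal{E}_{23}}$ and $\mathcal{E}_{19}\not\subset\overline{\mathcal{E}_{24}}$: there your proof stops being a proof. You reduce them to the inequalities $\dim\mathcal{E}_{14}\ge\dim\mathcal{E}_{23}$ and $\dim\mathcal{E}_{19}\ge\dim\mathcal{E}_{24}$ and then merely assert them --- ``the two extra ordinary base points supplying the missing dimensions'' is a hope, not a computation; one would have to control the fibres of the $\kappa_i$, the dimensions of the curve families $\mathcal{R}_{p,1}$, $\mathcal{R}_{p,q,2}$, etc., and additionally justify that a general member of $\mathcal{E}_{14}$ (resp. $\mathcal{E}_{19}$) would necessarily land in the proper closed degeneration locus of $\overline{\mathcal{E}_{23}}$ (resp. $\overline{\mathcal{E}_{24}}$), i.e.\ that $\psi\mapsto\mathcal{C}_2$ behaves constructibly on the closure. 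Moreover your premise that ``the cheap invariants cannot help'' here is wrong, and it is exactly a cheap invariant the paper uses: every map in $\mathcal{E}_{23}$ carries a point of contact, i.e.\ a connected zero-dimensional component of the base scheme of length $\ge 3$ (locally $\mathfrak{ct}_q=\mathcal{I}_q^2+(H_q)$ has colength $3$), and this persists on all of $\overline{\mathcal{E}_{23}}$ since base schemes only grow under specialization; a general member of $\mathcal{E}_{14}$ has as punctual base components only the two ordinary (length-one) points $p_1,p_2$, so $\mathcal{E}_{14}\not\subset\overline{\mathcal{E}_{23}}$ with no dimension count at all. For the second case the paper compares the number of double points of the linear system (two for $\mathcal{E}_{19}$, only one --- plus a point of contact --- for $\mathcal{E}_{24}$), and the same contact-point invariant applies there as well. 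So either carry out your dimension computations in full (be warned that plausible counts give equality, e.g.\ $\dim\mathcal{E}_{14}=\dim\mathcal{E}_{23}$, which still yields your contradiction but must actually be proved), or, more economically, replace this step by the base-scheme/double-point invariants; as written, the component counts for $\mathfrak{p}_2=0$ and $\mathfrak{p}_2=1$ are not established.
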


\begin{proof}
$\bullet$ Assume $\mathfrak{p}_2=-1$. In that case only one family appears : $\mathcal{E}_{12}$ (\emph{see} \S\,\ref{subsubsec:C1smooth}), and accor\-ding to Theorem \ref{thm:comp35} the family $\mathcal{E}_{12}$ is already an irreducible component of $\mathrm{Bir}_{3,5}(\mathbb{P}^3_\mathbb{C})$ so an irreducible component of $\mathrm{Bir}_{3,5,-1}(\mathbb{P}^3_\mathbb{C})$.

\medskip

$\bullet$ Suppose $\mathfrak{p}_2=0$. We found two families : $\mathcal{E}_{14}$ (case $a_1$) of \S\,\ref{subsubsec:C1smooth}), and $\mathcal{E}_{23}$ (case $b_1$) of \S\,\ref{subsubsec:C1notsmooth}). Note that for $\psi$ general in $\mathcal{E}_{23}$ $\Lambda_\psi$ contains smooth cubics whereas all cubics of $\Lambda_\psi$ are singular as soon as $\psi$ belongs to $\mathcal{E}_{14}$. Hence $\mathcal{E}_{23}\not\subset\overline{\mathcal{E}_{14}}$.

Take a general element of $\mathcal{E}_{14}$; it hasn't a base scheme of dimension $0$, connected and of length $\geq 3$ whereas elements of $\overline{\mathcal{E}_{23}}$ have. Therefore $\mathcal{E}_{14}\not\subset\overline{\mathcal{E}_{23}}$.

\medskip

$\bullet$ Assume last that $\mathfrak{p}_2=1$. Our study gives three families: $\mathcal{E}_{13}$, $\mathcal{E}_{19}$ and $\mathcal{E}_{24}$ (cases $a_2$), $b_2$)i) and $b_2$)ii) of \S\,\ref{subsubsec:C1notsmooth}). The general element of $\mathcal{E}_{19}$ has two double points whereas a general element of~$\mathcal{E}_{13}$ (resp. $\mathcal{E}_{24}$) has only one; thus $\mathcal{E}_{19}\not\subset\overline{\mathcal{E}_{13}}$ and $\mathcal{E}_{19}\not\subset\overline{\mathcal{E}_{24}}$.

Take a general element in $\mathcal{E}_{13}$; its base locus is a smooth curve. On the contrary if $\psi$ belongs to~$\mathcal{E}_{19}$ (resp. $\mathcal{E}_{24}$), then the base locus of $\psi$ is a singular curve. Thus $\mathcal{E}_{13}\not\subset\overline{\mathcal{E}_{19}}$ (resp. $\mathcal{E}_{13}\not\subset\overline{\mathcal{E}_{24}}$).

If $\psi$ is a general element of $\mathcal{E}_{24}$ its base locus is an irreducible curve that is not the case if~$\psi~\in~\mathcal{E}_{19}$ so $\mathcal{E}_{24}\not\subset\overline{\mathcal{E}_{19}}$.

Let us now consider a general element of $\mathcal{E}_{24}$, the tangent plane at all cubic surfaces at the point of contact doesn't contain the double point $p$; hence if we denote by $Q_1$ and $Q_2$ the quadrics containing $\mathcal{C}_2$ there doesn't exist a plane $h$ passing through $p$ such that $(hQ_1,hQ_2)\subset\Lambda_\psi$. But if we take $\psi$ in $\mathcal{E}_{13}$ then $\Lambda_\psi=\mathrm{H}^0\big((\mathcal{I}_{\mathcal{C}_2}\cdot\mathcal{I}_p\cap\mathcal{I}_{p_1}\cap\mathcal{I}_{p_2})(3)\big)$ with $p_1$, $p_2$ two ordinary base points, and~$p$ the triple point lying on $\mathcal{C}_1$. If $h$ is the plane passing through $p$, $p_1$ and $p_2$, if $\mathcal{I}_{\mathcal{C}_2}=(Q_1,Q_2)$, then $(hQ_1,hQ_2)\subset\Lambda_\psi$. Thus $\mathcal{E}_{24}\not\subset\overline{\mathcal{E}_{13}}$.
\end{proof}

\section{Relations with \textsc{Hudson}'s invariants}\label{Sec:singcubsurf}

To prove the birationality of a linear system of cubics, the local properties of $\mathcal{C}_1$ and $\mathcal{C}_2$ are required. For instance to apply Lemma \ref{lem:bir} one needs to understand the support of $\mathcal{C}_1\cup\mathcal{C}_2$ and the local intersection of $\mathcal{C}_1$ with a general element of $\Lambda_\psi$ at any point of $\mathcal{C}_1\cup\mathcal{C}_2$. So in the following table we make a schematic picture of the tangent cone of $\mathcal{C}_1\cup\mathcal{C}_2$ at one of its singular point in the different cases considered by \textsc{Hudson}. Let us note that the degree of the tangent cone of $\mathcal{C}_1\cup\mathcal{C}_2$ at a point of $\mathcal{C}_1\cup\mathcal{C}_2$ varies from $1$ to $6$. In particular if the linear system has a double point (resp. a double point of contact), then it is a complete intersection of two quadric cones (resp. of one quadric cone and one cubic cone). We draw pictures only when the quadric cone is irreducible. If the linear system has a binode, the tangent cone of $\mathcal{C}_1\cup\mathcal{C}_2$ has degree $5$; more precisely for a binode at $p=(z_0,z_1,z_2)$ whose fixed plane is $z_0$, {\it i.e.} $\mathcal{I}_\psi\subset\mathcal{I}_p\cdot(z_0)$, then the ideal of the tangent cone of $\mathcal{C}_1\cup\mathcal{C}_2$ at $p$ is $(z_0z_1,z_0z_2,P)$ where $P$ denotes an element of $\mathbb{C}[z_1,z_2]_4$. In our pictures the marked plane of the binode is vertical.

\smallskip

Convention. If the point is black (resp. white) then $\mathcal{C}_2$ does not pass (resp. passes through) through the point. For all cases mentioned in the paper we precise $(\widetilde{d}_1,\widetilde{d}_2)$ where $\widetilde{d}_i$ is the degree of the tangent cone of~$\mathcal{C}_i$ at $p$.

\smallskip

Let us mention that this table in which we propose local illustrations could help the reader to visualize the different examples but the proofs are not based on it.

\begin{landscape}
\begin{center}
  \begin{tabular}{ | c | c | c | c | c | c | c |}
   \hline    
     \multicolumn{7}{ |c| }{D.p. of contact} \\
     \hline
      \begin{pspicture}(-1,-1)(1,1)\psdots[dotstyle=*](-0.25,0)\psdots[dotstyle=*](0.18,-0.30)\psdots[dotstyle=*](-0.15,0.35)\psdots[dotstyle=*](0.25,0)\psdots[dotstyle=*](0,0.5)\psdots[dotstyle=*](0,-0.5)\psellipse(0,0)(0.25,0.5)\end{pspicture}& \begin{pspicture}(-1,-1)(1,1)\psdots[dotstyle=o](-0.25,0)\psdots[dotstyle=*](0.18,-0.30)\psdots[dotstyle=*](-0.15,0.35)\psdots[dotstyle=*](0.25,0)\psdots[dotstyle=*](0,0.5)\psdots[dotstyle=*](0,-0.5)\psellipse(0,0)(0.25,0.5)\end{pspicture} & \begin{pspicture}(-1,-1)(1,1)\psdots[dotstyle=o](-0.25,0)\psdots[dotstyle=o](0.18,-0.30)\psdots[dotstyle=*](-0.15,0.35)\psdots[dotstyle=*](0.25,0)\psdots[dotstyle=*](0,0.5)\psdots[dotstyle=*](0,-0.5)\psellipse(0,0)(0.25,0.5)\end{pspicture} & \begin{pspicture}(-1,-1)(1,1)\psdots[dotstyle=o](-0.25,0)\psdots[dotstyle=o](0.18,-0.30)\psdots[dotstyle=o](-0.15,0.35)\psdots[dotstyle=*](0.25,0)\psdots[dotstyle=*](0,0.5)\psdots[dotstyle=*](0,-0.5)\psellipse(0,0)(0.25,0.5)\end{pspicture} & \begin{pspicture}(-1,-1)(1,1)\psdots[dotstyle=o](-0.25,0)\psdots[dotstyle=o](0.18,-0.30)\psdots[dotstyle=o](-0.15,0.35)\psdots[dotstyle=o](0.25,0)\psdots[dotstyle=*](0,0.5)\psdots[dotstyle=*](0,-0.5)\psellipse(0,0)(0.25,0.5)\end{pspicture}& \begin{pspicture}(-1,-1)(1,1)\psdots[dotstyle=o](-0.25,0)\psdots[dotstyle=o](0.18,-0.30)\psdots[dotstyle=o](-0.15,0.35)\psdots[dotstyle=o](0.25,0)\psdots[dotstyle=o](0,0.5)\psdots[dotstyle=*](0,-0.5)\psellipse(0,0)(0.25,0.5)\end{pspicture}& \begin{pspicture}(-1,-1)(1,1)\psdots[dotstyle=o](-0.25,0)\psdots[dotstyle=o](0.18,-0.30)\psdots[dotstyle=o](-0.15,0.35)\psdots[dotstyle=o](0.25,0)\psdots[dotstyle=o](0,0.5)\psdots[dotstyle=o](0,-0.5)\psellipse(0,0)(0.25,0.5)\end{pspicture}  \\ 
      &  &  &  & $(2,4)$ &  &   \\ 
   \hline
 \multicolumn{7}{ |c| }{binode} \\
     \hline    
 \begin{pspicture}(-0.25,-0.25)(0.75,1)\psdots[dotstyle=*](0,0)(0,0.25)(0,0.5)(0,0.75)(0.25,0.75)\end{pspicture} 
                  & \begin{pspicture}(-0.25,-0.25)(0.75,1)\psdots[dotstyle=*](0,0)(0,0.25)(0,0.5)(0,0.75)
                    \psdots[dotstyle=o](0.25,0.75)\end{pspicture}  
                  & \begin{pspicture}(-0.25,-0.25)(0.75,1)\psdots[dotstyle=*](0.25,0.75)(0,0.25)(0,0.5)(0,0.75)
                    \psdots[dotstyle=o](0,0)\end{pspicture}
                  & \begin{pspicture}(-0.25,-0.25)(0.75,1)\psdots[dotstyle=*](0,0.25)(0,0.5)(0,0.75)
                    \psdots[dotstyle=o](0,0)(0.25,0.75)\end{pspicture}  
                  & \begin{pspicture}(-0.25,-0.25)(0.75,1)\psdots[dotstyle=*](0.25,0.75)(0,0.5)(0,0.75)
                    \psdots[dotstyle=o](0,0)(0,0.25)\end{pspicture}  & &\\    
                   &  & &  &  & &\\ 
     \hline                   
\begin{pspicture}(-0.25,-0.25)(0.75,1)\psdots[dotstyle=*](0,0.5)(0,0.75)
                    \psdots[dotstyle=o](0,0)(0,0.25)(0.25,0.75)\end{pspicture}
                  & \begin{pspicture}(-0.25,-0.25)(0.75,1)\psdots[dotstyle=*](0.25,0.75)(0,0.75)
                    \psdots[dotstyle=o](0,0)(0,0.25)(0,0.5)\end{pspicture}
                 & \begin{pspicture}(-0.25,-0.25)(0.75,1)\psdots[dotstyle=*](0,0)
                    \psdots[dotstyle=o](0,0.25)(0,0.5)(0.25,0.75)(0,0.75)\end{pspicture}
                 & \begin{pspicture}(-0.25,-0.25)(0.75,1)\psdots[dotstyle=*](0.25,0.75)
                    \psdots[dotstyle=o](0,0)(0,0.25)(0,0.5)(0,0.75)\end{pspicture}
                 & \begin{pspicture}(-0.25,-0.25)(0.75,1)\psdots[dotstyle=o](0,0)(0,0.25)(0,0.5)(0,0.75)(0.25,0.75)\end{pspicture} & & \\    
                $(2,3)$ & $(2,3)'$ &  & $(1,4)$ &  & & \\ 
     \hline
 \multicolumn{7}{ |c| }{D.p.'s} \\
     \hline 
     \begin{pspicture}(-0.25,-0.25)(0.75,1)\psdots[dotstyle=*](0,0)(0.5,0)(0,0.5)(0.5,0.5)\end{pspicture} 
     & \begin{pspicture}(-0.25,-0.25)(0.75,1)\psdots[dotstyle=*](0,0)(0.5,0)(0,0.5)\psdots[dotstyle=o](0.5,0.5)\end{pspicture}
     & \begin{pspicture}(-0.25,-0.25)(0.75,1)\psdots[dotstyle=*](0,0)(0.5,0)\psdots[dotstyle=o](0,0.5)(0.5,0.5)\end{pspicture}
     & \begin{pspicture}(-0.25,-0.25)(0.75,1)\psdots[dotstyle=*](0,0)\psdots[dotstyle=o](0.5,0)(0,0.5)(0.5,0.5)\end{pspicture}
     & \begin{pspicture}(-0.25,-0.25)(0.75,1)\psdots[dotstyle=o](0,0)(0.5,0)(0,0.5)(0.5,0.5)\end{pspicture} & & \\ 
     & & $(2,2)$ &  &  & & \\ 
     \hline
 \multicolumn{7}{ |c| }{pt of osculation} \\
     \hline
     \begin{pspicture}(-0.25,-0.25)(0.75,1)\psdots[dotstyle=*](0.25,0)(0.25,0.25)(0.25,0.5)\end{pspicture} 
    & \begin{pspicture}(-0.25,-0.25)(0.75,1)\psdots[dotstyle=*](0.25,0)(0.25,0.25)\psdots[dotstyle=o](0.25,0.5)\end{pspicture} 
      & \begin{pspicture}(-0.25,-0.25)(0.75,1)\psdots[dotstyle=*](0.25,0)\psdots[dotstyle=o](0.25,0.25)(0.25,0.5)\end{pspicture} 
    & \begin{pspicture}(-0.25,-0.25)(0.75,1)\psdots[dotstyle=o](0.25,0)(0.25,0.25)(0.25,0.5)\end{pspicture}& & & \\
    \hline
 \multicolumn{7}{ |c| }{pt of contact} \\
     \hline
     \begin{pspicture}(-0.25,-0.25)(0.75,1)\psdots[dotstyle=*](0.25,0)(0.25,0.25)\end{pspicture} 
    & \begin{pspicture}(-0.25,-0.25)(0.75,1)\psdots[dotstyle=*](0.25,0)\psdots[dotstyle=o](0.25,0.25)\end{pspicture} 
      &  \begin{pspicture}(-0.25,-0.25)(0.75,1)\psdots[dotstyle=o](0.25,0)(0.25,0.25)\end{pspicture}  
    & 
& & & \\
    \hline
  \end{tabular}
\end{center}
\begin{center}
\text{\texttt{Table} $1$}
\end{center}
\end{landscape}

\appendix

\section{\textsc{Hudson}'s Table}\label{Sec:hudsontable}

In this appendix we give a reproduction of what \textsc{Hudson} called ``Cubic Space Transformations''. The first (resp. second, resp. third, resp. fourth) table concerns birational maps of bidegrees $(3,2)$, $(3,3)$ and $(3,4)$ (resp. $(3,5)$, resp. $(3,6)$, resp. $(3,7)$, $(3,8)$ and $(3,9)$). 

\begin{landscape}
\begin{tabular}{|c|c|c|c|c|c|c|c|l|l|}
\hline
 & & & & & & & & & \\ 
 number& degrees& D.p. of & binode & D. p.'s & pt of & pt of & ordinary & $F$-curves & Remarks\\
& & contact & & & osculation & contact & pts &  & \\
 & & & & & & & & & \\ 
 \hline 
 $1$ & $3$--$2$ & $\cdot$& $\cdot$& $\cdot$& $\cdot$& $\cdot$& $\cdot$ & $l^2$, $l_1$, $l_2$, $l_3$ & {\small $3$ generators meet}\\ 
  & & & & & & & & & {\small double line}\\ 
 \hline
$2$ & $3$--$3$ & $\cdot$ & $\cdot$ & $\cdot$ & $\cdot$ & $\cdot$ & $\cdot$ & {\small $\omega_6$ (genus $3$)} &\\ 
$3$ & & $\cdot$ & $\cdot$ & $1$ & $\cdot$ & $\cdot$ & $\cdot$ & {\small $\omega_6\equiv O^2$ (genus $3$)} &\\ 
$4$ & & $1$ & $\cdot$ & $\cdot$ & $\cdot$ & $\cdot$ & $\cdot$ &{\small $\omega_6\equiv O^4$ (rational)}&\\ 
$5$ & & $\cdot$& $\cdot$& $\cdot$& $\cdot$& $\cdot$& $2$ & {\small $l^2$, $l_1$, $l_2$}& {\small $2$ generators meet}\\ 
  & & & & & & & & & {\small double line}\\ 
\hline
$6$ & $3$--$4$ & $\cdot$ & $\cdot$ & $\cdot$ & $\cdot$ & $\cdot$ & $1$ & {\small $\omega_5$ (genus $1$)}&\\ 
$7$ & & $\cdot$ & $\cdot$ & $1$ & $\cdot$ & $\cdot$ & $1$ & {\small $\omega_5\equiv O_1^2$ (genus $1$)}&\\ 
$8$ & & $\cdot$ & $1$ & $\cdot$ & $\cdot$ & $\cdot$ & $1$ & {\small $\omega_5\equiv O_1^3(2)$ (rational)} &\\ 
$9$ & & $1$ & $\cdot$ & $\cdot$ & $\cdot$ & $\cdot$ & $1$ & {\small $\omega_3\equiv O_1^2$, $l_1\equiv O_1$, $l_2\equiv O_1$} &\\ 
$10$ & & $1$ & $1$ & $\cdot$ & $\cdot$ & $\cdot$ & $1$ & {\small $\omega_2\equiv O_1O_2$, $l\equiv O_1O_2$ (osculation)}& {\small $(\phi)$ touch plane}\\
  & & & & & & & & & {\small along $l$}\\  
$11$ & & $\cdot$ & $\cdot$ & $\cdot$ & $\cdot$ & $\cdot$ & $4$ & {\small $l^2$, $l_1$} & {\small generator meets}\\ 
  & & & & & & & & & {\small double line}\\  
\hline
\end{tabular}
\end{landscape}

\begin{landscape}
\bigskip
\begin{tabular}{|c|c|c|c|c|c|c|l|l|}
\hline
 & & & & & & & & \\ 
 number& D.p. of & binode & D. p.'s & pt of & pt of & ordinary & $F$-curves & Remarks\\
& contact & & & osculation & contact & pts &  & \\
 & & & & & & & & \\ 
 \hline 
$12$ & $\cdot$ & $\cdot$ & $\cdot$ & $\cdot$ & $\cdot$ & $2$ & {\small $\omega_3$ (rational), $l$}&\\ 
$13$ & $\cdot$ & $\cdot$ & $1$ & $\cdot$ & $\cdot$ & $2$ & {\small $\omega_4\equiv O_1$ (genus 1)}&\\ 
$14$ & $\cdot$ & $\cdot$ & $1$ & $\cdot$ & $\cdot$ & $2$ & {\small $\omega_3\equiv O_1$ (rational), $l\equiv O_1$}&\\ 
$15$ & $\cdot$ & $1$ & $\cdot$ & $\cdot$ & $\cdot$ & $2$ & {\small $\omega_4\equiv O_1^2(2)$}&\\ 
$16$ & $\cdot$ & $1$ & $\cdot$ & $\cdot$ & $\cdot$ & $2$ & {\small $\omega_2\equiv O_1(1)$, $l_1\equiv O_1(1)$, $l_2\equiv O_1$} &\\ 
$17$ & $1$ & $\cdot$ & $\cdot$ & $\cdot$ & $\cdot$ & $2$ &{\small $\omega_3\equiv O_1^2$, $l_1\equiv O_1$}&\\ 
$18$ & $1$ & $\cdot$ & $\cdot$ & $\cdot$ & $\cdot$ & $2$ &{\small $l\equiv O_1$ (contact), $l_1\equiv O_1$, $l_2\equiv O_1$}& {\small $(\phi)$ touch quadric}\\ 
$19$ & $\cdot$ & $\cdot$ & $2$ & $\cdot$ & $\cdot$ & $2$ & {\small $\omega_3\equiv O_1O_2$ (rational), $l\equiv O_1O_2$}&\\ 
$20$ & $\cdot$ & $1$ & $1$ & $\cdot$ & $\cdot$ & $2$ & {\small $\omega_2\equiv O_1(1)O_2$, $l_1\equiv O_1O_2$, $l_2\equiv O_1(1)$}&\\ 
$21$ & $1$ & $\cdot$ & $1$ & $\cdot$ & $\cdot$ & $2$ & {\small $l\equiv O_1O_2$ (contact), $l_1\equiv O_1$, $l_2\equiv O_1$}& {\small $(\phi)$ touch plane}\\ 
$22$ & $1$ & $1$ & $\cdot$ & $\cdot$ & $\cdot$ & $2$ & {\small $l\equiv O_1O_2(1)$ (osculation), $l_1\equiv O_1$} & {\small $(\phi)$ touch plane}\\ 
$23$ & $\cdot$ & $\cdot$ & $\cdot$ & $\cdot$ & $1$ & $\cdot$& {\small $\omega_4$ (rational)}&\\ 
$24$ & $\cdot$ & $\cdot$ & $1$ & $\cdot$ & $1$ & $\cdot$ & {\small $\omega_4\equiv O_1^2$}&\\ 
$25$ & $\cdot$ & $1$ & $\cdot$ & $\cdot$ & $1$ & $\cdot$ & {\small $\omega_3\equiv O_1^2(1)$, $l\equiv O_1(1)$}&\\ 
$26$ & $1$ & $\cdot$ & $\cdot$ & $\cdot$ & $1$ & $\cdot$ & {\small $l_1\equiv O_1$, $l_2\equiv O_1$, $l_3\equiv O_1$, $l_4\equiv O_1$}&\\ 
$27$ & $\cdot$ & $\cdot$ & $\cdot$ & $\cdot$ & $\cdot$ & $6$ & {\small $l^2$}&\\  
\hline
\end{tabular}
\begin{center}
\text{Cubic Space Transformations of bidegree $(3,5)$}
\end{center}
\end{landscape}

\begin{landscape}
\begin{tabular}{|c|c|c|c|c|c|c|l|l|}
\hline
 & & & & & & & &\\ 
 number& D.p. of & binode & D. p.'s & pt of & pt of & ordinary & $F$-curves & Remarks\\
& contact & & & osculation & contact & pts &  & \\
 & & & & & & & &\\ 
\hline
$28$ & $\cdot$  & $\cdot$ & $\cdot$ & $\cdot$ & $\cdot$ & $3$ & {\small $l$ (contact), $l_1$} &\\ 
$29$ & $\cdot$ & $\cdot$ & $1$ & $\cdot$ & $\cdot$ & $3$ & {\small $\omega_3$ (plane, genus $1$)} &\\ 
$30$ & $\cdot$ & $\cdot$ & $1$ & $\cdot$ & $\cdot$ & $3$ & {\small $\omega_2$, $l\equiv O_1$}&\\  
$31$ & $\cdot$ & $\cdot$ & $1$ & $\cdot$ & $\cdot$ & $3$ & {\small $l\equiv O_1$ (contact), $l_1$}&\\ 
$32$ & $\cdot$ & $\cdot$ & $1$ & $\cdot$ & $\cdot$ & $3$ & {\small $l\equiv O_1$ (osculation)}& {\small $(\phi)$ touch quadric}\\ 
$33$ & $\cdot$ & $1$ & $\cdot$ & $\cdot$ & $\cdot$ & $3$ & {\small $\omega_2\equiv O_1(1)$, $l\equiv O_1$}&\\ 
$34$ & $1$ & $\cdot$ & $\cdot$ & $\cdot$ & $\cdot$ & $3$ & {\small $\omega_3\equiv O_1^2$}&\\ 
$35$ & $1$ & $\cdot$ & $\cdot$ & $\cdot$ & $\cdot$ & $3$ & {\small $l\equiv O_1$ (contact), $l_1\equiv O_1$}& {\small $(\phi)$ touch quadric}\\ 
$36$ & $\cdot$ & $\cdot$ & $2$ & $\cdot$ & $\cdot$ & $3$ & {\small $\omega_2\equiv O_1$, $l\equiv O_1O_2$}&\\  
$37$ & $\cdot$ & $1$ & $1$ & $\cdot$ & $\cdot$ & $3$ & {\small $\omega_2\equiv O_1(1)O_2$, $l\equiv O_1O_2$}&\\ 
$38$ & $\cdot$ & $1$ & $1$ & $\cdot$ & $\cdot$ & $3$ & {\small $l\equiv O_1O_2$, $l_1\equiv O_1(1)$, $l_2\equiv O_1(1)$}&\\ 
$39$ & $1$ & $\cdot$ & $1$ & $\cdot$ & $\cdot$ & $3$ & {\small $l\equiv O_1O_2$ (contact), $l_1\equiv O_1$}& {\small $(\phi)$ touch plane}\\ 
$40$ & $1$ & $1$ & $\cdot$ & $\cdot$ & $\cdot$ & $3$ & {\small $l\equiv O_1O_2(1)$ osculation}& {\small $(\phi)$ touch plane}\\ 
$41$ & $\cdot$ & $\cdot$ & $\cdot$ & $\cdot$ & $1$ & $1$ & {\small $l_1$, $l_2$, $l_3$}&\\ 
$42$ & $\cdot$ & $\cdot$ & $1$ & $\cdot$ & $1$ & $1$ & {\small $\omega_3\equiv O_1$ (rational)}&\\ 
$43$ & $\cdot$ & $\cdot$ & $1$ & $\cdot$ & $1$ & $1$ & {\small $l_1\equiv O_1$, $l_2\equiv O_1$, $l_3$}&\\ 
$44$ & $\cdot$ & $1$ & $\cdot$ & $\cdot$ & $1$ & $1$ & {\small $\omega_3\equiv O_1^2(1)$}&\\ 
$45$ & $\cdot$ & $1$ & $\cdot$ & $\cdot$ & $1$ & $1$ & {\small $\omega_2\equiv O_1(1)$, $l\equiv O_1(1)$}&\\ 
$46$ & $\cdot$ & $1$ & $\cdot$ & $\cdot$ & $1$ & $1$ & {\small $l\equiv O_1(1)$ (contact), $l_1\equiv O_1$}& {\small $(\phi)$ touch quadric}\\
$47$ & $1$ & $\cdot$ & $\cdot$ & $\cdot$ & $1$ & $1$ & {\small $l_1\equiv O_1$, $l_2\equiv O_1$, $l_3\equiv O_1$}&\\  
$48$ & $\cdot$ & $\cdot$ & $2$ & $\cdot$ & $1$ & $1$ & {\small $l\equiv O_1O_2$, $l_1\equiv O_1$, $l_2\equiv O_2$}&\\  
$49$ & $\cdot$ & $1$ & $1$ & $\cdot$ & $1$ & $1$ & {\small $l\equiv O_1(1)O_2$ (contact), $l_2\equiv O_1$}& {\small $(\phi)$ touch plane}\\ 
& & & & & & & &{\small $O_2$ on fixed plane at $O_1$}\\ 
$50$ & $\cdot$ & $\cdot$ & $3$ & $\cdot$ & $1$ & $1$ & {\small $l_1\equiv O_2O_3$, $l_2\equiv O_3O_1$, $l_3\equiv O_1O_2$}&\\ 
$51$ & $\cdot$ & $\cdot$ & $\cdot$ & $1$ & $\cdot$ & $\cdot$ & {\small $\omega_3$ (rational)}&\\ 
$52$ & $\cdot$ & $\cdot$ & $1$ & $1$ & $\cdot$ & $\cdot$ & {\small $\omega_3\equiv O_1^2$}&\\ 
\hline
\end{tabular}
\begin{center}
\text{Cubic Space Transformations of bidegree $(3,6)$}
\end{center}
\end{landscape}

\begin{landscape}
\begin{tabular}{|c|c|c|c|c|c|c|c|l|l|}
\hline
 & & & & & & & & &\\ 
 number & degrees& D.p. of & binode & D. p.'s & point of & point of & ordinary & $F$-curves & Remarks\\
 & & contact & & & osculation & contact & points & &\\ 
 & & & & & & & & &\\ 
\hline
$53$ & $3$--$7$ & $1$ & $\cdot$ & $\cdot$ & $\cdot$ & $\cdot$ & $4$  & {\small $l\equiv O_1$ (contact)} & {\small $(\phi)$ touch quadric}\\ 
$54$ & & $\cdot$ & $\cdot$ & $2$ & $\cdot$ & $\cdot$ & $4$ & {\small $l\equiv O_1O_2$, $l_1$}
& \\ 
$55$ & & $\cdot$ & $1$ & $1$ & $\cdot$ & $\cdot$ & $4$ & {\small $l\equiv O_1O_2$, $l_1\equiv O_1(1)$} &\\ 
$56$ & & $1$ & $\cdot$ & $1$ & $\cdot$ & $\cdot$ & $4$ & {\small $l\equiv O_1O_2$ (contact)} & {\small $(\phi)$ touch plane}\\ 
$57$ & & $\cdot$ & $\cdot$ & $1$ & $\cdot$ & $1$ & $2$ & {\small $\omega_2$} &\\ 
$58$ & & $\cdot$ & $1$ & $\cdot$ & $\cdot$ & $1$ & $2$ & {\small $\omega_2\equiv O_1(1)$} &\\ 
$59$ & & $1$ & $\cdot$ & $\cdot$ & $\cdot$ & $1$ & $2$ & {\small $l_1\equiv O_1$, $l_2\equiv O_1$} &\\ 
$60$ & & $\cdot$ & $\cdot$ & $1$ & $\cdot$ & $2$ & $\cdot$ & {\small $l\equiv O_1$, $l_1$} &\\ 
$61$ & & $\cdot$ & $1$ & $\cdot$ & $\cdot$ & $2$ & $\cdot$ & {\small $l_1\equiv O_1(1)$, $l_2\equiv O_1$} &\\ 
$62$ & & $\cdot$ & $1$ & $\cdot$ & $\cdot$ & $2$ & $\cdot$ & {\small $l\equiv O_1(1)$ (contact)} & {\small $(\phi)$ touch quadric}\\ 
$63$ & & $\cdot$ & $\cdot$ & $2$ & $\cdot$ & $2$ & $\cdot$ & {\small $l\equiv O_1O_2$, $l_1\equiv O_1$} &\\ 
$64$ & & $\cdot$ & $1$ & $1$ & $\cdot$ & $2$ & $\cdot$ & {\small $l_1\equiv O_1(1)O_2$ (contact)} & {\small $(\phi)$ touch plane}\\ 
 & & & & & & & & &{\small $O_2$ on fixed plane at $O_1$}\\ 
$65$ & & $\cdot$ & $\cdot$ & $1$ & $1$ & $\cdot$ & $1$ & {\small $\omega_2\equiv O_1$} &\\ 
$66$ & & $\cdot$ & $1$ & $\cdot$ & $1$ & $\cdot$ & $1$ & {\small $l_1\equiv O_1(1)$, $l_2\equiv O_1(1)$} &\\ 
\hline
$67$ & $3$--$8$ & $\cdot$ & $1$ & $1$ & $\cdot$ & $\cdot$ & $5$  & {\small $l\equiv O_1O_2$} &\\ 
$68$ & & $1$ & $\cdot$ & $\cdot$ & $\cdot$ & $1$ & $3$  & {\small $l\equiv O_1$} &\\ 
$69$ & & $\cdot$ & $1$ & $\cdot$ & $\cdot$ & $2$ & $1$  & {\small $l\equiv O_1$} &\\ 
$70$ & & $\cdot$ & $\cdot$ & $1$ & $1$ & $\cdot$ & $2$  & {\small $l$} &\\ 
$71$ & & $\cdot$ & $1$ & $\cdot$ & $1$ & $\cdot$ & $2$  & {\small $l\equiv O_1(1)$} &\\ 
\hline
$72$ & $3$--$9$ & $1$ & $\cdot$ & $\cdot$ & $\cdot$ & $1$ & $4$ & & \\ 
$73$ & & $\cdot$ & $1$ & $\cdot$ & $\cdot$ & $3$ & $\cdot$  & & \\ 
$74$ & & $\cdot$ & $1$ & $\cdot$ & $1$ & $\cdot$ & $3$  & & \\ 
$75$ & & $\cdot$ & $\cdot$ & $1$ & $1$ & $\cdot$ & $2$  & & {\small $4$-point contact at $O_2$}\\ 
\hline
\end{tabular}
\end{landscape}

\medskip

\bibliographystyle{plain}
\bibliography{biblio}
\nocite{}

\end{document}